 \numberwithin{equation}{section}
\newtheorem{thm}{Theorem}[section]
\newtheorem{lem}[thm]{Lemma}{\rm}
\newtheorem{assumption}[thm]{Assumption}{\rm}
\newtheorem{prop}[thm]{Proposition}
\newtheorem{rem}[thm]{Remark}
\newtheorem{ex}[thm]{Example}
\newtheorem{defi}[thm]{Definition}
\def\x{\mathbf{x}}
\def\v{\mathbf{v}}
\def\u{\mathbf{u}}
\def\a{\mathbf{a}}
\def\p{\mathbf{p}}
\def\o{\mathbf{\omega}}
\def\X{\mathbf{X}}
\def\Z{\mathbf{Z}}
\def\W{\mathbf{W}}
\def\P{\mathbf{P}}
\def\M{\mathbf{M}}
\def\B{\mathbf{B}}
\def\T{\mathbf{T}}
\def\y{\mathbf{y}}
\def\z{\mathbf{z}}
\def\om{\mathbf{\Omega}}
\def\R{\mathbb{R}}
\def\A{\mathbf{A}}
\def\bS{\mathbf{S}}
\def\N{\mathbb{N}}
\def\K{\mathbf{K}}
\def\os{\underline{\sigma}}
\def\ss{\overline{\sigma}}
\def\bm{\boldsymbol{m}}
\def\bsigma{\boldsymbol{\sigma}}
\title[Representation of distributionally robust chance-constraints]{Distributionally robust polynomial chance-constraints under mixture ambiguity sets}
\author{Jean B. Lasserre}
\thanks{*The work of the two authors was supported by the European Research Council (ERC) via an ERC-Advanced Grant for the \# 666981 project TAMING}
\address{LAAS-CNRS and Institute of Mathematics\\
University of Toulouse\\
LAAS, 7 avenue du Colonel Roche\\
31077 Toulouse C\'edex 4, France\\
Tel: +33561336415}
\email{lasserre@laas.fr}
\author{Tillmann Weisser}
\address{LAAS-CNRS\\
University of Toulouse\\
LAAS, 7 avenue du Colonel Roche\\
31077 Toulouse C\'edex 4, France\\
Tel: +33561336441}
\email{tweisser@laas.fr}
\date{}
\begin{document}

\begin{abstract}
Given $\X\subset\R^n$, $\varepsilon \in (0,1)$, a parametrized family of probability distributions $(\mu_{\a})_{\a\in\A}$ on $\om\subset\R^p$, 
we consider the feasible set $\X^*_\varepsilon\subset\X$ associated with  the {\em distributionally robust} chance-constraint 
\[\X^*_\varepsilon\,=\,\{\x\in\X:\:{\rm Prob}_\mu[f(\x,\o)\,>\,0]> 1-\varepsilon,\,\forall\mu\in\mathscr{M}_\a\},\]
where $\mathscr{M}_\a$ is the set of all possibles mixtures of distributions $\mu_\a$, $\a\in \A$.
For instance and typically, the family
$\mathscr{M}_\a$ is the set of all mixtures of
Gaussian distributions on $\R$ with mean and standard deviation $\a=(a,\sigma)$ in some compact set $\A\subset \R^2$.
We provide a sequence of inner approximations $\X^d_\varepsilon=\{\x\in \X:w_d(\x) <\varepsilon\}$, $d\in\N$, 
where $w_d$ is a polynomial of degree $d$ whose
vector of coefficients is an optimal solution of a semidefinite program.
The size of the latter increases with the degree $d$. We also obtain the strong 
and highly desirable asymptotic guarantee that 
$\lambda(\X^*_\varepsilon\setminus \X^d_\varepsilon)\to0$
as $d$ increases, where $\lambda$ is the Lebesgue measure on $\X$. Same results
are also obtained for the more intricated case of distributionally robust ``joint" chance-constraints.
\end{abstract}
\maketitle

\section{Introduction}

\subsection*{Motivation} In many optimization 
and control problems\deleted{the} uncertainty is often modeled by a noise $\o\in\om\subset\R^p$ (following some probability distribution $\mu$), 
which interacts with the decision variable of interest\footnote{As quoted from R. Henrion, {\em the biggest challenge  from the algorithmic and theoretical points of view arise in chance constraints where the random and decision variables cannot be decoupled.
{\tt https://www.stoprog.org/what-stochastic-programming}
}}                                                                                                                                                                                                                                                                                                                     $\x\in\X\subset\R^n$ via some feasibility
constraint of the form $f(\x,\o)>0$ for some function $f:\X\to\R$. 
In the {\em robust} approach one imposes the constraint $\x\in\X^R:=\{\x\in\X: f(\x,\o)>0,\:\forall\o\in\om\}$ on the decision variable $\x$.
However, sometimes the resulting set $\X^R$ of robust decisions can be quite small or even empty.

On the other hand, 
if one knows the probability distribution $\mu$ of the noise $\o\in\om$, then a 
more appealing {\em probabilistic} approach is to tolerate a violation of the feasibility constraint 
$f(\x,\o)>0$, provided that
this violation occurs with small probability $\varepsilon>0$, fixed {\em \`a priori}. That is, one imposes the less conservative
chance-constraint ${\rm Prob}_\mu(f(\x,\o)\,>\,0)\,>\,1-\varepsilon$, which results
in the larger ``feasible set" of decisions
\begin{equation}
\label{X-mu}
\X^\mu_\varepsilon:=\{\x\in\X: {\rm Prob}_\mu(f(\x,\o)\,>\,0)\,>\,1-\varepsilon\,\}.\end{equation}
In both the robust and probabilistic cases, handling $\X^R$ or $\X^\mu_\varepsilon$ can be quite challenging and one is interested 
in respective approximations that are easier to handle.  
There is a rich literature on chance-constrained programming since Charnes and Cooper \cite{charnes}, Miller \cite{miller}, and
the interested reader is referred to Henrion \cite{henrion,henrion2}, Dabbene \cite{calafiore1}, Li et al. \cite{li}, 
Pr\'ekopa \cite{prekopa} and Shapiro \cite{shapiro} for a general overview
of chance constraints in optimization and control. Of particular interest are uncertainty models and methods that
allow to define tractable approximations of $\X^\mu_\varepsilon$. 
Therefore an important issue is to analyze under which conditions on $f$, $\mu$ and the threshold $\varepsilon$,
the resulting chance constraint (\ref{X-mu}) defines a convex set $\X^\mu_\varepsilon$; see e.g. 
Henrion and Strugarek \cite{henrion2},
van Ackooij \cite{convexity}, Nemirovski and Shapiro \cite{nemirovski}, 
Wang et al. \cite{wang}, and the recent work of  van Ackooij and Malick \cite{convexity-2}.
For instance, in \cite{convexity-2} the authors consider joint chance-constraint ($f(\x,\o)\in\R^k$) and 
show that $\X^\mu_\varepsilon$ is convex for sufficiently small $\varepsilon$
if $\o$ is an elliptical random vector and $f$ is convex in $\x$.
A different approach to model chance constraints was considered by the first author in \cite{lass-ieee}. It uses 
the general \replaced{Moment-Sums-of-Squares (Moment-SOS)}{Moment-SOS} methodology described in \cite{book1}. Related earlier work by
Jasour et al. \cite{lagoa1} have also used this Moment-SOS approach to solve
some control problems with probabilistic constraints.

However, one well-sounded critic to these probabilistic approaches is that it relies on the knowledge 
of the exact distribution $\mu$ of the noise $\o$, which in many cases may not be  a realistic assumption.
Therefore modeling the uncertainty via a single {\em known} probability distribution $\mu$
is questionable, and may render the chance-constraint 
${\rm Prob}_\mu(f(\x,\o)>0)\,>\,1-\varepsilon$ in (\ref{chance3}) counter-productive.
It would rather make sense to assume a {\em partial knowledge} 
on the {\em unknown} distribution $\mu$ of the noise $\o\in\om$. 

To overcome this drawback, {\em distributionally robust} chance-constrained problems consider probabilistic constraints that must hold for
a whole family of distributions and typically the family is characterized by the support and first and second-order moments;
see for instance Delage and Ye \cite{delage}, Edogan and Iyengar \cite{erdogan}, and Zymler et al. \cite{zymler}.
For instance Calafiore and El Ghaoui \cite{calafiore2} have shown
that when $f$ is bilinear then a tractable characterization via second-order cone constraints is possible.
Recently Yang and Xu \cite{yang} have considered non linear optimization problems 
where the constraint functions are concave in the decision variables
and quasi-convex in the uncertain parameters. They show that such problems are tractable if the 
uncertainty is characterized by its mean and variance only; in the same spirit see also 
Chao Duan et al. \cite{opf}, Chen et al. \cite{chen}, Hanasusanto et al. \cite{hanasusanto,hanasusanto2},
Tong et al. \cite{jogo}, Wang et al. \cite{wang}, Xie and Ahmed \cite{xie,xie2} and Zhang et al. \cite{zhang} for other tractable formulations of distributionally robust chance-constrained for optimal power flow problems.

\subsection*{The uncertainty framework} We also consider a framework where only partial knowledge of the uncertainty
is available. But instead of assuming knowledge of some moments like in e.g. \cite{yang},  
we assume that typically
the distribution $\mu$ can be {\em any mixture} of probability distributions $\mu_\a\in\mathscr{P}(\om)$
for some family $\{\mu_\a\}_{\a\in\A}\subset\mathscr{P}(\om)$ that depend on
a parameter vector $\a\in\A\subset\R^t$.
That is: 
\[\mu(B)\,=\,\int_\A \mu_\a(B)\,d\varphi(\a),\qquad \forall\,B\in\mathcal{B}(\om),\]
where $\varphi\in\mathscr{P}(\A)$ can be any probability distribution on $\A$. 
If $d\mu_\a(\o)=u(\o,\a)$ for some density $u$, then
by Fubini-Tonelli's Theorem \cite[Theorem p. 85]{doob}, the above measure $\mu$ is well-defined. For instance, for Value-at-Risk optimization (where $f$ is bilinear) El Ghaoui et al. \cite{el-ghaoui} 
suggest mixtures of Gaussian measures, see \cite[\S 1.3]{el-ghaoui}, and more generally,
families of probability measures with convex uncertainty sets on first and second-order moments\footnote{This more general uncertainty framework can also be analyzed 
with our approach (hence with arbitrary polynomial $f$).}.

Notice that in this framework {\em no} mean, variance or higher order moments have to be estimated.
Hence it can be viewed as an alternative and/or a complement to 
those considered in e.g. \cite{delage,calafiore2,erdogan,yang} when a good estimation
of such moments is not possible. Indeed in many cases,
providing a box (where the mean vector can lie)
and a possible range $\underline{\delta}\,\mathbf{I}\preceq \Sigma\preceq\overline{\delta}\,\mathbf{I}$ 
for the covariance matrix $\Sigma$, can be  more realistic than 
providing a single  mean vector and a single covariance matrix. Below are some examples of possible mixtures. In particular it should be noted that mixtures of Gaussian measures (Ex. \ref{ex-gauss}) are used in statistics precisely because of their high modeling power. Indeed they can approximate quite well a large family of distributions of interest in applications; see e.g. Dizioa et al. \cite{dizio},
Marron and Wand \cite{marron}, Wang et al. \cite{wang2}, and the recent survey of Xu \cite{survey}. Similarly, the family of measures on $\om$ 
with SOS-densities (Ex. \ref{mix-sos}) discussed in de Klerk et al. \cite{deklerk}  
also have a great modeling power with nice properties.

\begin{ex}[Mixtures of Gaussian's]
\label{ex-gauss}
$\om=\R$, $\a=(a,\sigma)\in \A:=[\underline{a},\overline{a}]\times [\os,\ss]\subset\R^2$, with 
$\underline{\sigma}>0$, and 
\[d\mu_\a(\o)\,=\,\frac{1}{\sqrt{2\pi}\sigma}\exp(-\frac{(\o-a)^2}{2\sigma^2})\,d\o,\]
that is $\mu$ is a mixture of Gaussian probability measures with mean-deviation couple 
$(a,\sigma)\in\A$.
\end{ex}
\begin{ex}[Mixtures of Exponential's]
\label{ex-expo}
$\om=\R_+$, $\a=a\in \A:=[\underline{a},\overline{a}]\subset\R$ with $\underline{a}>0$, and 
\[d\mu_\a(\o)\,=\,\frac{1}{a}\exp(-\o/a)\,d\o,\]
that is, $\mu$ is a mixture of exponential probability measures with parameter $1/a$, $a\in\A$.
\end{ex}
\begin{ex}[Mixtures of elliptical's]
\label{ex-elliptical}
In \cite{convexity-2} the authors have considered chance-constraints for the class of elliptical random vectors. In our framework and
in the univariate case,
$\om=\R$, $\a=(a,\sigma)\in \A:=[\underline{a},\overline{a}]\times [\os,\ss]\subset\R^2$, with 
$\underline{\sigma}>0$. Let $\theta:\R^+\to\R$ be such that $\int_0^\infty t^kd\theta(t)<\infty$ for all $k$,
and let $p=\int_\R\theta(t^2)dt$. Then:
\[d\mu_\a(\o)\,=\,\frac{1}{p\sigma}\theta\left(\frac{(\o-a)^2}{\sigma^2}\right)\,d\o,\quad\a\in\A.\]
\label{elliptical}
\end{ex}
\begin{ex}[Mixtures of Poisson's]
\label{ex-poisson}
$\om=\N$ and $\a=a\in\A:=[\underline{a},\overline{a}]\subset\R$ with $\underline{a}>0$, and 
\[\mu_\a(k)\,=\,\exp(-a)\,\frac{a^k}{k{\rm !}},\quad k=0,1,\ldots;\quad \a\in\A,\]
that is, $\mu$ is a mixture of Poisson probability measures with parameter $a\in\A$.
\end{ex}
\begin{ex}[Mixtures of Binomial's]
\label{ex-binomial}
Let $\om=\{0,1,\ldots,N\}$ and $\a=a\in\A:=[\underline{a},\overline{a}]\subset [0,1]$, and 
\[\mu_\a(k)\,=\,\binom{N}{k} a^k (1-a)^{N-k},\quad k=0,1,\ldots N.\]
\end{ex}
\begin{ex}
\label{ex-finite}
With $\om=\R$ one is given a finite family of probability measures $(\mu_i)_{i=1,\ldots,p}\subset\mathscr{P}(\om)$. Then $\a=a\in \A:=\{1,\ldots,p\}\subset\R$, and 
\[d\mu(\o)=\sum_{a=1}^p\lambda_a\,d\mu_a(\o);\quad\sum_{a\in \A} \lambda_a=1;\quad\lambda_a\geq0,\]
that is, $\mu$ is a finite convex combination of the probability measures $(\mu_\a)$.
\end{ex}
\begin{ex}[Mixtures of SOS densities]
\label{mix-sos}
Recently, de Klerk et al. \cite{deklerk} have introduced measures with SOS densities
to model some distributionally robust optimization problems because of their high modeling power.
This family also fits our framework. Let $\phi$ be a reference measure on $\om$ with known moments.
Then $\A=\Sigma[\x]_d$ and $d\mu_\a(\o)=p(\o)\,d\phi(\o)$ for some
$p\in\Sigma[\o]_d$ such that $\int_\om p\,d\phi=1$,
where $\Sigma[\o]_d$ is the space of SOS polynomials of degree at most $2d$. A measure 
$\mu_\a\in\mathscr{M}_\a$ is parametrized by the Gram matrix $\a=\X\succeq0$ of its density $p\in\Sigma[\o]_d$. For illustration purpose consider the univariate case $\om\subset\R$
and $p\in\Sigma[\o]_2$. Then 
\[p(\o)\,=\,\left[\begin{array}{c} 1 \\ \o \\\o^2\end{array}\right]^T\cdot
\left[\begin{array}{ccc} X_{11} & X_{12}& X_{13}\\
X_{12} & X_{22} & X_{23}\\
X_{13} & X_{23} & X_{33}\end{array}\right]\cdot
\left[\begin{array}{c} 1 \\ \o \\\o^2\end{array}\right]\cdot\]
and for every integer $k$, the moment $\int_\om \o^k\,d\mu_\a(\o)=
\int_\om \o^k\,p(\o)\,d\phi(\o)$
is a linear function of the 
coefficients of $p$ and hence of the parameter $\a=\X$. If $\M_\phi$ denotes the moment matrix of the reference measure $\phi$ on $\om$, then $\A=\{\X=\X^T\in\R^{3\times 3}: \X\succeq0;\:\langle \M_\phi,\X\rangle=1\}$ is a compact and convex basic semi-algebraic set.\footnote{Write the characteristic polynomial of $\X$ in the form $t\mapsto t^3-p_2(\X)\,t^2+p_1(\X)\,t-p_0(\X)$ with $p_2,p_1,p_0\in\R[\X]$. Then 
$\A=\{\X: p_i(\X)\geq0,\:i=0,1,2;\:\langle\M_\phi,\X\rangle=1\}$.} 
Additional bound constraints on moments are easily included as linear constraints on the coefficients of $p$.
\end{ex}
\deleted{In this uncertainty framework described by $\A$, one considers the ambiguity set:}

\deleted{where $\mathscr{P}(\A)$ is the set of probability measures on $\A$,
and in a {\em distributionally robust chance-constraint approach}, with $\varepsilon>0$ fixed,
one considers the set:}

\deleted{as new feasible set of decision variables. In general $\X^*_\varepsilon$ is non-convex
and can even be disconnected.  Therefore obtaining accurate approximations of $\X^*_\varepsilon$ is 
a difficult challenge. Our ultimate goal is to replace optimization problems in the general form:
}
All families $\mathscr{M}_\a$ described above share an important property
that is crucial for our purpose: Namely, {\em all} moments of $\mu_\a\in\mathscr{M}_\a$
are {\em polynomials} in the parameter $\a\in\A$. That is, for each $\beta$, $\int_{\om}\o^\beta d\mu_\a(\o)=p_\beta(\a)$
for some polynomial $p_\beta\in\R[\a]$.

\begin{rem}
\label{mu-sigma}
Another possible and related ambiguity set is to consider the family of measures $\mu$ on $\om$ whose
first and second-order moments $\a=(\bm,\bsigma)$ belong to some prescribed set $\A$.
The resulting ambiguity set has been already used in several contributions like e.g.
\cite{chen,el-ghaoui,hanasusanto,hanasusanto2,zymler}; however,
as discussed in \cite{deklerk}, the resulting ambiguity set might be too overly conservative 
(even in the case where $\A$ is the singleton $\{(\bm,\bsigma)\}$).
The methodology developed in this paper also applies with some ad-hoc modification; see \S 
\ref{newsection}.
\end{rem}

In this uncertainty framework one now has to consider the set:
\begin{equation}
\label{set-M}
\mathscr{M}_\a:=\{\int_\A d\mu_\a(\o)\varphi(d\a): \varphi\in\mathscr{P}(\A)\,\},
\end{equation}
where $\mathscr{P}(\A)$ is the set of probability measures on $\A$,
and in a {\em distributionally robust chance-constraint approach}, with $\varepsilon>0$ fixed,
one considers the set:
\begin{equation}
\label{new-set}
\X^*_\varepsilon\,:=\,\{\,\x\in \X: {\rm Prob}_\mu(f(\x,\o)\,>\,0)\,>\,1-\varepsilon,\quad  \forall \mu\in\mathscr{M}_\a\,\},
\end{equation}
as new feasible set of decision variables. In general $\X^*_\varepsilon$ is non-convex
and can even be disconnected.  Therefore obtaining accurate approximations of $\X^*_\varepsilon$ is 
a difficult challenge. Our ultimate goal is to replace optimization problems in the general form:
\begin{equation}
\label{chance3}
\displaystyle\min_\x\,\{\,u(\x): \x\in C\,\cap\,\X^*_\varepsilon\deleted{,}\},
\end{equation}
(where $u$ is a polynomial) with 
\begin{equation}
\label{chance4}
\displaystyle\min_\x\,\{\,u(\x): \x\in C\,\cap\,W\,\deleted{\geq\,0\,}\},
\end{equation}
where the uncertain parameter $\o$ has disappeared from the description
of a suitable inner approximation $W$ of $\X^*_\varepsilon$, with
$W:=\{\x\in\X: w(\x)\leq0\}$ 
for some polynomial $w$.
So if $C$ is a basic semi-algebraic set\footnote{\added{A basic semi-algebraic set of $\R^n$ is of the form $\{\x\in\R^n: g_j(\x)\geq0,\:j\in J\}$ for finitely many polynomials $(g_j)_{j\in J}$.}}
then (\ref{chance4}) is a standard polynomial optimization problem.
Of course the resulting optimization problem (\ref{chance4}) may still be hard to solve because 
the set $C\cap W$ is not convex in general. 
But this may be the price to pay for avoiding a too conservative formulation
of the problem.  However, since in \deleted{the} formulation (\ref{chance4}) one has got rid of the disturbance parameter $\o$, 
one may then apply the arsenal of Non Linear Programming algorithms to 
get a local minimizer of (\ref{chance4}). If $n$ is not too large
or if some sparsity is present in problem (\ref{chance4}) one may even run 
a hierarchy of semidefinite relaxations to approximate its global optimal value; for more details
on the latter, the interested reader is referred to \cite{book1}.

\subsection*{Contribution} 
We consider approximating $\X^*_\varepsilon$
as a challenging mathematical problem and explore whether it is possible to 
solve it under minimal assumptions on $f$ and/or $\mathscr{M}_\a$. Thus the focus is more on the existence
and definition of a prototype ``algorithm"  (or approximation scheme) rather than on its ``scalability". Of course the latter issue of scalability
is important for practical applications and we hope that the present contribution will provide insights on
how to develop  more ``tractable" (but so more conservative) versions. 

\replaced{O}{So o}ur contribution is {\em not} in the line of research concerned with ``tractable approximations" of (\ref{new-set}) (e.g. under some restrictions on $f$ and/or the family $\mathscr{M}_\a$) and should  be viewed
as complementary to previously cited contributions whose focus is on \deleted{on} scalability.

\replaced{G}{So, g}iven $\varepsilon>0$ fixed, a family $\mathscr{M}_\a$ (e.g. as in Examples \ref{ex-gauss}, \ref{ex-expo}, \ref{ex-elliptical}, \ref{ex-poisson}, \ref{ex-binomial}, \ref{ex-finite}) and an arbitrary polynomial $f$,
our main contribution is to provide rigorous and accurate inner approximations 
$(\X^d_\varepsilon)_{d\in\N}\subset\X^*_\varepsilon$ of
the set $\X^*_\varepsilon$, that converge 
to $\X^*_\varepsilon$ in a precise sense as $d$ increases. 
More precisely: \\

(i) We provide a nested sequence of {\it inner} approximations of the set $\X^*_\varepsilon$ in (\ref{new-set}),
in the form:
\begin{equation}
\label{chance2}
\X^d_\varepsilon\,:=\,\{\,\x\in \X:\: w_d(\x)\,<\,\varepsilon\,\},\qquad d\in\N,
\end{equation}
where $w_d$ is a polynomial of degree at most $d$, and $\X^{d}_\varepsilon\subset\X^{d+1}_\varepsilon\subset\X^*_\varepsilon$ for every $d$.\\

(ii) We obtain the strong and highly desirable asymptotic guarantee:
\begin{equation}
\label{volume}
\lim_{d\to\infty}\,\lambda(\X^*_\varepsilon\setminus\X^d_\varepsilon)\,=\,0,
\end{equation}
where $\lambda$ is the Lebesgue measure on $\X$. To the best of our knowledge it is the first result of this kind at this level of generality. 
Importantly, the ``volume" convergence (\ref{volume}) is obtained with
{\em no} assumption of convexity on the set $\X^*_\varepsilon$ (and indeed in general $\X^*_\varepsilon$ is not convex). \:

(iii) Last but not least, the same approach is valid with same conclusions
for the more intricate case of {\em joint} chance-constraints, that is, probabilistic constraints of the form
\[{\rm Prob}_\mu( f_j(\x,\o)>0,\quad j=1,\ldots,s_f)\,>\,1-\varepsilon,\quad\forall\mu\in\mathscr{M}_\a,\]
(for some given polynomials $(f_j)\subset\R[\x,\o]$). Remarkably, such constraints which are notoriously difficult to handle in general, are relatively easy to incorporate in our formulation.

We emphasize that our approach
is a non trivial extension 
of the numerical scheme proposed in
\cite{lass-ieee} for approximating $\X^*_\varepsilon$ when $\mathscr{M}_\a$ is a singleton (i.e.,
for standard chance-constraints).
\\

\subsection*{Methodology}
The approach that we propose for determining the set $\X^d_\varepsilon$
defined in (\ref{chance2}) is very similar in spirit to that in \cite{sirev} and \cite{lagoa1}, and a non trivial extension
of the more recent work \cite{lass-ieee} where only a single distribution $\mu$ is considered. It is an additional illustration of the versatility of the Generalized Moment Problem (GMP)
model and the moment-SOS approach outside the field of optimization.
Indeed we also define an 
infinite-dimensional LP problem $\P$ in an appropriate space of measures and a
sequence of semidefinite relaxations $(\P_d)_{d\in\N}$ of $\P$, whose associated monotone sequence of optimal values $(\rho_d)_{d\in\N}$ converges to the optimal value $\rho$ of $\P$. An optimal solution of this LP is a measure $\phi^*$ on $\X\times\om$. In
its disintegration $\hat{\phi^*}(d\o\vert\x)\,d\x$, the conditional probability $\hat{\phi^*}(d\o\vert\x)$ is a measure
$\mu_{\a(\x)}$, for some $\a(\x)\in\A$, which identifies the worst-case distribution at $\x\in\X$.

At an optimal solution of the dual 
of the semidefinite relaxation $(\P_d)$, we obtain a polynomial $w_d$ of degree $2d$ whose sub-level set
$\{\x\in\X:w_d(\x)<0\}$ is precisely the desired approximation $\X^d_\varepsilon$ of $\X^*_\varepsilon$ in (\ref{new-set}); in fact the sets
$(\X^d_\varepsilon)_{d\in\N}$ provide a sequence of {\it inner} approximations of $\X^*_\varepsilon$.

A\deleted{t last but not least and a}s in \cite{lass-ieee}, 
the support $\om$ of $\mu\in\mathscr{M}_\a$ and the set $\{(\x,\o): f(\x,\o)>0\}$ are {\em not} required to be compact, which includes the important case where $\mu$ can be a mixture of normal or exponential distributions.

As already mentioned, our methodology is {\em not} a straightforward extension of the work in \cite{lass-ieee} where
$\mathscr{M}_\a$ is a singleton and \replaced{the moments of this distribution are assumed to be known}{$\mu$ can be arbitrary provided that its moments are known}. Indeed in the present framework and in contrast to the singleton case treated in \cite{lass-ieee}, we do {\em not} know a sequence of moments because we do not know 
the exact distribution $\mu$ of the noise $\o$. 
For instance, some measurability issues (e.g. existence of a measurable selector) not present in \cite{lass-ieee}, arise. Also and in contrast to \cite{lass-ieee},
we cannot define outer approximations by passing to the complement of $\X^*_\varepsilon$.

Importantly, we also describe how to accelerate the convergence of our approximation scheme. It consists 
of adding additional constraints in our relaxation scheme, satisfied at every feasible solution
of the infinite-dimensional LP. These additional constraints come from a specific application of Stokes' theorem in the spirit of its earlier application in \cite{lass-ieee} but more intricate and not \added{as} a direct extension. Indeed, in the framework of our infinite-dimensional LP, it \added{is} required to define a measure with support on $\X\times\om\times\A$
(instead of $\X\times\om$ in \cite{lass-ieee}) which when passing to relaxations of the LP,
results in semidefinite programs of larger size (hence more difficult to solve). However,
this price to pay can be profitable because the resulting convergence is expected to be significantly faster (as experienced in other contexts).

\subsection*{Pros and cons}

On a positive side, our approach solves a difficult and challenging mathematical problem as
it provides 
a nested hierarchy of inner approximations $(\X^d_\varepsilon)_{d\in\N}$ of $\X^*_\varepsilon$ which converges
to $\X^*_\varepsilon$ as $d$ increases, a highly desirable feature. Also for every (and especially small) $d$, whenever not empty
the set $\X^d_\varepsilon$ is a valid (perhaps very conservative if $d$ is small) inner approximation of $\X^*_\varepsilon$ 
which can be exploited in applications if needed.

On a negative side, this methodology is computationally expensive, especially to obtain a very good inner approximation $\X^d_\varepsilon$
of $\X^*_\varepsilon$.  Therefore and so far, for accurate approximations
this approach is limited to relatively small size problems.
But again, recall that this approximation problem is a difficult challenge and at least, our approach
with strong asymptotic guarantees provides insights and indications on possible routes to follow
if one wishes to  scale the method to address larger size problems.
For instance, an interesting issue not discussed here is to investigate whether sparsity patterns already exploited in polynomial optimization (e.g. as in Waki et al. \cite{waki}) can be exploited in this context.

\section{Notation, definitions and preliminaries}
\label{notation}
\subsection{Notation and definitions}
Let $\R[\x]$ be the ring of polynomials in the variables $\x=(x_1,\ldots,x_n)$ and 
$\R[\x]_d$ be the vector space of polynomials of degree at most $d$
whose dimension is $s(d):=\binom{n+d}{n}$.
For every $d\in\N$, let  $\N^n_d:=\{\alpha\in\N^n:\vert\alpha\vert \,(=\sum_i\alpha_i)\leq d\}$, 
and
let $\v_d(\x)=(\x^\alpha)$, $\alpha\in\N^n_d$, be the vector of monomials, \replaced{i.e.,}{of} the canonical basis 
\replaced{$(\x^\alpha)_{|\alpha|\leq d}$}{$(\x^\alpha)$} of $\R[\x]_{d}$. 
A polynomial $p\in\R[\x]_d$ \replaced{can be}{is} written
\[\x\mapsto p(\x)\,=\,\sum_{\alpha\in\N^n}p_\alpha\,\x^\alpha\,=\,\langle \p,\v_d(\x)\rangle,\]
for some vector of coefficients $\p=(p_\alpha)\in\R^{s(d)}$. For a real symmetric matrix $\A$ the notation $\A\succeq0$ (resp. $\A\succ0$) stands for $\A$ is positive semidefinite (psd) (resp. positive definite (pd)). 
Denote by $\Sigma[\x]_d$
the convex cone of polynomials that are sums-of-squares (SOS) of degree at most $2d$, i.e.,
\[p\in\Sigma[\x]_d\quad\Longleftrightarrow\quad p(\x)\,=\,\sum_k p_k(\x)^2,\quad\forall \x\in\R^n,\]
for finitely many polynomials $p_k\in\R[\x]_d$. The convex cone 
$\Sigma[\x]_d$ is semidefinite representable. Indeed: $p\in\Sigma[\x]_d$ if and only if there exists a real symmetric metric $\X\succeq0$ of size $s(d)$ such that $p_\alpha=\langle\X,\B_\alpha\rangle$ for all $\alpha\in\N^n$, and where the symmetric matrices $(\B_\alpha)_{\alpha\in\N^n_{2d}}$ come from writing:
\[\v_d(\x)\,\v_d(\x)^T\,=\,\sum_{\alpha\in\N^n_{2d}}\x^\alpha\,\B_\alpha.\]
For more details the interested reader is referred to \cite{book1}.

Given a closed set $\mathcal{X}\subset\R^\ell$, denote by 
$\mathcal{B}(\mathcal{X})$ the Borel $\sigma$-field of $\mathcal{X}$,
$\mathscr{P}(\mathcal{X})$ the space of probability measures on $\mathcal{X}$
and by $\mathscr{B}(\mathcal{X})$ 
the space of bounded measurable 
functions on $\mathcal{X}$. We also denote by $\mathscr{M}(\mathcal{X})$ the space of finite 
signed Borel measures on $\mathcal{X}$ and by
$\mathscr{M}_+(\mathcal{X})$ its subset of finite (positive) measures 
on $\mathcal{X}$.\\

\noindent
{\bf Moment matrix.} Given a sequence $\y=(y_\alpha)_{\alpha\in\N^n}$, let $L_\y:\R[\x]\to\R$ be the linear (Riesz) functional
\[f\:(=\sum_\alpha f_\alpha\, \x^\alpha)\:\mapsto L_\y(f)\,:=\,\sum_\alpha f_\alpha\,y_\alpha.\]
Given $\y$ and $d\in\N$, the {\it moment} matrix associated with $\y$, is the 
real symmetric  $s(d)\times s(d)$ matrix $\M_d(\y)$ with rows and columns indexed in $\N^n_d$ and with entries 
\[\M_d(\y)(\alpha,\beta)\,:=\,L_\y(\x^{\alpha+\beta})\,=\,y_{\alpha+\beta},\quad \alpha,\beta\in\N^n_d.\]
Equivalently $\M_d(\y)=L_\y(\v_d(\x)\v_d(\x)^T)$ where $L_\y$ is applied entrywise.
\begin{ex} For illustration, consider the case $n=2$, $d=2$. Then:
	\[
	M_2(\y) = \left(\begin{array}{cccccc}
	y_{00} & y_{10} &y_{01} & y_{20} & y_{11} &y_{02}\\
	y_{10} & y_{20} &y_{11} & y_{30} & y_{21} &y_{12}\\
	y_{01} & y_{11} &y_{02} & y_{21} & y_{12} &y_{03}\\
	y_{20} & y_{30} &y_{21} & y_{40} & y_{31} &y_{22}\\
	y_{11} & y_{21} &y_{12} & y_{31} & y_{22} &y_{13}\\
	y_{02} & y_{12} &y_{03} & y_{22} & y_{13} &y_{04}\\
	\end{array}\right).
	\]
\end{ex}

A sequence $\y=(y_\alpha)_{\alpha\in\N^n}$ has a {\em representing measure} $\mu$ if $y_\alpha=\int \x^\alpha\,d\mu$ 
for all $\alpha\in\N^n$; if $\mu$ is unique then $\mu$ \added{is} said to be {\em moment determinate}.

A necessary condition for the existence of such a $\mu$ is that $\M_d(\y)\succeq0$ for all $d$.
\replaced{Except for in the univariate case $n=1$, this is only a necessary condition.}{But it is only a necessary condition (except in the univariate case $n=1$).} 
However the following sufficient condition in \cite[Theorem 3.13]{book1} is very useful:
\begin{lem}(\cite{book1})
\label{lem-carleman}
If $\y=(y_\alpha)_{\alpha\in\N^n}$ satisfies $\M_d(\y)\succeq0$ for all $d=0,1,\ldots$ and
\begin{equation}
\label{carleman}
\sum_{k=1}^\infty L_\y(x_i^{2k})^{-1/2k}\,=\,+\infty,\qquad i=1,\ldots,n,
\end{equation}
then $\y$ has a representing measure, and in addition $\mu$ is moment determinate.
\end{lem}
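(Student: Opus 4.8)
The plan is to realize $\y$ as the moment sequence of a positive measure via the spectral theorem for a commuting family of self-adjoint operators, with the Carleman condition (\ref{carleman}) supplying essential self-adjointness, which simultaneously yields existence and uniqueness of the representing measure. First I would use $\M_d(\y)\succeq0$ for all $d$ to deduce $L_\y(p^2)\geq0$ for every $p\in\R[\x]$, so that $\langle p,q\rangle:=L_\y(p\,q)$ is a positive semidefinite bilinear form on $\R[\x]$. Passing to the quotient by its kernel $N=\{p:L_\y(p^2)=0\}$ and completing produces a Hilbert space $H$ in which the class of the constant $1$ is a cyclic vector.

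Next I would introduce, for each $i=1,\ldots,n$, the multiplication operator $A_i:p\mapsto x_i\,p$. The symmetry identity $L_\y((x_i\,p)\,q)=L_\y(p\,(x_i\,q))$ shows that $A_i$ descends to a densely defined symmetric operator on $H$ with the image of $\R[\x]$ as a common invariant core, and the $A_i$ pairwise commute on that core. The decisive step is to prove that (\ref{carleman}) forces each $A_i$ to be essentially self-adjoint. Since the monomials form a total set in $H$ and $\|A_i^k\,\x^\beta\|^2=L_\y(x_i^{2k}\,\x^{2\beta})$, a Cauchy--Schwarz estimate in $L_\y$ bounds the growth of $\|A_i^k\,\x^\beta\|$ in terms of the moments $L_\y(x_i^{4k})$, so that divergence of $\sum_k L_\y(x_i^{2k})^{-1/2k}$ propagates to divergence of $\sum_k\|A_i^k\,\x^\beta\|^{-1/k}$. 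Each monomial is thus a quasi-analytic vector for $A_i$, and Nussbaum's criterion --- a symmetric operator with a total set of quasi-analytic vectors is essentially self-adjoint --- gives that each closure $\overline{A_i}$ is self-adjoint.

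I would then invoke the spectral theorem for commuting self-adjoint operators. Because the $A_i$ are essentially self-adjoint and commute on the common core, their closures strongly commute and admit a joint projection-valued measure $E$ on $\R^n$; setting $\mu(B):=\langle E(B)\,1,1\rangle$ yields a finite positive Borel measure satisfying $\int\x^\alpha\,d\mu=y_\alpha$ for all $\alpha$, so $\mu$ represents $\y$. Determinacy then follows from uniqueness of the self-adjoint extension: any representing measure would furnish a self-adjoint extension of the symmetric family, and essential self-adjointness forces $E$, hence $\mu$, to be unique.

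The step I expect to be the main obstacle is the passage from operators that merely commute on the polynomial core to \emph{strongly} commuting self-adjoint operators with commuting spectral projections --- a passage that genuinely fails for generic multivariate moment sequences and is what makes the multivariate problem harder than the univariate one. It is precisely essential self-adjointness, delivered by the quasi-analyticity estimate above, that rescues the argument; accordingly the real technical work lies in converting the divergence of $\sum_k L_\y(x_i^{2k})^{-1/2k}$ into quasi-analyticity of the monomials via the Denjoy--Carleman theorem, rather than in the otherwise routine GNS and spectral-theoretic bookkeeping.
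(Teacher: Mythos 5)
The paper does not prove this lemma at all: it is quoted verbatim from \cite{book1} (Theorem 3.13 there) and attributed to Nussbaum, so there is nothing in the paper to compare your argument against line by line. What you have written is the standard operator-theoretic proof of Nussbaum's theorem, and the overall architecture is sound: the GNS construction from $L_\y(p^2)\geq 0$, the symmetric multiplication operators $A_i$ on the polynomial core (noting that $x_i N\subseteq N$ follows from Cauchy--Schwarz so they descend to the quotient), the reduction of quasi-analyticity of the monomials to the divergence of $\sum_k L_\y(x_i^{4k})^{-1/4k}$ --- which requires the log-convexity of $k\mapsto L_\y(x_i^{2k})$ coming from $\M_d(\y)\succeq 0$ to pass from the full Carleman sum to the even-index subsequence --- and finally the joint spectral measure evaluated at the cyclic vector. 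The one place where your narrative is misleading is the claim that \emph{essential self-adjointness} is what rescues the passage to strongly commuting closures: by Nelson's classical counterexample, two operators can each be essentially self-adjoint, commute on a common invariant core, and still fail to commute strongly. What actually delivers strong commutativity (and hence the joint projection-valued measure) is Nussbaum's multivariate theorem, whose hypothesis is the existence of a \emph{total set of vectors that are simultaneously quasi-analytic for every} $A_i$; your estimate on the monomials does establish exactly this hypothesis, so the proof goes through, but the mechanism should be credited to the joint quasi-analyticity rather than to essential self-adjointness of the individual operators. With that attribution corrected, and the determinacy argument fleshed out (uniqueness follows because any representing measure must again satisfy Carleman's condition coordinatewise, forcing density of polynomials in the corresponding $L^2$ space), your proposal is a correct and complete route to the cited result.
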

Condition (\ref{carleman}) due to Nussbaum is the multivariate generalization of
its earlier univariate version due to  Carleman; see e.g. \cite{book1}.\\

\noindent
{\bf Localizing matrix.} Given a sequence $\y=(y_\alpha)_{\alpha\in\N^n}$, and a polynomial $g\in\R[\x]$, the {\it localizing} moment matrix 
associated with $\y$ and $g$, is the 
real symmetric  $s(d)\times s(d)$ matrix $\M_d(g\,\y)$ with rows and columns indexed in $\N^n_d$ and with entries 
\begin{eqnarray*}
\M_d(g\,\y)(\alpha,\beta)&:=&L_\y(g(\x)\,\x^{\alpha+\beta})\\
&=&\sum_\gamma g_\gamma\,y_{\alpha+\beta+\gamma},\quad \alpha,\beta\in\N^n_d.
\end{eqnarray*}
Equivalently $\M_d(g\,\y)=L_\y(g(\x)\,\v_d(\x)\v_d(\x)^T)$ where $L_\y$ is applied entrywise.
\begin{ex} For illustration, consider the case $n=2$, $d=1$. Then the localization matrix associated 
with $\y$ and $g=x_1-x_2$, is:
	\[
	M_1(g\y) = \left(\begin{array}{ccc}
	y_{10}-y_{01} & y_{20}-y_{11}  & y_{11}-y_{02}\\
	y_{20}-y_{11} & y_{20}-y_{21}  & y_{21}-y_{12}\\
	y_{11}-y_{02} & y_{21}-y_{12}  & y_{12}-y_{02}\\
	\end{array}\right).
	\]
\end{ex}

\subsection*{Disintegration} Given a probability measure $\mu$ on a cartesian product $\mathcal{X}\times\mathcal{Y}$ of topological spaces, we may decompose $\mu$ into its marginal $\mu_\x$ on $\mathcal{X}$ and a stochastic kernel (or conditional probability measure) $\hat{\mu}(d\y\vert\cdot)$
on $\mathcal{Y}$ given $\mathcal{X}$, that is:
\begin{itemize}
	\item For every $\x\in\mathcal{X}$, $\hat{\mu}(d\y\vert\x)\in\mathscr{P}(\mathcal{Y})$, and
	\item For every $B\in\mathcal{B}(\mathcal{Y})$, the function $\x\mapsto 
	\hat{\mu}(B\vert\x)$ is measurable.
\end{itemize}

Then
\[\mu(A\times B)\,=\,\int_{A} \hat{\mu}(B\vert\x)\,\mu_\x(d\x),\quad\forall A\in\mathcal{B}(\mathcal{X}),
B\in\mathcal{B}(\mathcal{Y}).\]

\subsection{The family $\mathscr{M}_\a$}

Let $\om\subset\R^p$ be the ``noise" (or disturbance) space. Let $\A\subset\R^t$ be a compact set and 
for every $\a\in\A$, let $\mu_\a\in\mathscr{P}(\om)$.

\begin{assumption}
 \label{ass-on-Ma}
 The set $\{\mu_\a: \a\in\A\}\subset\mathscr{P}(\om)$ satisfies the 
 following:
 
\noindent
(i) For every $B\in\mathcal{B}(\om)$, the function $\a\mapsto \mu_\a(B)$
is measurable.

\noindent
(ii) For every $\beta\in\N$:
\begin{equation}
\label{def-pbeta}
\int_\om \o^\beta \,d\mu_\a(\o)\,=\,p_\beta(\a),\quad \forall\a\in\A,\end{equation}
for some polynomial $p_\beta\in\R[\a]$. 

\noindent
(iii) For every $\a\in\A$ and every polynomial $g\in\R[\o]$, $\mu_\a(\{\o: g(\o)=0\})=0$.

\noindent
(iv) For every bounded measurable (resp. bounded continuous) function $q$ on $\X\times\om$, the function
\[(\x,\a)\mapsto Q(\x,\a)\,:=\,\int_\om q(\x,\o)\,d\mu_\a(\o),\]
is bounded measurable (resp. bounded continuous) on $\X\times\A$.
 \end{assumption}
For instance if $d\mu_\a(\o)=\theta(\a,\o)\,d\o$ for some measurable density $\theta(\a,\cdot)$ on $\om$, then Assumption \ref{ass-on-Ma}(i) follows from Fubini-Tonelli's Theorem \cite{doob}, \added{moreover} Assumption \ref{ass-on-Ma}(iii) is also satisfied. Assumption \ref{ass-on-Ma}(ii) is \deleted{also} satisfied
in all Example\added{s} \ref{ex-gauss}-\ref{mix-sos}, as well as in
their multivariate extensions.
Assumption \ref{ass-on-Ma}(iv) is \deleted{also} satisfied in Example \ref{ex-gauss}, \ref{ex-expo}, 
\ref{elliptical}, \ref{ex-finite}, \ref{mix-sos}, and their natural multivariate extensions. For instance, in Example \ref{ex-gauss}
 where $\mathscr{M}_\a$ is the set of all possible mixtures of univariate Gaussian probability distributions  with mean and \replaced{standard}{standrad} deviation $(a,\sigma)\in\A$,
 the function
 \[Q(\x,\a):=\int_\om q(\x,\o)\,d\mu_\a\,=\,\frac{1}{\sqrt{2\pi}}\,\int_\om q(\x,(\sigma\,\o+a))\,\exp(-\o^2/2)\,d\o,\]
 is bounded measurable (resp. continuous) in $\a\in\A$ whenever $q$ is bounded measurable 
 (resp. continuous) on $\X\times\om$. 
 
 If the disturbance space $\om$ is non-compact, we need in addition:
 
 \begin{assumption}
 \label{ass-on-Ma-2}
 (If $\om\subset\R^p$ is unbounded):
 
\noindent
 There exists $c,\gamma>0$ such that for every $i=1,\ldots,p$:
 \begin{equation}
 \label{suff-carleman}
 \sup_{\a\in\A}\int_\om \exp(c\,\vert \o_i\vert)\,d\mu_\a(\o)\,<\,\gamma.\end{equation}
 \end{assumption}
  
\replaced{Note that this}{This} assumption is \deleted{also} satisfied in Example \ref{ex-gauss}, \ref{ex-expo}, \ref{elliptical}, \ref{ex-finite}, \ref{mix-sos}, and their natural multivariate extensions.

\begin{defi}\added{The set}
$\mathscr{M}_\a\subset\mathscr{P}(\om)$ is the space of all possible mixtures of probability measures $\mu_\a$, $\a\in\A$. That is, $\mu\in\mathscr{M}_\a$ if and only if there exists $\varphi\in\mathscr{P}(\A)$ such that
 \begin{equation}
\label{def-Ma}
\mu(B)\,=\,\int_\A \mu_\a(B)\,d\varphi(\a),\quad \forall B\in\mathcal{B}(\om).
\end{equation}
(By Assumption \ref{ass-on-Ma}(i), $\mu$ is well-defined.) In particular $\mu_\a\in\mathscr{M}_\a$ for all $\a\in\A$.
\end{defi}

 Let $\X\subset\R^n$ and $\A\subset\R^t$ be \deleted{a} compact basic semi-algebraic sets and let $\lambda$ be the Lebesgue measure on $\X$, scaled to be a probability measure on $\X$. We assume that $\X$ is simple enough so that all moments of $\lambda$ are easily calculated or available in closed-form. Typically $\X$ is a box, a simplex, an ellipsoid, etc. 
 The set $\om\subset\R^p$ is also a basic semi-algebraic set not necessarily compact (for instance it can
 be $\R^p$ or the positive orthant $\R^p_+$).

\begin{defi}
\label{def-extension}
Given a measure $\psi$ on $\X\times\A$ define
$\psi'$ on $\X\times\A\times\om$ by:
\[\psi'(B,C,D)\,=\,\int_{B\times C}\mu_\a(D)\,d\psi(\x,\a),\quad 
B\in\mathcal{B}(\X), \:C\in\mathcal{B}(\A),\:D\in\mathcal{B}(\om),\]
which is well defined by Assumption \ref{ass-on-Ma}(i). 
Its marginal $\psi'_{\x,\a}$ on $\X\times\A$ is $\psi$ and $\hat{\psi}'(\cdot\vert\x,\a)=\mu_\a$, for all $(\x,\a)\in\X\times\A$.
\end{defi}

Recall that $\mathscr{B}(\X\times \om)$ is the space of bounded 
measurable  functions on $\X\times\om$.
Define the linear mapping $\T:\mathscr{B}(\X\times \om)\to\mathscr{B}(\X\times \A)$ by:
\begin{equation}
\label{def-T}
g\mapsto \T g(\x,\a)\,:=\,\int_\om g(\x,\o)\,d\mu_\a(\o),\quad \a\in\A,\end{equation}
which is well-defined by Assumption \ref{ass-on-Ma}(iv). 
Therefore one may define  the {\em adjoint} linear mapping $\T^*:\mathscr{M}(\X\times\A)\to
 \mathscr{M}(\X\times\om)$ by:
 \begin{eqnarray*}
 \langle g,\T^*\psi\rangle &&\left(\,=\,\int_{\X\times\om}g(\x,\o)\,d\T^*\psi(\x,\o)\right)\,=\, \langle \T g,\psi\rangle\\
 &&\left(\,=\,\int_{\X\times\A}\left(\int_\om g(\x,\o)\,d\mu_\a(\o)\right)d\psi(\x,\a)\right),
 \end{eqnarray*}
 for all $g\in\mathscr{B}(\X\times \om)$ and all $\psi\in\mathscr{M}(\X\times \A)$.
 
\begin{lem}
\label{def-tstar}
Let $\T$ be as \added{in} (\ref{def-T}). Then for every $\psi\in\mathscr{M}_+(\X\times\A)$, $\T^*\psi=\psi'_{\x,\o}$ where $\psi'_{\x,\o}$ is the marginal on $\X\times\om$
of the measure $\psi'\in\mathscr{M}_+(\X\times\A\times\om)$ in Definition \ref{def-extension}. \end{lem}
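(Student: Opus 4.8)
The plan is to exploit the fact that the adjoint $\T^*\psi$ is \emph{uniquely} determined by the duality identity $\langle g,\T^*\psi\rangle=\langle \T g,\psi\rangle$ for all $g\in\mathscr{B}(\X\times\om)$: two finite signed measures on $\X\times\om$ that integrate every bounded measurable function to the same value must coincide. Hence it suffices to verify that the candidate measure $\psi'_{\x,\o}$ satisfies this very identity, and uniqueness of the adjoint then forces $\T^*\psi=\psi'_{\x,\o}$. So the entire proof reduces to checking one integral identity.

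First I would establish the disintegration formula for the measure $\psi'$ of Definition \ref{def-extension}: for every bounded measurable $h$ on $\X\times\A\times\om$,
\[\int_{\X\times\A\times\om} h(\x,\a,\o)\,d\psi'(\x,\a,\o)\,=\,\int_{\X\times\A}\Big(\int_\om h(\x,\a,\o)\,d\mu_\a(\o)\Big)\,d\psi(\x,\a).\]
By construction this holds when $h$ is the indicator of a measurable rectangle $B\times C\times D$, since it is then precisely the defining relation $\psi'(B,C,D)=\int_{B\times C}\mu_\a(D)\,d\psi$. I would then extend it to arbitrary bounded measurable $h$ by linearity, monotone convergence, and a functional monotone class argument. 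Measurability of the inner integral in $(\x,\a)$, needed for the right-hand side to be well-defined, is guaranteed by Assumption \ref{ass-on-Ma}(i), while finiteness of all quantities is clear since each $\mu_\a$ is a probability measure and $\psi\in\mathscr{M}_+(\X\times\A)$ is finite.

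Next I would apply this disintegration to an integrand that does not depend on $\a$, namely $h(\x,\a,\o)=g(\x,\o)$ for $g\in\mathscr{B}(\X\times\om)$. On the left, integrating $g(\x,\o)$ against $\psi'$ is the same as integrating it against its marginal $\psi'_{\x,\o}$ on $\X\times\om$ (the pushforward under the projection $(\x,\a,\o)\mapsto(\x,\o)$), so the left-hand side equals $\langle g,\psi'_{\x,\o}\rangle$. On the right, the inner integral is exactly $\T g(\x,\a)=\int_\om g(\x,\o)\,d\mu_\a(\o)$, so the right-hand side equals $\langle \T g,\psi\rangle$. This yields
\[\langle g,\psi'_{\x,\o}\rangle\,=\,\langle \T g,\psi\rangle\,=\,\langle g,\T^*\psi\rangle,\qquad\forall\,g\in\mathscr{B}(\X\times\om),\]
from which the conclusion $\T^*\psi=\psi'_{\x,\o}$ follows by the uniqueness noted above.

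The only genuine subtlety, and the step I would treat with the most care, is the passage from measurable rectangles to arbitrary bounded measurable $h$ in the disintegration formula; this is where the monotone class argument and the measurability supplied by Assumption \ref{ass-on-Ma}(i) are essential, and it is what guarantees that $\psi'_{\x,\o}$ is a bona fide finite measure to which the duality identity can be applied. Everything else is a direct unwinding of the definitions of $\T$, $\T^*$, and $\psi'$.
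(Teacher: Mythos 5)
Your proposal is correct and follows essentially the same route as the paper: both verify the duality identity $\langle g,\T^*\psi\rangle=\langle g,\psi'_{\x,\o}\rangle$ for all $g\in\mathscr{B}(\X\times\om)$ by passing through the disintegration of $\psi'$ with conditional $\hat{\psi}'(\cdot\vert\x,\a)=\mu_\a$. The only difference is that you spell out the monotone-class extension from rectangles, a step the paper folds into Definition \ref{def-extension} without comment.
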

\begin{proof}
Let $g\in\mathscr{B}(\X\times\om)$ \added{be} fixed\deleted{, arbitrary}. Then:
\begin{eqnarray*}
\langle g,\T^*\psi\rangle=\langle \T g,\psi\rangle&=&
\int_{\X\times\A}\left(\int_\om g(\x,\o)\,\mu_\a(d\o)\right)d\psi(\x,\a)\\
&=&\int_{\X\times\A}\left(\int_\om g(\x,\o)\,\hat{\psi'}(d\o\vert\x,\a)\right)d\psi(\x,\a)\\
&=&
\int_{\X\times\A\times\om}g(\x,\o)\,d\psi'(\x,\a,\o)\\
&=&\int_{\X\times\om}g(\x,\o)\,d\psi'_{\x,\o}(\x,\o)\,=\,
\langle g,\psi'_{\x,\o}\rangle.
\end{eqnarray*}
As this holds for every $g\in\mathscr{B}(\X\times\om)$, it follows
that $\T^*\psi=\psi'_{\x,\o}$.
\end{proof}

\begin{lem}
\label{T-poly}
Let Assumption \ref{ass-on-Ma} hold. Then the mapping $\T$ in (\ref{def-T}) extends to polynomials. Moreover, 
$\T( \R[\x,\o])\,\subset \R[\x,\a]$ and
\begin{equation}
\label{poly}
\langle h,\T^*\psi\rangle\,=\,\langle\T h,\psi\rangle,\quad\forall h\in\R[\x,\o],\:\forall\psi\in\mathscr{M}_+(\X\times\A).
\end{equation}
\end{lem}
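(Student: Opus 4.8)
The plan is to read off the entire lemma from Assumption \ref{ass-on-Ma}(ii), which is exactly the property that turns integration of monomials in $\o$ against $\mu_\a$ into polynomials in $\a$; the only genuine work is licensing an unbounded Fubini interchange in (\ref{poly}).

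First I would extend $\T$ and prove the inclusion simultaneously. Write $h\in\R[\x,\o]$ as a finite sum $h(\x,\o)=\sum_{\alpha,\beta}h_{\alpha\beta}\,\x^\alpha\,\o^\beta$. For fixed $\x$, Assumption \ref{ass-on-Ma}(ii) guarantees that each monomial $\o\mapsto\o^\beta$ is $\mu_\a$-integrable with $\int_\om\o^\beta\,d\mu_\a(\o)=p_\beta(\a)<\infty$, so the finite linear combination $h(\x,\cdot)$ is $\mu_\a$-integrable and $\T h(\x,\a):=\int_\om h(\x,\o)\,d\mu_\a(\o)$ is well-defined. By linearity of the integral, $\T h(\x,\a)=\sum_{\alpha,\beta}h_{\alpha\beta}\,\x^\alpha\,p_\beta(\a)$, which is a polynomial in $(\x,\a)$. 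This shows at once that $\T$ extends to $\R[\x,\o]$ and that $\T(\R[\x,\o])\subset\R[\x,\a]$.

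Next I would establish (\ref{poly}). The key simplification is Lemma \ref{def-tstar}, which identifies $\T^*\psi=\psi'_{\x,\o}$, the $(\x,\o)$-marginal of the measure $\psi'$ from Definition \ref{def-extension}; this is a fixed finite measure (since $\mu_\a(D)\le 1$ and $\psi$ is finite), so the left side $\langle h,\T^*\psi\rangle=\int_{\X\times\om}h\,d\psi'_{\x,\o}$ makes sense for polynomial $h$ with no boundedness hypothesis. Because $h$ does not depend on $\a$, this equals $\int_{\X\times\A\times\om}h(\x,\o)\,d\psi'$, and the disintegration $\hat{\psi'}(d\o\vert\x,\a)=\mu_\a$ with marginal $\psi$ gives, exactly as in the proof of Lemma \ref{def-tstar}, the value $\int_{\X\times\A}\bigl(\int_\om h\,d\mu_\a\bigr)\,d\psi=\langle\T h,\psi\rangle$. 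Formally this last step is Fubini--Tonelli.

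The hard part will be justifying that unbounded Fubini step, which is the one place the proof goes beyond the bounded computation of Lemma \ref{def-tstar}. Since $\X$ and $\A$ are compact and $\psi$ is finite, it suffices to bound $\int_\om|h|\,d\mu_\a$ uniformly on $\X\times\A$. On compact $\X$ each $|\x^\alpha|\le C$, and from the elementary inequality $|\o^\beta|\le 1+\o^{2\beta}$ (using $|t|\le 1+t^2$ with $t=\o^\beta$), Assumption \ref{ass-on-Ma}(ii) applied to the even monomial $\o^{2\beta}$ gives $\int_\om|\o^\beta|\,d\mu_\a\le 1+p_{2\beta}(\a)$, a polynomial hence bounded on compact $\A$. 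Summing the finitely many terms yields $\sup_{(\x,\a)\in\X\times\A}\int_\om|h|\,d\mu_\a<\infty$, so by Tonelli $\int_{\X\times\A\times\om}|h|\,d\psi'<\infty$. Thus $h$ is $\psi'$-integrable and the interchange in the previous paragraph is valid; everything else is bookkeeping identical to Lemma \ref{def-tstar}.
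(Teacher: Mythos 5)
Your proof follows essentially the same route as the paper's: extend $\T$ to $\R[\x,\o]$ by linearity via Assumption \ref{ass-on-Ma}(ii), which immediately gives $\T h(\x,\a)=\sum_{\alpha,\beta}h_{\alpha\beta}\x^\alpha p_\beta(\a)\in\R[\x,\a]$, and then identify $\langle h,\T^*\psi\rangle$ with the integral of $h$ against $\psi'$ and interchange the order of integration. The only difference is that you explicitly license the Fubini step on non-compact $\om$ via the bound $\vert\o^\beta\vert\leq 1+\o^{2\beta}$ and compactness of $\A$, an integrability detail the paper leaves implicit; this is a refinement of the same argument, not a different one.
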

\begin{proof}
Let $h\in\R[\x,\o]$ be fixed, arbitrary and write
$h(\x,\o)=\sum_{\alpha\beta}h_{\alpha\beta}\,\x^\alpha\,\o^\beta$. 
Then by Assumption \ref{ass-on-Ma}(ii):
\[\T h(\x,\a)\,:=\,
\sum_{\alpha,\beta} 
h_{\alpha\beta}\,\x^\alpha\,\int_\om\o^\beta\,d\mu_\a\,=\,
\sum_{\alpha,\beta} 
h_{\alpha\beta}\,\x^\alpha\,p_\beta(\a)\in\R[\x,\a].\]
Hence $\langle\T h,\psi\rangle=\int_{\x\times\A}\sum_{\alpha,\beta}
h_{\alpha\beta}\x^\alpha\,p_\beta(\a)\,d\psi(\x,\a)$.
Next, recalling the definition of $\psi'$ and $\psi'_{\x,\o}$:
\begin{eqnarray*}
\int_{\X\times\A}\sum_{\alpha,\beta}h_{\alpha\beta}\,\x^\alpha\,p_\beta(\a)\,d\psi(\x,\a)&=&\int_{\X\times\A}
 \left(\int_\om h(\x,\o)\mu_\a(d\o)\right)\,d\psi(\x,\a)\\
 &=&\int_{\X\times\om\times\A} h(\x,\o)\,d\psi'(\x,\o,\a)\\
 &=&
\int_{\X\times\om} h(\x,\o)\,d\psi'_{\x,\o}(\x,\o)\\
&=&\int_{\X\times \om} h(\x,\o) \,d\T^*\psi\,=\,\langle h,\T^*\psi\rangle,
\end{eqnarray*}
which yields (\ref{poly}).
\end{proof}

\section{An ideal infinite-dimensional LP problem}
\label{lp-formulation}

Let $\X\subset\R^n$, $\om\subset\R^p$ and $\A\subset\R^t$ be basic semi-algebraic sets. The sets $\X$ and $\A$ are assumed to be compact.
The ambiguity set $\mathscr{M}_\a$ is defined in \eqref{set-M}.
Let $f\in\R[\x,\o]$ be a given polynomial and with  $\varepsilon>0$ fixed, 
consider the set $\X^*_\varepsilon$ defined in (\ref{set-M}). Let
\begin{eqnarray}
\label{set-k}
\K&:=&\{(\x,\o)\in\X\times\om: f(\x,\o)\leq0\}\\
\label{set-kx}
\K_\x&:=&\{\o\in\om: (\x,\o)\in\K\,\},\quad \x\in\X.
\end{eqnarray}

\subsection{Basic idea and link with \cite{lass-ieee}}
Suppose for the moment that $\mathscr{M}_\a$ is the singleton $\{\mu\}$.
To approximate the set $\X^*_\varepsilon$ in \eqref{new-set} from inside,
the basic idea in \cite{lass-ieee} is to consider the infinite-dimensional LP:
\begin{equation}
\label{lp-ieee}
\rho\,=\,\displaystyle\sup_{\phi\in\mathscr{M}_+(\K)}\,\{\phi(\K):\:\phi\,\leq\,\lambda\otimes\mu\,\}.
\end{equation}
A dual of \eqref{lp-ieee} is the infinite-dimensional LP:
\begin{equation}
\label{lp-dual-ieee}
\rho^*\,=\,\displaystyle\inf_{w\in\R[\x,\o]}\,\{\,\int_{\X\times\Omega}w(\x,\o)\,d(\lambda\otimes\mu):\:
\mbox{$w\geq1$ on $\K$; $w\geq0$ on $\X\times\Omega$}\}.
\end{equation}
It is proved  in \cite{lass-ieee} that \eqref{lp-ieee} has a unique optimal solution
$d\phi^*=1_\K(\x,\o)d(\mu\otimes\lambda)$ and $\rho=\rho^*=\int_\X\mu(\K_\x)\lambda(d\x)$. 
In addition, for every feasible solution $w\in\R[\x,\o]$, let $h\in\R[\x]$ be the polynomial $\x\mapsto 
\int_\om w(\x,\o)\,d\mu(\o)$. Since $w\geq0$ on $\X\times\Omega$ and $w\geq1$ on $\K$ then $h(\x)\geq\mu(\K_\x)$ for all $\x\in\X$
and therefore $\{\x: h(\x)<\varepsilon\}\subset\{\x: \mu(\K_\x)<\varepsilon\}=\X^*_\epsilon$. 
Further, \cite{lass-ieee} defines a hierarchy of semidefinite relaxations of \eqref{lp-ieee} such that optimal solutions of their associated semidefinite duals provide polynomials $h_d\in\R[\x]_{2d}$ of increasing degree $d$ \replaced{with the property}{, such} that $\lambda(\X^*_\varepsilon\setminus\{\x: h_d(\x)<\varepsilon\})\to0$ a $d\to\infty$. This was possible because {\em one knows exactly and in closed form} all moments 
\deleted{$(\lambda_\alpha\cdot\mu_\beta)$} of the product measure $\lambda\otimes\mu$ on $\X\times\Omega$, a crucial ingredient of the semidefinite relaxations (14) in \cite{lass-ieee}.

When $\mathscr{M}_\a$ is not the singleton $\{\mu\}$, \replaced{a similar approach as the one above}{the above approach} is quite more involved because:
\begin{itemize}
\item One does {\em not} know the exact distribution of the noise $\o$; it can be any 
mixture of $(\mu_a)_{\a\in\A}$.
\item For each $\x\in\X$ such that $\K_\x\neq\emptyset$ one needs to identify
the worst-case distribution $\mu_\x\in\mathscr{M}_\a$, i.e., $\mu_{\a(\x)}$ where
$\a(\x)=\arg\max_{\a\in\A}\mu_\a(\K_\x)$.
\end{itemize}
\subsection*{Brief informal sketch of the \added{s}trategy}
To identify the worst-case distribution $\mu_\x$, we introduce an unknown distribution $\psi$ on $\X\times\A$ with marginal $\lambda$ on $\X$. It can be disintegrated into $\hat{\psi}(d\a\vert\x)\lambda(d\x)$ and
the goal is to compute $\psi^*=\hat{\psi}^*(d\a\vert\x)\lambda(d\x)$, with 
$\hat{\psi}^*(d\a\vert\x)=\delta_{\a(\x)}$. But recall that to provide numerical approximations, somehow
we need to access the moments of the involved measures, i.e., we need access to
$\gamma_\beta(\a)=\int_\om \o^\beta\,d\mu_\a(\o)$ for all $\beta\in\N^p$, and all $\a\in\A$. Fortunately, by Assumption
\ref{ass-on-Ma}(ii), $\gamma_\beta(\a)=p_\beta(\a)$ for some known polynomial $p_\beta\in\R[\a]$.
Then given $\psi$:
\begin{eqnarray*}
\underbrace{\int \x^\alpha\,p_\beta(\a)d\psi(\x,\a)}_{\mbox{moments of $\psi$}}&=&
\int_\X\x^\alpha\,\left(\int_\A p_\beta(\a)\,\hat{\psi}(d\a\vert\x)\right)\,\lambda(d\x)\\
&=&\int_\X\x^\alpha\,\left(\int_\A \left(\int_\Omega \o^\beta\,d\mu_\a(\o)\right)\,\hat{\psi}(d\a\vert\x)\right)\,\lambda(d\x)\\
&=&\int_\X\x^\alpha\,\left(\int_\Omega \o^\beta\mu(d\o) \right)\,\lambda(d\x)
\end{eqnarray*}
where $\mu(B)=\int_\A\mu_\a(B)\hat{\psi}(d\a\vert\x)$ for all $B\in\mathcal{B}(\Omega)$, i.e., $\mu\in\mathscr{M}_\a$. 
That is, by playing with $\psi$ (more precisely its conditional $\hat{\psi}(d\a\vert\x$)) one may explore for each $\x\in\X$, all possible mixtures of distributions
$\mu_\a$, $\a\in\A$. \replaced{I}{And i}mportantly, moments of such distributions are expressed in terms of moments of $\psi$. This is exactly what we need for computing approximations 
in the spirit of \cite{lass-ieee}. The linear mapping $\T$ in \eqref{def-T} 
is the tool to link measures $\phi$ on $\X\times\Omega$ with measures $\psi$ on $\X\times\A$.

As one may see, what precedes is a non trivial extension of the approach in \cite{lass-ieee}; in addition, the above informal derivation requires some measurability conditions, which by Lemma \ref{mu-star} below are guaranteed to hold.

\begin{lem}
\label{mu-star}
For each $\x\in\X$ there exist measurable 
mappings $\x\mapsto \a(\x)\in\A$ and $\x\mapsto \kappa(\x)\in\R$ such that:
\begin{equation}
\label{mu-star-1}
\kappa(\x)\,=\,\max\{\,\mu(\K_\x):\mu\in\mathscr{M}_\a\}\,=\,\max\{\,\mu_\a(\K_\x):\a\in\A\}\,=\,\mu_{\a(\x)}(\K_\x).
\end{equation}
\end{lem}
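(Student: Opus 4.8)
The plan is to recognize (\ref{mu-star-1}) as a measurable selection statement for the reward function $u(\x,\a):=\mu_\a(\K_\x)$ and to reduce it to a \emph{measurable maximum theorem}. I would organize the argument in four steps: (a) reduce the maximization over $\mathscr{M}_\a$ to one over $\A$; (b) establish joint measurability of $u$; (c) establish continuity of $u(\x,\cdot)$ on the compact set $\A$; and (d) invoke the measurable maximum theorem to produce the measurable value function $\kappa$ together with a measurable selector $\a(\cdot)$.

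For step (a), fix $\x\in\X$. Any $\mu\in\mathscr{M}_\a$ has the form $\mu(\cdot)=\int_\A\mu_\a(\cdot)\,d\varphi(\a)$ with $\varphi\in\mathscr{P}(\A)$, whence $\mu(\K_\x)=\int_\A\mu_\a(\K_\x)\,d\varphi(\a)$; the integrand is $\varphi$-integrable since $\a\mapsto\mu_\a(\K_\x)$ is bounded by $1$ and measurable by Assumption \ref{ass-on-Ma}(i). An average cannot exceed the pointwise supremum, so $\mu(\K_\x)\le\sup_{\a\in\A}\mu_\a(\K_\x)$, while the Dirac choice $\varphi=\delta_\a$ gives $\mu=\mu_\a$; hence the two suprema agree. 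Once step (c) shows the supremum over the compact set $\A$ is attained at some $\a(\x)$, taking $\varphi=\delta_{\a(\x)}$ shows the maximum over $\mathscr{M}_\a$ is attained at $\mu_{\a(\x)}$, yielding all equalities in (\ref{mu-star-1}).

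For step (b), I note that $q(\x,\o):=\mathbf{1}_{\{f(\x,\o)\le0\}}$ is bounded and measurable on $\X\times\om$ (as $f$ is continuous), so by Assumption \ref{ass-on-Ma}(iv) the map $(\x,\a)\mapsto\int_\om q(\x,\o)\,d\mu_\a(\o)=u(\x,\a)$ is bounded measurable on $\X\times\A$; in particular $\x\mapsto u(\x,\a)$ is measurable for each fixed $\a$. For step (c), I fix $\x$ and take $\a_k\to\a$ in $\A$. Applying Assumption \ref{ass-on-Ma}(iv) to functions $q(\x,\o)=h(\o)$ with $h$ bounded continuous shows $\a\mapsto\int_\om h\,d\mu_\a$ is continuous, i.e. $\mu_{\a_k}\to\mu_\a$ weakly. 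The topological boundary of $\K_\x=\{\o:f(\x,\o)\le0\}$ is contained in the polynomial zero set $\{\o:f(\x,\o)=0\}$, which by Assumption \ref{ass-on-Ma}(iii) is $\mu_\a$-null; hence $\K_\x$ is a $\mu_\a$-continuity set and the portmanteau theorem gives $\mu_{\a_k}(\K_\x)\to\mu_\a(\K_\x)$. Thus $\a\mapsto u(\x,\a)$ is continuous on the compact $\A$, so its maximum is attained.

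Steps (b) and (c) show that $u$ is a Carathéodory function (measurable in $\x$, continuous in $\a$) on $\X\times\A$ with $\A$ compact metric. The measurable maximum theorem (see e.g. Aliprantis and Border, Theorem 18.19) then guarantees simultaneously that $\x\mapsto\kappa(\x):=\max_{\a\in\A}u(\x,\a)$ is measurable and that the argmax correspondence $\x\mapsto\{\a:u(\x,\a)=\kappa(\x)\}$ admits a measurable selector $\x\mapsto\a(\x)$, giving $\kappa(\x)=\mu_{\a(\x)}(\K_\x)$; combined with step (a) this is exactly (\ref{mu-star-1}). I expect the main obstacle to be step (c): the reward $u$ carries the \emph{discontinuous} indicator of $\K_\x$, so its continuity in $\a$ cannot be read off directly from Assumption \ref{ass-on-Ma}(iv). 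It is precisely Assumption \ref{ass-on-Ma}(iii) (each polynomial zero set is $\mu_\a$-null), feeding the portmanteau theorem, that repairs this and makes the measurable-maximum machinery applicable.
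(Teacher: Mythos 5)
Your proposal is correct and follows essentially the same route as the paper: the same averaging argument reduces the supremum over $\mathscr{M}_\a$ to one over $\A$, and the same combination of Assumption \ref{ass-on-Ma}(iv) (weak convergence $\mu_{\a_k}\Rightarrow\mu_\a$), Assumption \ref{ass-on-Ma}(iii) (so that $\partial\K_\x$ is a $\mu_\a$-continuity set) and the Portmanteau theorem yields continuity of $\a\mapsto\mu_\a(\K_\x)$ before a measurable selection theorem is applied. The only cosmetic difference is the selection tool invoked (the Carath\'eodory measurable maximum theorem, with measurability in $\x$ checked explicitly via the indicator of $\K$, versus the paper's citation of \cite[Proposition 4.4]{lass-siopt-2010} under upper semicontinuity), which changes nothing of substance.
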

The proof is postponed to \S \ref{proof-mu-star}. Observe that for all $\x\in\X$, $\kappa(\x)=\mu_{\a(\x)}(\K_\x)=0$ whenever $\K_\x=\emptyset$.
Next, recall that for every $\x\in\X$:
\[\kappa(\x)\,=\,\max_\mu\,\{\mu(\K_\x):\mu\in\mathscr{M}_\a\}
\,=\,\max_\mu\,\{{\rm Prob}_{\mu}(f(\x,\o)\leq0):\mu\in\mathscr{M}_\a\}.\]
and so with $\K_\x^c:=\om\setminus\K_\x$,
\[\mu_{\a(\x)}(\K^c_\x)\,=\,\min_\mu\,\{\mu(\K^c_\x):\mu\in\mathscr{M}_\a\}
\,=\,\min_\mu\,\{{\rm Prob}_{\mu}(f(\x,\o)>0):\mu\in\mathscr{M}_\a\}.\]
\replaced{Consequently,}{So} the set $\X^*_\varepsilon$ in (\ref{new-set}) also reads:
\[\X^*_\varepsilon\,=\,\{\,\x\in\X: \mu_{\a(\x)}(\K^c_\x)\,>\,1-\varepsilon\,\}
\,=\,\{\,\x\in\X: \kappa(\x)\,<\,\varepsilon\,\}.\]

We next consider a certain infinite dimension LP with an important property:
Namely, any feasible solution of its dual (also an infinite dimensional LP)
provides the coefficients of some polynomial $w\in\R[\x]$ such that $\{\x: w(\x)<\varepsilon\}\subset \X^*_\varepsilon$; see Theorem \ref{th-abstract} below. It is the analogue in the 
present context of the LP (12) in \cite{lass-ieee}.

\subsection{An ideal infinite-dimensional LP}
\label{sec-infinite-lp}
Let $\lambda$ be the Lebesgue measure on $\X$, normalized to a probability measure, and 
consider the infinite-dimensional linear program (LP):
\begin{equation}
\label{new-primal-lp}
\begin{array}{rl}
\rho=\displaystyle\sup_{\phi,\psi}&
\{\,\phi(\K): \phi\leq\T^*\psi;\quad\psi_\x\,=\,\lambda,\\
&\phi\in \mathscr{M}_+(\K),\:\psi\in\mathscr{P}(\X\times\A)\,\},\end{array}
\end{equation}
where $\T^*$ is defined in Lemma \ref{def-tstar}.

\begin{thm}
\label{th1-lp}
The  infinite dimensional LP (\ref{new-primal-lp}) has optimal value $\rho=\int_\X \kappa(\x)\,d\x$. Moreover, the feasible pair $(\phi^*,\psi^*)$ with
\[d\phi^*(\x,\o):=1_\K(\x,\o)\,\mu_{\a(\x)}(d\o)\,d\lambda(\x),\quad d\psi^*(\x,\a)=\delta_{\a(\x)}(d\a)\,\lambda(d\x),\]
is an optimal solution of (\ref{new-primal-lp}), with $\x\mapsto(\a(\x),\kappa(\x))$ as in Lemma \ref{mu-star}.
\end{thm}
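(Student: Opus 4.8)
The plan is to establish optimality of $(\phi^*,\psi^*)$ by a sandwich argument. First I would show that the proposed pair is feasible and attains the objective value $\int_\X\kappa\,d\lambda$; then I would show that $\int_\X\kappa\,d\lambda$ is an upper bound on $\phi(\K)$ over \emph{all} feasible pairs. Since this value is simultaneously attained by $(\phi^*,\psi^*)$ and dominates every feasible value, it must equal $\rho$ and $(\phi^*,\psi^*)$ must be optimal. (Here $d\x$ in the statement is read as $d\lambda$, the normalized Lebesgue measure.)

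For feasibility of $(\phi^*,\psi^*)$, the marginal of $\psi^*(d\x,d\a)=\delta_{\a(\x)}(d\a)\,\lambda(d\x)$ on $\X$ is $\lambda$, and $\psi^*\in\mathscr{P}(\X\times\A)$ because $\x\mapsto\a(\x)$ is measurable and $\A$-valued (Lemma \ref{mu-star}), so $\delta_{\a(\x)}$ is a legitimate stochastic kernel. To verify $\phi^*\le\T^*\psi^*$ I would compute $\T^*\psi^*$ via Lemma \ref{def-tstar}: with $\psi=\psi^*$ one gets $(\psi^*)'(d\x,d\a,d\o)=\mu_{\a(\x)}(d\o)\,\delta_{\a(\x)}(d\a)\,\lambda(d\x)$, whose marginal on $\X\times\om$ is $\T^*\psi^*(d\x,d\o)=\mu_{\a(\x)}(d\o)\,\lambda(d\x)$. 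Since $d\phi^*(\x,\o)=1_\K(\x,\o)\,\mu_{\a(\x)}(d\o)\,d\lambda(\x)$ and $1_\K\le1$, we obtain $\phi^*\le\T^*\psi^*$, while $\phi^*\in\mathscr{M}_+(\K)$ is supported on $\K$ with finite mass $\le1$. Its objective value is $\phi^*(\K)=\int_\X\mu_{\a(\x)}(\K_\x)\,d\lambda=\int_\X\kappa(\x)\,d\lambda$, by the definition of $\kappa$ in Lemma \ref{mu-star}.

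For the upper bound, let $(\phi,\psi)$ be any feasible pair. I would disintegrate $\psi$ against its $\X$-marginal $\lambda$ as $\psi(d\x,d\a)=\hat\psi(d\a\vert\x)\,\lambda(d\x)$, and set $\nu_\x(D):=\int_\A\mu_\a(D)\,\hat\psi(d\a\vert\x)$. For $\lambda$-a.e.\ $\x$, $\nu_\x$ is exactly a mixture of the $\mu_\a$ with mixing measure $\hat\psi(\cdot\vert\x)\in\mathscr{P}(\A)$, hence $\nu_\x\in\mathscr{M}_\a$ by \eqref{def-Ma}. By Lemma \ref{def-tstar} one has $\T^*\psi(d\x,d\o)=\nu_\x(d\o)\,\lambda(d\x)$. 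Applying the order relation $\phi\le\T^*\psi$ to the Borel set $\K$ gives $\phi(\K)\le\T^*\psi(\K)=\int_\X\nu_\x(\K_\x)\,d\lambda$, and since $\nu_\x\in\mathscr{M}_\a$, Lemma \ref{mu-star} yields $\nu_\x(\K_\x)\le\max_{\mu\in\mathscr{M}_\a}\mu(\K_\x)=\kappa(\x)$ for a.e.\ $\x$. Integrating against $\lambda$ gives $\phi(\K)\le\int_\X\kappa(\x)\,d\lambda$, as required.

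The main obstacle is not any single computation but making the measurability and disintegration arguments rigorous. One must know that $\x\mapsto\a(\x)$ and $\x\mapsto\kappa(\x)$ are measurable, so that $\phi^*$ and $\psi^*$ are well-defined measures and $\x\mapsto\mu_{\a(\x)}(D)$ is measurable (Assumption \ref{ass-on-Ma}(i) composed with $\a(\cdot)$); this is precisely the content of Lemma \ref{mu-star}, whose proof is deferred. One also needs the disintegration of $\psi$ to exist (standard, since $\X\times\A$ is compact hence Polish) and the identity $\T^*\psi(d\x,d\o)=\nu_\x(d\o)\,\lambda(d\x)$ from Lemma \ref{def-tstar}. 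Granting these, everything reduces to the pointwise extremality $\nu_\x(\K_\x)\le\kappa(\x)$ supplied by Lemma \ref{mu-star}.
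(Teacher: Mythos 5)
Your proposal is correct and follows essentially the same route as the paper: an upper bound on $\phi(\K)$ for arbitrary feasible $(\phi,\psi)$ obtained by disintegrating $\psi$ over $\lambda$ and invoking Lemma \ref{mu-star} to bound $\int_\A\mu_\a(\K_\x)\,\hat\psi(d\a\vert\x)$ by $\kappa(\x)$, combined with the verification that the candidate pair $(\phi^*,\psi^*)$ is feasible and attains $\int_\X\kappa\,d\lambda$. The only cosmetic difference is that you package the inner integral as a mixture $\nu_\x\in\mathscr{M}_\a$ before applying the lemma, and you check $\phi^*\le\T^*\psi^*$ slightly more explicitly than the paper does.
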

\begin{proof}
Let $(\phi,\psi)$ be an arbitrary feasible solution. 
Then
\begin{eqnarray*}
\phi(\K)=\langle 1,\phi\rangle &\leq&\langle 1_\K,\T^*\psi\rangle\,=\,\langle \T 1_\K,\psi\rangle\,=\,
\int_{\X\times\A}\mu_\a(\K_\x)\,d\psi(\x,\a)\\
&=&\int_{\X}\left(\int_\A\mu_{\a}(\K_\x)\,\hat{\psi}(d\a\vert\x)\right)\,\lambda(d\x)\\
&\leq&\int_{\X}\mu_{\a(\x)}(\K_\x)\,\lambda(d\x)\quad\mbox{[by Lemma \ref{mu-star}]}\\
&=&\int_{\X}\kappa(\x)\,\lambda(d\x).
\end{eqnarray*}
Next let $d\phi^*(\x,\o)=1_\K(\x,\o)\,\mu_{\a(\x)}(d\o)\,\lambda(d\x)$ and
$d\psi^*(\x,\a)=\delta_{\a(\x)}(d\a)\,\lambda(d\x)$ with $\x\mapsto\a(\x)$
as in Lemma \ref{mu-star}.  Then $\phi^*\in\mathscr{M}_+(\K)$ 
and $\phi^*\leq\T^*\psi^*$. Moreover:
\[\phi^*(\K)\,=\,\int_\X\left(\int_{\K_\x}d\mu_{\a(\x)}(\o)\right)\,\lambda(d\x)\,=\,
\int_{\X}\kappa(\x)\,\lambda(d\x)\]
as $\mu_{\a(\x)}(\K_\x)=0$ whenever $\K_\x=\emptyset$.
\end{proof}
Hence, Theorem \ref{th1-lp} states that  in an optimal solution $(\phi^*,\psi^*)$ of the LP 
(\ref{new-primal-lp}), at every $\x\in \X$, the conditional probability $\hat{\phi}^*(\cdot\vert\x):=\mu_{\a(\x)}\in\mathscr{M}_\a$
identifies the {\em worst case} noise distribution $\mu_{\a(\x)}$ in $\mathscr{M}_\a$, that is,
the one which maximizes ${\rm Prob}_{\mu}(f(\x,\o)\leq0)$ over $\mathscr{M}_\a$,
hence which minimizes ${\rm Prob}_{\mu}(f(\x,\o)>0)$ over $\mathscr{M}_\a$.

\subsection*{A dual of (\ref{new-primal-lp})}

Recall that by Lemma \ref{T-poly},
the mapping $\T$ extends to polynomials, and so
consider the infinite dimensional LP:
\begin{equation}
\label{chance-lp-dual}
\begin{array}{rl}
\rho^*\,=\,\displaystyle\inf_{h,w}&\{\displaystyle\int_\X w\,d\lambda:\quad h(\x,\o)\,\geq\,1\quad\mbox{on $\K$},\\
&w(\x)-\T h(\x,\a)\geq0\quad \mbox{on $\X\times\A$},\\
&h\geq0\mbox{ on $\X\times\om$},\\
&w\in\R[\x];\:h\in\R[\x,\o]\}.
\end{array}
\end{equation}

\begin{thm}
\label{th-abstract}
The infinite dimensional LP (\ref{chance-lp-dual}) is a dual of (\ref{new-primal-lp})
that is, $\rho^*\geq\rho$. In addition, for every feasible solution $(w,h)$ of \eqref{chance-lp-dual}:
\begin{equation}
\label{aux1}
w(\x)\,\geq\,\kappa(\x),\qquad\forall \x\in\X,
\end{equation}
with $\x\mapsto \kappa(\x)$ as in Lemma \ref{mu-star}, and so for every $\varepsilon>0$:
\begin{equation}
\label{subset-0}
\X_w\,:=\,\{\x: w(\x)\,<\,\varepsilon\}\subset \{\x: \mu_{\a(\x)}(\K^c_\x)\,>\,1-\varepsilon\}\,=\,\X^*_\varepsilon.
\end{equation}
Moreover, suppose that 
there is no duality gap, i.e., $\rho^*=\rho$, and let
$(w_n,h_n)$ be a minimizing sequence of (\ref{chance-lp-dual}). Then
with $\Vert\cdot\Vert_1$ the norm of $L_1(\X,\lambda)$:
\begin{equation}
\label{subset}
\lim_{n\to\infty}\,\Vert w_n-\kappa\Vert_1\,=\,0,\quad
\mbox{and}\quad\displaystyle\lim_{n\to\infty}\lambda(\X^*_\varepsilon\setminus \X_{w_n})=0.
\end{equation}
\end{thm}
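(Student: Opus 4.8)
The plan is to prove the four assertions in order, the weak duality $\rho^*\geq\rho$ and the pointwise estimate \eqref{aux1} being a matter of chaining the defining inequalities through the adjoint identity \eqref{poly}, while the convergence statements \eqref{subset} rest on a measure-theoretic argument built from the pointwise bound. First I would establish weak duality. Fix a primal-feasible pair $(\phi,\psi)$ and a dual-feasible pair $(w,h)$. Since $h\geq1$ on $\K$ and $\phi\in\mathscr{M}_+(\K)$, one has $\phi(\K)=\langle 1,\phi\rangle\leq\langle h,\phi\rangle$; since $h\geq0$ on $\X\times\om$ and $\phi\leq\T^*\psi$ (so $\T^*\psi-\phi$ is a nonnegative measure), $\langle h,\phi\rangle\leq\langle h,\T^*\psi\rangle$, which by Lemma \ref{T-poly} equals $\langle\T h,\psi\rangle$. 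The constraint $w-\T h\geq0$ on $\X\times\A$ gives $\langle\T h,\psi\rangle\leq\langle w,\psi\rangle$, and because $w$ depends only on $\x$ while $\psi_\x=\lambda$, this last quantity is $\int_\X w\,d\lambda$. Chaining yields $\phi(\K)\leq\int_\X w\,d\lambda$; all terms are finite since the right-hand side bounds the whole chain and $w$ is a polynomial on the compact set $\X$. Taking the supremum over $(\phi,\psi)$ and the infimum over $(w,h)$ gives $\rho\leq\rho^*$.

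Next I would prove \eqref{aux1}. For dual-feasible $(w,h)$ and fixed $\x,\a$, integrating the inequalities $h(\x,\cdot)\geq1$ on $\K_\x$ and $h(\x,\cdot)\geq0$ on $\om$ against $\mu_\a$ gives $\T h(\x,\a)=\int_\om h(\x,\o)\,d\mu_\a(\o)\geq\mu_\a(\K_\x)$. The constraint $w(\x)\geq\T h(\x,\a)$ for all $\a\in\A$ then yields $w(\x)\geq\sup_{\a\in\A}\mu_\a(\K_\x)=\kappa(\x)$ by Lemma \ref{mu-star}, which is \eqref{aux1}. Since $\X^*_\varepsilon=\{\x:\kappa(\x)<\varepsilon\}$ as recorded just before the theorem, the inclusion $\X_w=\{\x:w(\x)<\varepsilon\}\subseteq\{\x:\kappa(\x)<\varepsilon\}=\X^*_\varepsilon$ of \eqref{subset-0} is immediate.

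For the convergence \eqref{subset} I would use the no-gap hypothesis: a minimizing sequence satisfies $\int_\X w_n\,d\lambda\to\rho^*=\rho=\int_\X\kappa\,d\lambda$. Because $w_n\geq\kappa$ pointwise by \eqref{aux1}, the integrand $w_n-\kappa$ is nonnegative, so $\Vert w_n-\kappa\Vert_1=\int_\X(w_n-\kappa)\,d\lambda\to0$, giving the first limit. The set convergence is where the real work lies. $L_1$-convergence gives convergence in measure, hence $\lambda(\{\x:w_n(\x)-\kappa(\x)\geq\eta\})\to0$ for every $\eta>0$. Writing $\X^*_\varepsilon\setminus\X_{w_n}=\{\x:\kappa(\x)<\varepsilon\leq w_n(\x)\}$ and splitting according to whether $\kappa\leq\varepsilon-\eta$ or $\varepsilon-\eta<\kappa<\varepsilon$, I would bound $\lambda(\X^*_\varepsilon\setminus\X_{w_n})\leq\lambda(\{w_n-\kappa\geq\eta\})+\lambda(\{\x:\varepsilon-\eta<\kappa(\x)<\varepsilon\})$. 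Letting $n\to\infty$ kills the first term, and then letting $\eta\downarrow0$ kills the second by continuity of $\lambda$ from above, since $\bigcap_{\eta>0}\{\x:\varepsilon-\eta<\kappa(\x)<\varepsilon\}=\emptyset$. I expect this double limit to be the main obstacle: no control is available on the boundary level set $\{\x:\kappa(\x)=\varepsilon\}$, so one cannot argue directly from convergence in measure at the threshold $\varepsilon$. The two-parameter splitting is precisely what circumvents this, converting threshold-crossing into a genuine gap $w_n-\kappa\geq\eta$ on the bulk and relegating the remainder to a thin shell whose measure vanishes as $\eta\downarrow0$.
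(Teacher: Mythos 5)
Your proposal is correct and follows essentially the same route as the paper: the same chain of inequalities for weak duality via Lemma \ref{T-poly}, the same pointwise bound $w\geq\kappa$ from integrating the dual constraints against $\mu_\a$ (you take a supremum over $\a\in\A$ where the paper plugs in the conditional $\delta_{\a(\x)}$ of $\psi^*$, which is the same computation), and the same convergence-in-measure plus thin-shell argument for the set convergence. Your two-parameter splitting of $\X^*_\varepsilon\setminus\X_{w_n}$ is just a direct reformulation of the paper's decomposition $\X^*_\varepsilon=\bigcup_\ell R_\ell$ with $R_\ell=\{\x:\kappa(\x)<\varepsilon-1/\ell\}$.
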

The proof is postponed to \S \ref{proof-th-abstract}. 
Theorem \ref{th-abstract} is still abstract and the challenge is to define a practical method to compute $\rho$ and to {\em construct} effectively, inner approximations of $\X^*_\varepsilon$ which converge to $\X^*_\varepsilon$, as the approximations $\X_{w_n}$ in (\ref{subset}).

\section{A hierarchy of semidefinite relaxations}

In this section we provide a numerical scheme to
approximate from above the optimal value $\rho$ of the infinite-dimensional LP (\ref{new-primal-lp}) and its dual 
(\ref{chance-lp-dual}). In addition, from an optimal solution of
the approximation of the dual (\ref{chance-lp-dual}),
one is able to construct {\em effectively} inner approximations of $\X^*_\varepsilon$
which converge to $\X^*_\varepsilon$, as the approximations \replaced{$\X_{w_n}$}{$H_n$} in (\ref{subset}).

As a preliminary, we first show that the measure
$\T\psi^*$ in the constraint $\phi\leq\T^*\psi$, \added{and} identified as
$\psi'_{\x,\o}$ in Lemma \ref{def-tstar}, can be ``handled" through its moments.
 
\begin{lem}
\label{lem-handle}
Let $\nu\in\mathcal{P}(\X\times\om)$ and
$\psi\in\mathcal{P}(\X\times\A)$ with $\psi_\x=\lambda$, and let
\begin{equation}
\label{moments}
\int_{\X\times\om}\x^\alpha\o^\beta\,d\nu(\x,\o)\,=\,\int_{\X\times\A}\x^\alpha\,p_\beta(\a)\,d\psi(\x,\a),\quad\forall\alpha\in\N^n,\beta\in\N^p.\end{equation}
Then $\nu=\T^*\psi$.
\end{lem}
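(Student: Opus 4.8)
The plan is to show that $\nu$ and $\T^*\psi$ are two \emph{probability} measures on $\X\times\om$ carrying the same moment sequence, and then to force equality by a moment-determinacy argument. Note first that $\T^*\psi=\psi'_{\x,\o}$ by Lemma \ref{def-tstar}, so it is the $(\x,\o)$-marginal of the probability measure $\psi'$ of Definition \ref{def-extension}; since each $\mu_\a$ is a probability measure and $\psi\in\mathcal{P}(\X\times\A)$, the measure $\T^*\psi$ is a probability measure. Likewise $\nu\in\mathcal{P}(\X\times\om)$ by hypothesis.

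First I would compute the moments of $\T^*\psi$. Applying the polynomial duality \eqref{poly} of Lemma \ref{T-poly} to the monomial $h(\x,\o)=\x^\alpha\o^\beta$, and using Assumption \ref{ass-on-Ma}(ii) to get $\T(\x^\alpha\o^\beta)(\x,\a)=\x^\alpha\,p_\beta(\a)$, I obtain
\[
\int_{\X\times\om}\x^\alpha\o^\beta\,d\T^*\psi=\langle \x^\alpha\o^\beta,\T^*\psi\rangle=\langle\T(\x^\alpha\o^\beta),\psi\rangle=\int_{\X\times\A}\x^\alpha\,p_\beta(\a)\,d\psi(\x,\a),
\]
for all $\alpha\in\N^n,\beta\in\N^p$. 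Comparing with the hypothesis \eqref{moments} shows at once that $\nu$ and $\T^*\psi$ have identical moments; call this common sequence $\y$. (Taking $\alpha=\beta=0$ and $p_0\equiv1$ confirms both have total mass one.)

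Second, I would prove that $\y$ is \emph{determinate} via Lemma \ref{lem-carleman}. Positivity $\M_d(\y)\succeq0$ for every $d$ is automatic since $\y$ is the moment sequence of the measure $\nu$. It remains to verify the Carleman/Nussbaum condition \eqref{carleman} for each of the $n+p$ coordinates of $(\x,\o)$. For the $\x$-coordinates this is immediate from compactness of $\X$: the moments $L_\y(x_i^{2k})$ are bounded by $R^{2k}$ for some $R$, so $L_\y(x_i^{2k})^{-1/2k}\geq R^{-1}$ and the series diverges. For the $\o$-coordinates (the only delicate case, arising when $\om$ is non-compact) I would invoke Assumption \ref{ass-on-Ma-2}: from $\exp(c|t|)\geq (c|t|)^{2k}/(2k)!$ one gets the uniform bound $\int_\om\o_i^{2k}\,d\mu_\a\leq \gamma\,(2k)!/c^{2k}$, whence $L_\y(\o_i^{2k})=\int_{\X\times\A}\bigl(\int_\om\o_i^{2k}\,d\mu_\a\bigr)\,d\psi\leq \gamma\,(2k)!/c^{2k}$. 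By Stirling, $L_\y(\o_i^{2k})^{-1/2k}$ is asymptotically $c\,e/(2k)$ up to a factor tending to $1$, so $\sum_k L_\y(\o_i^{2k})^{-1/2k}=+\infty$ and \eqref{carleman} holds.

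Lemma \ref{lem-carleman} then yields that $\y$ has a \emph{unique} representing measure. As both $\nu$ and $\T^*\psi$ represent $\y$, they must coincide, giving $\nu=\T^*\psi$. The main obstacle is precisely the second step: verifying the multivariate Carleman condition for the combined variable on the possibly unbounded domain $\X\times\om$, where the uniform exponential-moment estimate of Assumption \ref{ass-on-Ma-2} is exactly what is needed to tame the growth of the $\o$-moments; should $\om$ be compact, this step collapses to the same boundedness argument used for $\X$.
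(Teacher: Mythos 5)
Your proof is correct, but it takes a genuinely different route from the paper's. You compare the two measures \emph{jointly}: you identify the moments of $\T^*\psi$ via \eqref{poly}, observe they coincide with those of $\nu$ by hypothesis \eqref{moments}, and then kill the difference with the multivariate Nussbaum criterion (Lemma \ref{lem-carleman}) applied to the common moment sequence on $\X\times\om$ --- compactness of $\X$ handling the $\x$-coordinates and the uniform exponential bound of Assumption \ref{ass-on-Ma-2} handling the $\o$-coordinates. The paper instead argues \emph{fiberwise}: it disintegrates $\nu$ and $\psi$, matches the marginals on $\X$ (via $\beta=0$ and compactness of $\X$), matches the conditional moments $\int_\om\o^\beta\,\hat{\nu}(d\o\vert\x)=\int_\A p_\beta(\a)\,\hat{\psi}(d\a\vert\x)$ for $\x$ outside a countable union of null sets, identifies the right-hand side as the moments of a mixture $\varphi_\x\in\mathscr{M}_\a$, and invokes determinacy of each $\varphi_\x$ (Lemma \ref{mom-det}) to conclude $\hat{\nu}(d\o\vert\x)=\varphi_\x$ almost everywhere. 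Your argument is shorter and sidesteps the disintegration and the null-set bookkeeping, at the price of needing joint determinacy on the possibly unbounded product space rather than determinacy of measures on $\om$ alone; the Carleman estimate you carry out for the $\o$-coordinates is essentially the computation inside Lemma \ref{mom-det}, so the analytic input is the same. The paper's fiberwise route yields as a by-product the structural fact that the conditional of $\nu$ given $\x$ is itself an element of $\mathscr{M}_\a$, which is in the spirit of how worst-case distributions are identified elsewhere in the paper, but is not needed for the statement of the lemma. Both proofs require Assumption \ref{ass-on-Ma-2} when $\om$ is unbounded (and neither actually uses the hypothesis $\psi_\x=\lambda$ beyond matching marginals).
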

The proof is postponed to \S \ref{proof-lem-handle}.

\subsection{A hierarchy of semidefinite relaxations of (\ref{new-primal-lp})}

The compact basic semi-algebraic sets $\X$ and $\A$ \added{and the basic semi-algebraic set $\Omega$} are defined by:
\begin{eqnarray}
\label{set-X}
\X&:=&\{\x:g_j(\x)\geq0,\:j=1,\ldots,m\}\\
\label{set-A}
\A &:=&\{\a\in\R^t: q_\ell(\a)\,\geq\,0,\quad \ell=1,\ldots,L\}\\
\label{set-omega}
\om&:=&\{\o\in\R^p: s_\ell(\o)\geq\,0,\quad \ell=1,\ldots,\bar{s}\},
\end{eqnarray}
for some polynomials $(g_j)\subset\R[\x]$, $(q_\ell)\subset\R[\a]$ and $(s_\ell)\subset\R[\o]$. In particular if \replaced{$\bar{s}=0$}{$s=0$} then $\om=\R^p$. 

Let $d_j:=\lceil {\rm deg}(g_j)/2\rceil$, $d'_\ell:=\lceil {\rm deg}(q_\ell)/2\rceil$, $j=1,\ldots,m$, 
$\ell=1,\ldots,L$. Also let $d^1_\ell=\lceil{\rm deg}(s_\ell)/2\rceil$, $\ell=1,\ldots,\bar{s}$.
For notational convenience we also define 
$g_{m+1}(\x,\o)=-f(\x,\o)$ with $d_{m+1}:=\lceil {\rm deg}(f)/2\rceil$.
As $\X$ and $\A$ are compact, \deleted{then} $\X\subset\{\x:\Vert\x\Vert^2\leq M\}$ and 
$\A\subset\{\a:\Vert\a\Vert^2\leq M\}$ for some $M$ sufficiently large.
Therefore with no loss of generality we may and will assume that
\begin{equation}
\label{archimedian}
g_1(\x)=M-\Vert\x\Vert^2,\quad q_1(\a)=M-\Vert\a\Vert^2,\end{equation}
for some $M$ sufficiently large. Similarly
if $\om$ in (\ref{set-omega}) is compact then we may and will also assume  
that $s_1(\o)=M-\Vert\o\Vert^2$.
This will be very useful as it ensures compactness of the feasible sets of 
the semidefinite relaxations defined below.

Next, recall that by Assumption \ref{ass-on-Ma}(ii), for every $\beta\in\N^p$,
$\displaystyle\int_\om \o^\beta d\mu_\a(\o)=p_\beta(\a)$, for all $\a\in\A$,
for some polynomial $p_\beta\in\R[\a]$.\\

Consider the following hierarchy of semidefinite programs indexed by $d\geq \replaced{d_{\mathrm{min}}}{d_0}\in\N$, \added{where $2d_{\mathrm{min}}$ is the largest degree appearing in the polynomials that describe $\K,\om,$ and $\A$:}
\begin{equation}
\label{chance-sdp}
\begin{array}{rl}
\rho_d=&\displaystyle\sup_{\y,\u,\v}y_{00}\\
\mbox{s.t.}& L_{\y+\u}(\x^\alpha\o^\beta)-L_\v(\x^\alpha p_\beta(\a))\,=0,\:\vert\alpha+{\rm deg}(p_\beta)\vert\leq 2d,\\
&L_\v(\x^\alpha)=\lambda_\alpha,\quad\alpha\in\N^n_{2d},\\
&\M_d(\y),\M_d(\u),\M_d(\v)\,\succeq\,0,\\
&\M_{d-d_{m+1}}(g_{m+1}\,\y)\succeq0,\\
&\M_{d-d_j}(g_j\,\y),\: \M_{d-d_j}(g_j\,\u),\,\M_d(g_j\,\v)\succeq0,\quad j=1,\ldots,m,\\
&\M_{d-d^1_\ell}(s_\ell\,\y),\:\M_{d-d^1_\ell}(s_\ell\,\u)\,\succeq 0,\quad \ell=1,\ldots\bar{s},\\
&\M_{d-d'_\ell}(q_\ell\,\v)\,\succeq0,\quad \ell=1,\ldots,L,
\end{array}
\end{equation}
where $\y=(y_{\alpha\beta})$, \replaced{$\u=(u_{\alpha\beta})$}{$\u=(v_{\alpha\beta})$}, $(\alpha,\beta)\in\N^n\times\N^p$ and
$\v=(v_{\alpha\eta})$, $(\alpha,\eta)\in\N^n\times\N^t$.

\begin{prop}\label{prop: relaxation}
The semidefinite program \eqref{chance-sdp} is a relaxation of \eqref{new-primal-lp}, i.e., $\rho_d\geq\rho$ for all $d\geq \replaced{d_{\mathrm{min}}}{d_0}$. In addition $\rho_{d+1}\leq\rho_d$ for all $d\geq \replaced{d_{\mathrm{min}}}{d_0}$.
\end{prop}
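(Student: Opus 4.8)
The plan is to prove the two assertions separately, each by a standard moment-relaxation argument; the inequality $\rho_d\ge\rho$ carries all the content, while monotonicity is a routine truncation. For $\rho_d\ge\rho$, first I would show that every feasible pair $(\phi,\psi)$ of the infinite-dimensional LP \eqref{new-primal-lp} produces a feasible triple $(\y,\u,\v)$ of the semidefinite program \eqref{chance-sdp} with the same objective value. The natural choice is to take $\y$ to be the moment sequence of $\phi$, $\v$ the moment sequence of $\psi$, and to encode the inequality constraint $\phi\le\T^*\psi$ by introducing the nonnegative slack measure
\[
\sigma\,:=\,\T^*\psi-\phi\,\in\,\mathscr{M}_+(\X\times\om),
\]
taking $\u$ to be the moment sequence of $\sigma$. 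That $\sigma$ is a well-defined nonnegative measure supported in $\X\times\om$ follows from feasibility ($\phi\le\T^*\psi$) together with the identification $\T^*\psi=\psi'_{\x,\o}$ of Lemma \ref{def-tstar}.

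With this choice I would verify feasibility group by group. The semidefinite constraints of \eqref{chance-sdp} are exactly the standard necessary moment conditions for $\phi$ to be supported on $\K$, for $\sigma$ on $\X\times\om$, and for $\psi$ on $\X\times\A$; each moment or localizing matrix is of the form $\M_d(\cdot)\succeq0$ or $\M_{d-d_j}(g_j\,\cdot)\succeq0$ and is automatically positive semidefinite because the underlying object is a genuine measure on the corresponding basic semi-algebraic set. The marginal condition $\psi_\x=\lambda$ gives $L_\v(\x^\alpha)=\lambda_\alpha$. The coupling equality $\phi+\sigma=\T^*\psi$ becomes the linear moment constraint $L_{\y+\u}(\x^\alpha\o^\beta)=L_\v(\x^\alpha p_\beta(\a))$ upon applying the polynomial adjoint identity \eqref{poly} of Lemma \ref{T-poly} to the monomial $h=\x^\alpha\o^\beta$, which replaces $\int_\om\o^\beta d\mu_\a$ with $p_\beta(\a)$; this is precisely the moment form \eqref{moments} appearing in Lemma \ref{lem-handle}, confirming that the linear equalities faithfully encode $\phi+\sigma=\T^*\psi$. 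Finally the objective matches, $y_{00}=\phi(\X\times\om)=\phi(\K)$, so $\rho_d\ge\phi(\K)$ for every feasible $(\phi,\psi)$ and hence $\rho_d\ge\rho$.

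The one genuinely non-formal point, and the main obstacle, is to guarantee that all moments entering $(\y,\u,\v)$ up to degree $2d$ are finite, since $\om$ may be unbounded. Here I would invoke Assumption \ref{ass-on-Ma-2}: the uniform exponential integrability $\sup_{\a\in\A}\int_\om\exp(c\,|\o_i|)\,d\mu_\a<\gamma$ forces every absolute polynomial moment of $\mu_\a$ to be bounded uniformly in $\a$, so that $\T^*\psi$ has finite absolute moments of all orders by compactness of $\X\times\A$; consequently $\phi\le\T^*\psi$ and $\sigma=\T^*\psi-\phi$ do as well. Thus the three truncated moment sequences are well-defined and the correspondence of the first paragraph is legitimate.

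For monotonicity, I would argue by truncation. Given any feasible triple of \eqref{chance-sdp} at order $d+1$, restrict each of $\y,\u,\v$ to indices of degree at most $2d$. Every order-$d$ moment matrix is a principal submatrix of the corresponding order-$(d+1)$ matrix, and likewise for each localizing matrix, so positive semidefiniteness is inherited; the order-$d$ linear equalities form a subset of the order-$(d+1)$ ones; and the objective entry $y_{00}$ is unchanged. Hence each order-$(d+1)$ feasible point restricts to an order-$d$ feasible point of the same value, giving $\rho_{d+1}\le\rho_d$. Both steps use only $d\ge d_{\mathrm{min}}$, which is exactly the condition ensuring that every localizing matrix has nonnegative order and is therefore well-defined.
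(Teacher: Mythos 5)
Your proposal is correct and follows essentially the same route as the paper: take $\y,\v$ to be the moment sequences of $\phi,\psi$, take $\u$ to be the moments of the slack measure $\T^*\psi-\phi$, verify the semidefinite and linear constraints from the support and marginal conditions together with Lemma \ref{T-poly}, and obtain monotonicity by truncation. Your extra remark on finiteness of the moments in the unbounded case via Assumption \ref{ass-on-Ma-2} is a sound addition that the paper's proof leaves implicit.
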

\begin{proof}
That $\rho_{d+1}\leq\rho_d$ is straightforward as more constraints are added as $d$ increases.
Next, let $\phi,\psi$ be an arbitrary feasible solution of \eqref{new-primal-lp} and let
$\y=(y_{\alpha\beta})$, \replaced{$\u=(u_{\alpha\beta})$}{$\u=(v_{\alpha\beta})$}, $(\alpha,\beta)\in\N^n\times\N^p$ and
$\v=(v_{\alpha\eta})$, $(\alpha,\eta)\in\N^n\times\N^t$ be the moments sequences of the measure
$\phi,\T^*\psi-\phi$ and $\psi$, respectively. 
In the following we first show that $\y, \u$, and $\v$ are feasible for \eqref{chance-sdp}.
Necessarily $\M_d(\y)\succeq0$ because for every vector $\p\in\R^{s(d)}$ (with $s(d):=\binom{n+d}{n}$),
\[\langle \p,\M_d(\y)\,\p\rangle\,=\,\int_\K p(\x,\o)^2d\phi(\x,\o)\,\geq\,0,\]
where $p\in\R[\x]_d$ has $\p$ as vector of coefficients. With similar arguments, \added{one can}
show that $\M_d(\u)\succeq0$ and $\M_d(\v)\succeq0$. Next, as $\phi$ is supported on $\K$,
\deleted{then} for every $\p\in\R^{s(d-d_j)}$:
\[\langle \p,\M_{d-d_j}(g_j\,\y)\,\p\rangle\,=\,\int_\K p(\x,\o)^2\,g_j(\x)\,d\phi(\x,\o)\,\geq\,0,\]
because $g_j(\x)\geq0$ whenever $(\x,\o)\in\K$. Hence $\M_d(g_j\,\y)\succeq0$ (and similarly $\M_{d-d_j}(g_j\,\u),\M_{d-d_j}(g_j\,\v)\succeq0$). Similarly for every $\p\in\R^{s(d-d^1_\ell)}$:
\[\langle \p,\M_{d-d^1_\ell}(s_\ell\,\y)\,\p\rangle\,=\,\int_\K p(\x,\o)^2\,s_\ell(\o)(\x)\,d\phi(\x,\o)\,\geq\,0,\]
because $s_\ell(\o)\geq0$ whenever $(\x,\o)\in\K$. Hence $\M_{d-d^1_\ell}(s_\ell\,\y)\succeq0$ (and similarly $\M_{d-d^1_\ell}(s_\ell\,\u)\succeq0$). Finally, as $\psi$ is supported on
$\X\times\A$ then for every $\p\in\R^{s(d-d'_\ell)}$:
\[\langle \p,\M_{d-d'_\ell}(q_\ell\,\v)\,\p\rangle\,=\,\int_{\X\times\A} p(\x,\o)^2\,q_\ell(\a)\,d\psi(\x,\a)\,\geq\,0,\]
because $q_\ell(\a)\geq0$ whenever $(\x,\a)\in\X\times\A$. Hence $\M_{d-d'_\ell}(q_\ell\,\v)\succeq0$.

Next, as $\psi_\x=\lambda$, then for every $\alpha\in\N^n_{2d}$:
\[L_\v(\x^\alpha\o^0)\,=\,v_{\alpha0}\,=\,\int_{\X\times\A} \x^\alpha\,d\psi(\x,\a)\,=\,\int_\X\x^\alpha \,d\lambda(\x)\,=\,\lambda_\alpha.\]
Finally, as $\T^*\psi\geq\phi$ (equivalently $\T^*\psi=\phi+\nu$ with $\nu$ being the slack measure $\T^*\psi-\phi$)), then for every $(\alpha,\beta)\in\N^n\times\N^p$ with
$\vert\alpha\vert+{\rm deg}(p_\beta)\leq2d$:
\begin{eqnarray*}
L_\v(\x^\alpha\,p_\beta(\a))&=&\int_{\X\times\A}\x^\alpha p_\beta(\a)\,d\psi(\x,a)
=\int_{\X\times\A}\T (\x^\alpha\o^\beta)\,d\psi(\x,\a)\\
&=&\int_{\X\times\Omega}\x^\alpha\o^\beta\,d(\T^*\psi)(\x,\o)\,=\,
\int_{\X\times\Omega}\x^\alpha\o^\beta\,d(\phi+\nu)\\
&=&L_{\y+\u}(\x^\alpha\o^\beta),
\end{eqnarray*}
and therefore $(\y,\u,\v)$ is a feasible solution of \eqref{chance-sdp}. Finally, we show that $\rho_d\geq \rho$. To that end, note that $\y_{00}=\int_\K d\phi=\phi(\K)$, i.e.,  $\rho_d\geq\phi(\K)$, and in particular 
$\rho\added{_d}\geq\phi^*(\K)\added{=\rho}$ for the optimal solution 
$\phi^*$ of \eqref{new-primal-lp}.
\end{proof}\replaced{By Proposition \ref{prop: relaxation}}{Therefore} (\ref{chance-sdp}) defines a {\em hierarchy} of semidefinite programs whose associated sequence
of optimal values $(\rho_d)_{d\geq \replaced{d_{\mathrm{min}}}{d_0}}$ is nonnegative and monotone non increasing. 

\subsection*{Size of \eqref{chance-sdp}} The semidefinite program involves 
moment matrices of size 
$\tau_1:=\binom{n+p\added{+d}}{d}$  and $\tau_2=\binom{n+t\added{+d}}{d}$. 
So even though \eqref{chance-sdp} can be solved in time polynomial in its input size,
the computational burden is rapidly prohibitive, and in view of the present status of semidefinite solvers,
this approach is limited to relaxations for\deleted{t} which $\max[\tau_1,\tau_2]\leq 10^3$, i.e., for modest 
size problems. However, recall that even the first relaxations provide inner approximations of $\X^*_\varepsilon$.

\subsection{The dual of the semidefinite relaxations (\ref{chance-sdp})}

The dual of (\ref{chance-sdp}) is an SDP which has the following high-level
interpretation in terms of SOS positivity certificates of size parametrized by $d$:

\begin{equation}
\label{chance-sdp-dual}
\begin{array}{rl}
\rho^*_d=\displaystyle\inf_{h,w,\sigma^i_j} &\displaystyle\int_\X w(\x)\,d\lambda(\x):\\
\mbox{s.t}&h(\x,\o)-1=\displaystyle\sum_{j=0}^{m+1}\sigma^1_j\,g_j
+\displaystyle\sum_{\ell=1}^{\bar{s}}\sigma^1_\ell\,s_\ell,\quad\forall (\x,\o);\\
&h(\x,\o)=\displaystyle\sum_{j=0}^m\sigma^2_j\,g_j
+\displaystyle\sum_{\ell=1}^{\bar{s}}\sigma^2_\ell\,s_\ell,\quad\forall (\x,\o);\\
&w(\x)-\displaystyle\sum_{\alpha,\beta}h_{\alpha\beta}\,\x^\alpha\,p_\beta(\a)=
\displaystyle\sum_{j=0}^m\sigma^3_j\,g_j+\displaystyle\sum_{\ell=1}^L\sigma^3_\ell\,q_\ell,\quad\forall (\x,\a);\\
&{\rm deg}(h),\:{\rm deg}(w)\leq 2d;\:\sigma_j^1\in\Sigma[\x,\o]_{d-d_j},j=0,\ldots,m+1,\\
&\sigma_j^2\in\Sigma[\x,\o]_{d-d_j},\:\sigma_j^3\in\Sigma[\x,\a]_{d-d_j};j=0,\ldots,m,\\
&\sigma^1_\ell,\sigma^2_\ell\in\Sigma[\o]_{d-d^1_\ell};\sigma_\ell^3\in\Sigma[\x,\a]_{d-d'_\ell},\:\ell=1,\ldots,L,
\end{array}
\end{equation}
where $h(\x,\o)=\sum_{\vert\alpha+\beta\vert\leq 2d}h_{\alpha\beta}\,\x^\alpha\,\o^\beta$, and $w(\x)=\sum_{\vert\alpha\vert\leq 2d}w_{\alpha}\,\x^\alpha$.

In compact form, (\ref{chance-sdp-dual}) is 
the high level interpretation of the dual SDP of \eqref{chance-sdp} in terms of SOS positivity certificates
of size parametrized by $d$. Indeed:

$\bullet$ The dual variable $h_{\alpha\beta}$ associated with the equality constraint
$L_{\y+\u}(\x^\alpha\o^\beta)=L_\v(\x^\alpha\,p_\beta(\a))$ is the coefficient of $\x^\alpha\o^\beta$ 
for the polynomial $h\in\R[\x,\o]_{2d}$ in \eqref{chance-sdp-dual}. 

$\bullet$ Similarly,  the dual variable $w_\alpha$
associated with the equality constraint  $L_\v(\x^\alpha)=\lambda_\alpha$  is the coefficient
of $\x^\alpha$ for the polynomial $w\in\R[\x]_{2d}$ in \eqref{chance-sdp-dual}.

$\bullet$ $\sigma_j^1\in\Sigma[\x,\o]_{d-d_j}$  (resp. $\sigma_j^2\in\Sigma[\x,\o]_{d-d_j}$) 
is the SOS polynomial associated with the 
matrix dual variable $\X^1_j\succeq0$ (resp. $\X^2_j\succeq0$)
associated with the semidefinite constraint $\M_{d-d_j}(g_j\,\y)\succeq0$ (resp. $\M_{d-d_j}(g_j\,\u)\succeq0$) of \eqref{chance-sdp}.

$\bullet$ $\sigma_\ell^1\in\Sigma[\x,\o]_{d-d^1_\ell}$  (resp. $\sigma_\ell^2\in\Sigma[\x,\o]_{d-d^1_\ell}$) 
is the SOS polynomial associated with the 
matrix dual variable $\Z^1_j\succeq0$ (resp. $\Z^2_j\succeq0$)
associated with the semidefinite constraint $\M_{d-d^1_\ell}(s_\ell\,\y)\succeq0$ (resp. $\M_{d-d^1_\ell}(s_\ell\,\u)\succeq0$) of \eqref{chance-sdp}.

$\bullet$ $\sigma_j^3\in\Sigma[\x,\a]_{d-d_j\ell}$  (resp. $\sigma_\ell^3\in\Sigma[\x,\a]_{d-d'_\ell}$) 
is the SOS polynomial associated with the 
matrix dual variable $\W^1_j\succeq0$ (resp. $\W^2_j\succeq0$)
associated with the semidefinite constraint $\M_{d-d_j}(g_j\,\v)\succeq0$ (resp. $\M_{d-d'_\ell}(s_\ell\,\v)\succeq0$) of \eqref{chance-sdp}.

The SDP \eqref{chance-sdp-dual} is a {\em reinforcement} of the infinite-dimensional dual (\ref{chance-lp-dual}) in which the positivity constraints 
have been replaced with SOS {\em positivity certificates} \`a la Putinar \added{(see \cite{book1})}. For instance,
the positivity constraint ``$h\geq1$ on $\K$" in (\ref{chance-lp-dual}) becomes in (\ref{chance-sdp-dual}) the stronger:
\[h(\x,\o)-1=\displaystyle\sum_{j=0}^{m+1}\sigma^1_j\,g_j
+\displaystyle\sum_{\ell=1}^{\bar{s}}\sigma^1_\ell\,s_\ell,\quad\forall (\x,\o),\]
for some SOS polynomials $(\sigma^1_j)$ \added{and $(\sigma^1_\ell)$}.
\begin{thm}
\label{th4}
Let Assumption \ref{ass-on-Ma} (and Assumption \ref{ass-on-Ma-2} as well if
$\om$ is unbounded) \added{hold} and assume that $\K,\A,\X\times\om$ and $\X\times\om\setminus\K$ all have nonempty interior. Then:

(i) Slater's condition holds for (\ref{chance-sdp}) and so strong duality holds. That is, for every $d\geq \replaced{d_{\mathrm{min}}}{d_0}$, $\rho^*_d=\rho_d$ and (\ref{chance-sdp-dual}) has an optimal solution
$(h_d,w_d)$. 

(ii) Next, define $\X^d_\varepsilon:=\{\x\in\X: w_d(\x)\,<\varepsilon\,\}$.
Then $\X^d_\varepsilon\subset\X^*_\varepsilon$ for every $d\geq \replaced{d_{\mathrm{min}}}{d_0}$. In addition, if $\lim_{d\to\infty}\rho_d
=\rho$ then with $\x\mapsto\kappa(\x)$ as in Lemma \ref{mu-star}:
\begin{equation}
\label{property-1}
\lim_{d\to\infty}
\Vert w_d(\x)-\kappa(\x)\Vert_{L_1(\X,\lambda)}\,=\,0\quad\mbox{and}\quad
\lim_{d\to\infty}
\lambda(\X^*_\varepsilon\setminus\X^d_\varepsilon)\,=\,0. 
\end{equation}
\end{thm}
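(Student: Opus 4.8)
The plan is to treat the two claims separately. Claim (i) is a strong-duality statement for the primal--dual pair \eqref{chance-sdp}--\eqref{chance-sdp-dual}, which I would establish by exhibiting a strictly feasible (Slater) point for the moment relaxation \eqref{chance-sdp}; conic duality then yields both $\rho^*_d=\rho_d$ and attainment of the dual optimum $(h_d,w_d)$. Claim (ii) then follows almost mechanically from (i), from Theorem \ref{th1-lp} and Theorem \ref{th-abstract}, together with a standard measure-splitting argument for the volume convergence.

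For (i), I would build a strictly feasible triple $(\y,\u,\v)$ as the truncated moment sequences of three measures $(\phi,\nu,\psi)$ chosen so that every moment and localizing matrix in \eqref{chance-sdp} is \emph{positive definite}. Take $\psi=\lambda\otimes\theta$, where $\theta$ is a probability measure with full support on $\A$ (e.g. normalized Lebesgue measure, using that $\A$ has nonempty interior); then $\psi_\x=\lambda$ automatically, and since $\theta$ charges every open subset of $\A$, the matrices $\M_d(\v)$, $\M_d(g_j\,\v)$ and $\M_{d-d'_\ell}(q_\ell\,\v)$ are all $\succ0$. By Lemma \ref{def-tstar}, $\T^*\psi=\psi'_{\x,\o}$ is a fixed positive measure on $\X\times\om$; using Assumption \ref{ass-on-Ma}(iii) (which, as in all the examples, makes each $\mu_\a$ charge every nonempty open subset of $\om$) together with the nonempty interior of $\X\times\om$ and of $\X\times\om\setminus\K$, one checks that $\T^*\psi$ has positive mass on every open subset of $\mathrm{int}(\X\times\om)$. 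Since $\K$ has nonempty interior, I pick a small ball $B\subset\mathrm{int}\,\K$ and let $\phi$ have a positive continuous density supported in $B$ and dominated by $\T^*\psi$; then $\phi\in\mathscr{M}_+(\K)$, $\phi\le\T^*\psi$, and on $B$ one has $g_j>0$, $s_\ell>0$ and $g_{m+1}=-f>0$, so $\M_d(\y)$ and all its localizing matrices are $\succ0$. The slack $\nu:=\T^*\psi-\phi\ge0$ still has positive density on all of $\mathrm{int}(\X\times\om)$, which makes $\M_d(\u)$ and its localizing matrices $\succ0$. The linear equalities of \eqref{chance-sdp} hold by construction: $\y+\u$ are the moments of $\phi+\nu=\T^*\psi$, and by Lemma \ref{T-poly}, $L_{\T^*\psi}(\x^\alpha\o^\beta)=L_\psi(\x^\alpha p_\beta(\a))=L_\v(\x^\alpha p_\beta(\a))$, while $L_\v(\x^\alpha)=\lambda_\alpha$ follows from $\psi_\x=\lambda$. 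Thus $(\y,\u,\v)$ is strictly feasible, Slater's condition holds, and there is no duality gap with dual attainment. (When $\om$ is unbounded, Assumption \ref{ass-on-Ma-2} guarantees the finiteness of the truncated moments used here.)

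For (ii), I would first argue the inclusion. The SOS certificates defining feasibility of \eqref{chance-sdp-dual} force $h_d\ge1$ on $\K$, $h_d\ge0$ on $\X\times\om$, and $w_d-\T h_d\ge0$ on $\X\times\A$ (recalling that $\sum_{\alpha\beta}h_{\alpha\beta}\x^\alpha p_\beta(\a)=\T h_d$ by Lemma \ref{T-poly}); hence $(w_d,h_d)$ is feasible for the infinite-dimensional dual \eqref{chance-lp-dual}. Theorem \ref{th-abstract}, eqn.~\eqref{aux1}, then gives $w_d(\x)\ge\kappa(\x)$ for all $\x\in\X$, so by \eqref{subset-0}, $\X^d_\varepsilon=\{w_d<\varepsilon\}\subset\{\kappa<\varepsilon\}=\X^*_\varepsilon$ for every $d\ge d_{\mathrm{min}}$. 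For the $L_1$ convergence, combine $\int_\X w_d\,d\lambda=\rho^*_d=\rho_d$ (the attained dual value and part (i)) with $\rho=\int_\X\kappa\,d\lambda$ (Theorem \ref{th1-lp}); since $w_d\ge\kappa\ge0$ pointwise,
\[\Vert w_d-\kappa\Vert_{L_1(\X,\lambda)}=\int_\X(w_d-\kappa)\,d\lambda=\rho_d-\rho,\]
which tends to $0$ under the hypothesis $\rho_d\to\rho$.

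Finally, the volume convergence is a routine consequence of this $L_1$ bound. For $\delta>0$, split
\[\X^*_\varepsilon\setminus\X^d_\varepsilon=\{\kappa<\varepsilon\le w_d\}\subset\{\kappa\le\varepsilon-\delta,\ w_d\ge\varepsilon\}\cup\{\varepsilon-\delta<\kappa<\varepsilon\}.\]
On the first set $w_d-\kappa\ge\delta$, so by Markov's inequality its $\lambda$-measure is at most $(\rho_d-\rho)/\delta$; the second set decreases to $\emptyset$ as $\delta\downarrow0$, so its measure can be made arbitrarily small uniformly in $d$. Given $\eta>0$, I first fix $\delta$ to control the second term, then let $d\to\infty$ to kill the first, obtaining $\limsup_d\lambda(\X^*_\varepsilon\setminus\X^d_\varepsilon)\le\eta$ for every $\eta>0$, i.e. the claimed limit. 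The genuine work is concentrated in part (i): the delicate point is to produce a single triple of measures whose truncated moments are simultaneously strictly feasible for \emph{all} the positive-semidefinite blocks of \eqref{chance-sdp}, while exactly meeting the marginal constraint $\psi_\x=\lambda$ and the coupling $\phi\le\T^*\psi$; this is precisely where the four nonempty-interior hypotheses and Assumption \ref{ass-on-Ma}(iii) are used.
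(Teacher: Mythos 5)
Your proof is correct and follows essentially the same route as the paper's: a product-measure Slater point for \eqref{chance-sdp} (the paper simply takes $\psi=\lambda_\A\otimes\lambda$ and $\phi=1_\K\cdot\T^*\psi$, which sidesteps your need to fit a continuous bump under $\T^*\psi$ on a small ball), strong duality with dual attainment, the pointwise bound $w_d(\x)\ge\kappa(\x)$ extracted from dual feasibility, and then the $L_1$ and volume convergence exactly as in Theorem \ref{th-abstract}. No gaps worth flagging; your Markov-inequality splitting for the volume limit is just an explicit rewriting of the convergence-in-measure argument the paper reuses from the proof of Theorem \ref{th-abstract}.
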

A proof is postponed to \S \ref{proof-th4}.
Note that so far, optimal solutions of (\ref{chance-sdp-dual}) provide us
with a hierarchy of inner approximations $\X^d_\varepsilon\subset\X^*_\varepsilon$, $d\geq \replaced{d_{\mathrm{min}}}{d_0}$.
In addition, if the approximation scheme (\ref{chance-sdp}) \deleted{(or (\ref{chance-sdp-dual}))} is such that
$\lim_{d\to\infty}\rho_d=\rho$, then \replaced{Theorem}{Lemma} \ref{th4}
states that the inner approximations $(\X^d_\varepsilon)$ have the additional 
strong asymptotic property (\ref{property-1}) which in turn implies the highly desirable
convergence result (\ref{property-1}). \\

So to obtain (\ref{property-1}) we need to ensure that $\lim_{d\to\infty}\rho_d=\rho$ as $d\to\infty$.

\begin{thm}
\label{th3}
Let Assumption \ref{ass-on-Ma} hold, and if $\om$ is unbounded let
Assumption \ref{ass-on-Ma-2} \replaced{hold as well}{also hold}.
Consider the hierarchy of semidefinite programs (\ref{chance-sdp}) 
with associated monotone sequence of optimal values $(\rho_d)_{d\geq \replaced{d_{\mathrm{min}}}{d_0}}$. Then for each $d\geq \replaced{d_{\mathrm{min}}}{d_0}$ there is an optimal solution
$(\y^d,\u^d,\v^d)$ and 
\[\rho_d\,=\,y^d_{00}\downarrow \phi^*(\K)\,=\,\rho,\quad\mbox{as $d\to\infty$},\]
 where $\phi^*$ is part of an optimal solution $(\phi^*,\psi^*)$ of (\ref{new-primal-lp}).
\end{thm}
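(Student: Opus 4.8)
The plan is to prove the two inequalities $\rho_\infty\geq\rho$ and $\rho_\infty\leq\rho$, where $\rho_\infty:=\lim_{d\to\infty}\rho_d$ exists because, by Proposition \ref{prop: relaxation}, the sequence $(\rho_d)_{d\geq d_{\mathrm{min}}}$ is non-increasing and bounded below by $\rho$. The inequality $\rho_\infty\geq\rho$ is exactly Proposition \ref{prop: relaxation}, so the entire content lies in the reverse inequality $\rho_\infty\leq\rho$, which I would obtain by constructing, in the limit of the relaxations, a genuine feasible pair $(\phi,\psi)$ of the LP \eqref{new-primal-lp} whose value equals $\rho_\infty$. First I would check that for each fixed $d\geq d_{\mathrm{min}}$ the program \eqref{chance-sdp} attains its supremum: the objective $y_{00}$ is linear and the constraints are closed, so it suffices that the feasible set be bounded. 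In the $\v$-block this is immediate, since $\v$ consists of truncated moments of a probability measure on the compact set $\X\times\A$ (note $v_{00}=\lambda_0=1$). Boundedness of the $\y,\u$-blocks then follows from the coupling equalities $L_{\y+\u}(\x^\alpha\o^\beta)=L_\v(\x^\alpha p_\beta(\a))$: their right-hand sides are bounded for each fixed index because $p_\beta\in\R[\a]$ is fixed and $\v$ is bounded, so the nonnegative diagonal entries of $\M_d(\y)$ and $\M_d(\u)$ (whose sum is bounded) are individually bounded, and all remaining entries are then controlled by Cauchy--Schwarz through $\M_d(\y),\M_d(\u)\succeq0$.

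Next I would run the standard diagonal-extraction argument. Since, for each fixed coordinate, the above bounds hold uniformly in all large $d$, one extracts a subsequence along which every coordinate of $(\y^d,\u^d,\v^d)$ converges to limits $(\hat\y,\hat\u,\hat\v)$. Because every fixed moment-matrix and localizing block, as well as each fixed instance of the equalities $L_{\y+\u}(\x^\alpha\o^\beta)=L_\v(\x^\alpha p_\beta(\a))$ and $L_\v(\x^\alpha)=\lambda_\alpha$, is eventually present as $d\to\infty$, passing to the limit shows that $\hat\y,\hat\u,\hat\v$ satisfy $\M_d(\hat\y),\M_d(\hat\u),\M_d(\hat\v)\succeq0$ together with all the localizing constraints (for the $g_j$, for $g_{m+1}=-f$, for the $s_\ell$, and for the $q_\ell$) for every $d$, and the two families of equalities for all $(\alpha,\beta)$.

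Then I would recover measures. On the compact set $\X\times\A$, which carries the Archimedean generators $g_1=M-\Vert\x\Vert^2$ and $q_1=M-\Vert\a\Vert^2$, Putinar's theorem \cite{book1} applies to $\hat\v$ and yields $\psi\in\mathscr{P}(\X\times\A)$ representing it; the equalities $L_{\hat\v}(\x^\alpha)=\lambda_\alpha$ and determinacy of $\lambda$ on the compact set $\X$ force $\psi_\x=\lambda$. For the $\y,\u$-blocks, where $\om$ may be unbounded, I would instead invoke Lemma \ref{lem-carleman}: using $\hat\u\succeq0$ and the coupling equality, $L_{\hat\y}(\o_i^{2k})\leq L_{\hat\y+\hat\u}(\o_i^{2k})=\int_{\X\times\A}p_{2k e_i}(\a)\,d\psi=\int_{\X\times\A}\bigl(\int_\om\o_i^{2k}\,d\mu_\a\bigr)\,d\psi$, and Assumption \ref{ass-on-Ma-2} bounds the inner integral uniformly by $\gamma\,(2k)!/c^{2k}$; Stirling's formula then gives $\sum_k L_{\hat\y}(\o_i^{2k})^{-1/2k}=+\infty$, so Carleman's condition holds (the same estimate works for $\hat\u$). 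Hence $\hat\y$ and $\hat\u$ admit unique representing measures $\phi$ and $\nu$ on $\X\times\om$, and, moment determinacy making the localizing constraints effective on the non-compact domain, $\phi$ is supported on $\K$ via $g_{m+1}=-f\geq0$ while $\phi,\nu$ remain supported on $\X\times\om$.

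Finally I would assemble feasibility and conclude. Since $\langle1,\T^*\psi\rangle=\langle\T1,\psi\rangle=1$, the measure $\phi+\nu$ is a probability measure whose moments are, by construction, $L_{\hat\y+\hat\u}(\x^\alpha\o^\beta)=\int\x^\alpha p_\beta(\a)\,d\psi$; Lemma \ref{lem-handle} then identifies $\phi+\nu=\T^*\psi$, whence $\phi=\T^*\psi-\nu\leq\T^*\psi$ with $\phi\in\mathscr{M}_+(\K)$ and $\psi_\x=\lambda$, i.e.\ $(\phi,\psi)$ is feasible for \eqref{new-primal-lp}. Therefore $\rho\geq\phi(\K)=L_{\hat\y}(1)=\hat y_{00}=\lim_d y^d_{00}=\rho_\infty$, which together with $\rho_\infty\geq\rho$ gives $\rho_\infty=\rho$ and exhibits $(\phi,\psi)$ as an optimal solution of the LP, consistently with Theorem \ref{th1-lp}. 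I expect the main obstacle to be precisely the non-compactness of $\om$: Putinar's theorem is unavailable on the $\o$-variables, so the crux is verifying Carleman's condition for the limiting sequences $\hat\y,\hat\u$ (which is exactly where Assumption \ref{ass-on-Ma-2} is indispensable) and arguing that, on a non-compact domain, the localizing constraints still pin down the supports, via moment determinacy.
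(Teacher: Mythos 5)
Your proposal is correct and follows essentially the same route as the paper's proof: boundedness and attainment for each relaxation \eqref{chance-sdp}, coordinatewise extraction of a limiting pseudo-moment triple, Carleman's condition (via Assumption \ref{ass-on-Ma-2}) to obtain determinate representing measures in the non-compact $\o$-directions, Putinar's theorem on the compact blocks, support localization on $\K$ from the localizing constraints plus determinacy (which the paper makes precise by invoking \cite[Theorem 2.2]{lass-tams}), and Lemma \ref{lem-handle} to verify feasibility for \eqref{new-primal-lp}. The only slip is your justification of boundedness of the $\v$-block --- a feasible $\v$ need not be a genuine moment vector, so one should argue instead from $v_{00}=1$ and the ball-constraint localizing matrices $\M_{d-1}(g_1\,\v),\M_{d-1}(q_1\,\v)\succeq0$, exactly as for the $\y,\u$-blocks --- but this does not affect the rest of the argument.
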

A proof is postponed to \S \ref{proof-th3}.

\subsection{Accelerating convergence via Stokes}

In previous works of a similar flavor but for volume computation in \cite{sirev} and \cite{lass-ieee,gaussian}, it was observed that the convergence 
$\rho_d\to\rho$ was rather slow. In our framework, by inspection of the dual (\ref{chance-lp-dual}), a
potential slow convergence may arise as one tries to approximate from above a discontinuous function
(the indicator function $1_\K$ of a compact set $\K$) by polynomials, and therefore one is faced with an annoying Gibb's phenomenon. 
The trick proposed in \cite{sirev} resulted in a significant acceleration of the convergence
but at the price of \replaced{losing}{loosing} its monotonicity (a highly desirable feature).  This motivated the other strategy proposed in \cite{lass-ieee,gaussian}, based on Stokes' theorem, which also resulted in a significantly faster convergence, but this time without  \replaced{losing}{loosing} its monotonicity.

In this section we provide a means to accelerate the convergence 
$\rho_d\to\rho$ in Theorem \ref{th3}, also based on Stokes' theorem applied to
the optimal solution $\phi^*$ of (\ref{new-primal-lp}). However, its implementation is 
much more complicated than in \cite{lass-ieee,gaussian} as it requires introducing
an additional measure in the LP (\ref{new-primal-lp}).

It works when the probability measures $(\mu_\a)_{\a\in\A}$, satisfy some additional property: For all $\a\in\A$, either, 

$\bullet$ $d\mu_\a(\o)=s(\o,\a)$ for some polynomial $s\in\R[\o,\a]$, or 

$\bullet$ $d\mu_\a(\o)=q_0(\a)\,s_1(\o,\a)\,\exp(s_2(\o,\a))d\o$, where 
$q_0$ is a rational function and
$\o\mapsto s_i(\o,\a)$ belongs to $\R[\o]$, $i=1,2$. In addition, for every fixed $\a$,
and $i=1,\ldots,p$,
$\partial s_2(\o,\a)/\partial \o_i$ is a rational function of $\o$.

We detail the procedure for the case where $\om=\R$,
$\a=(a,\sigma)\in \A:=[\underline{a},\overline{a}]\times [\os,\ss]\subset\R^2$,
and
\[d\mu_\a(\o)\,=\,\frac{1}{\sqrt{2\pi}\sigma}\,\exp(-\frac{(\o-a)^2}{2\sigma^2})\,d\o,\quad \a=(a,\sigma)\in\A,\]
i.e., $\mathscr{M}_\a$ is the family of all possible mixtures of Gaussian probability measures on $\R$ with mean $a\in [\underline{a},\overline{a}]$ and standard deviation $\sigma\in[\os,\ss]$. Recall that 
$\K_\x=\{\o\in\R: f(\x,\o)\leq0\,\}$.
For every $\a\in\A$, an\deleted{d} extended version of 
Stokes' theorem in \cite[Lemma 3.1, p. 151]{gaussian}, yields:

\[\int_{\K_\x}\frac{\partial\,(\o^\beta\,f(\x,\o)\,\exp(\frac{-(\o-a)^2}{2\sigma^2}))}{\partial \o}\,d\o
=0,\quad \beta=0,1,\ldots\]
for every $\x\in\X$. That is:
\begin{equation}
\label{aux-stokes}
\int_{\K_\x}q_\beta(\x,\o,\a)\,d\mu_{\a}(\o)\,=\,0,\quad \forall\x\in\X,\:\forall \beta=0,1,\ldots,
\end{equation}
where $q_\beta\in\R[\x,\o,\a]$ reads:
\[q_\beta(\x,\o,\a):=
\frac{\sigma^2\partial\,(\o^\beta\,f(\x,\o))}{\partial \o}-\o^\beta f(\x,\o)(\o-a).\]
This  in turn implies that for every $\a\in\A$,
\begin{equation}
\label{aux11}
\int_\X\x^\alpha\left(\a^\gamma\int_{\K_\x}q_\beta(\x,\o,\a)\,d\mu_{\a}(\o)\right)\,d\lambda(\x)\,=0,\end{equation}
for all $\alpha\in\N^n$, $\gamma\in\N^2$,  and $\beta=0,1,\ldots$. In particular,
with $\a(\x)$ as in Lemma \ref{mu-star}:
\[\int_\X\x^\alpha\,\a(\x)^\gamma\left(\int_{\K_\x}q_\beta(\x,\o,\a(\x))\,d\mu_{\a(\x)}(\o)\right)\,d\lambda(\x)\,=0,\]
or equivalently:
\begin{equation}
\label{stokes-phi-star}
\int_\K \x^\alpha\,\a(\x)^\gamma\,q_\beta(\x,\o,\a(\x))\,d\phi^*(\x,\o)\,=\,0,
\end{equation}
for all $\alpha\in\N^n$, $\gamma\in\N^2$, $\beta=0,1,\ldots$.

Therefore in the infinite-dimensional LP (\ref{new-primal-lp}) we may add the linear ``generalized" moment constraints
(\ref{stokes-phi-star}) because they are satisfied at an optimal solution
$\phi^*$. However, the function $(\x,\o)\mapsto \added{\a(\x)^\gamma}q_\beta(\x,\o,\a(\x))$ is {\em not} a 
polynomial and these constraints cannot be implemented directly. 

To overcome this problem, \added{we} introduce 
an additional  measure $\nu$ on $\K\times\A$ and impose that  its marginal $\nu_{\x,\o}$ on $\K$ 
is $\phi$ and its marginal $\nu_{\x,\a}$ on $\X\times\A$ is dominated by $\psi$. That is
$\nu_{\x,\o}=\phi$ and $\nu_{\x,\a}\leq\psi$. \replaced{On this newly introduced measure we now can impose}{We also need introduce} the (Stokes) moment constraints:
\begin{equation}
\label{add-nu}
\int_{\X\times\A\times\om}\x^\alpha\,\a^\gamma\,q_\beta(\x,\o,\a)\,d\nu(\x,\o,\a)\,=\,0,\quad
\forall (\alpha,\beta,\gamma)\in\N^n\times\N^p\times\N^2.
\end{equation} 
Recall that at an optimal solution 
$(\phi^*,\psi^*)$ of (\ref{new-primal-lp}),
$\hat{\psi}^*(d\a\vert\x)=\delta_{\a(\x)}$ for all $\x\in\X$ and so every feasible solution of the form
$(\phi^*,\psi^*,\nu)$ (hence with $\nu_{\x,\o}=\phi^*$ and $\nu_{\x,\a}\leq\psi^*$)
satisfies $\hat{\nu}_{\x,\a}(d\a\vert\x)=\delta_{\a(\x)}$ for all $\x\in{\rm supp}(\nu_\x)$, 
i.e., for all $\x\in{\rm supp}(\phi^*_\x)$.\footnote{This is because by compactness of $\X$ and $\A$, for all $\alpha,\gamma$,
$\int\x^\alpha\a^\gamma\,d\nu_{\x,\a}(\x,\a)=\int \x^\alpha\,\a(\x)^\gamma \eta(\x,\a(\x))\lambda(d\x)$ for some nonnegative measurable function $\eta\leq1$, and so $\nu_\x(d\x)=\eta(\x,\a(\x))\lambda(d\x)$. This in turn implies that for every $\gamma$ and almost all $\x\in\X$, $\int \a^\gamma\hat{\nu}_{\x,\a}(d\a\vert\x)=\a(\x)^\gamma=\int \a^\gamma \delta_{\a(\x)}(d\a)$. Hence $\hat{\nu}_{\x,\a}(d\a\vert\x)=\delta_{\a(\x)}(d\a)$ for all $\x\in{\rm supp}(\nu_\x)$.}
Disintegrating $\nu$ as $\hat{\nu}(d\a\vert \x,\o)\,d\nu_{\x,\o}(\x,\o)$ 
yields
\begin{eqnarray*}
0&=&\int_{\K}\x^\alpha\left( \int_\A \,\a^\gamma\,q_\beta(\x,\o,\a)\,\hat{\nu}(d\a\vert \x,\o)\right)\,d\nu_{\x,\o}(\x,\o),\\
&=&\int_{\K}\x^\alpha \,\a(\x)^\gamma\,q_\beta(\x,\o,\a(\x))\,d\nu_{\x,\o}(\x,\o),\\
&=&\int_{\K}\x^\alpha \,\a(\x)^\gamma\,q_\beta(\x,\o,\a(\x))\,d\phi^*(\x,\o),\quad
\forall \alpha\in\N^n,\,\gamma\in\N^2,\:\beta\in\N,
\end{eqnarray*}
which is (\ref{stokes-phi-star}). 
So in the semidefinite relaxation (\ref{chance-sdp}), we introduce the additional vector 
$\z^1=(z^1_{\alpha,\beta,\gamma})$ and $\z^2=(z^2_{\alpha,\gamma})$, 
$\alpha\in\N^n$, $\beta\in\N^p$, $\gamma\in\N^t$,
(ideally the respective moments of the measure $\nu$  and $\psi^*-\nu_{\x,\a}$ where
$\nu$ is described above), and the constraints
\begin{eqnarray*}
L_{\z^1}(\x^\alpha\,\o^\beta)&=&L_\y(\x^\alpha\,\o^\beta),\quad \forall\: \vert\alpha\vert+\vert\beta\vert\leq 2d,\\
L_{\z^1}(\x^\alpha\,\a^\gamma)+L_{\z^2}(\x^\alpha\,\a^\gamma)&=&L_\v(\x^\alpha\,\a^\gamma),
\quad\forall \:\vert\alpha\vert+\vert\gamma\vert\leq 2d,\\
L_{\z^1}(\x^\alpha\,\a^\gamma\,q_\beta(\x,\o\,\a))&=&0
\quad\forall \:\vert\alpha\vert+\vert\gamma\vert+{\rm deg}(q_\beta)\leq 2d.
\end{eqnarray*}
Of course, for the same index $d\in\N$, the resulting semidefinite relaxation is more computationally demanding as it now includes moments of the measure $\nu$
on $\X\times\A\times\om$ (whereas in (\ref{chance-sdp}) we only have moments of measures 
on $\X\times\om$ and $\X\times\A$). However, being more constrained its optimal value can be significantly smaller and the resulting convergence
$\rho_d\to\rho$ as $d$ increases, can be expected to be much faster.

Actually, in this framework of mixtures of Gaussian measures, one may also replace
(\ref{new-primal-lp}) with:
\begin{equation}
\label{new-primal-lp-stokes}
\begin{array}{rl}
\rho=\displaystyle\sup_{\phi,\psi\geq0}&
\{\,\phi(\K\times\A): \phi_{\x,\o}\leq\T^*\psi;\quad\phi_{\x,\a}\leq\psi;\quad\psi_\x\,=\,\lambda,\\
&\displaystyle\int_{\K\times\A} \x^\alpha\,\a^\gamma\,q_\beta(\x,\o,\a)\,d\phi(\x,\a,\o)=0,\\
&\\&\forall\alpha\in\N^n,\,\gamma\in \N^2,\,\beta\in\N^p;\\
&\phi\in \mathscr{M}_+(\K\times\A),\:\psi\in\mathscr{P}(\X\times\A)\,\},\end{array}
\end{equation}
where now $\phi$ is a measure on $\K\times\A$ (instead of $\K$ before). The dual of (\ref{new-primal-lp-stokes}) reads:
\begin{equation}
\label{dual-lp-stokes}
\begin{array}{rl}
\rho^*=\displaystyle\inf_{w,h,\theta,s}&\{\,\displaystyle\int_\X w(\x)\,d\lambda(\x):\\
&h(\x,\o)+\theta(\x,\a)\,\\
&+\displaystyle\sum_{\alpha,\beta,\gamma}s_{\alpha\gamma\beta}\,\x^\alpha\a^\gamma\,q_\beta(\x,\a,\o)\,\geq\,1,\quad\mbox{on $\K\times\A$}\\
&w(\x)-Th(\x,\a)-\theta(\x,\a)\,\geq\,0,\quad\mbox{on $\X\times\A$}\\
&\mbox{$h\geq0$ on $\X\times\om$};\quad \mbox{$\theta\geq0$ on $\X\times\A$},\\
&w\in\R[\x],\:h\in\R[\x,\o],\:\theta\in\R[\x,\a],\:s\in\R[\x,\a,\o]\:\}.
\end{array}
\end{equation}
In particular, let $(w,h,\theta,s)$ be an arbitrary  feasible solution of (\ref{dual-lp-stokes}).
Then for  every $\x\in\X$,
and with $\a(\x)$ as in Lemma \ref{mu-star}, 
\begin{eqnarray*}
w(\x)&\geq&Th(\x,\a(\x))+\theta(\x,\a(\x))=\int_\om h(\x,\o)d\mu_{\a(\x)}(d\o)+\theta(\x,\a(\x))\\
&=&\int_\om (h(\x,\o)+\theta(\x,\a(\x)))\,d\mu_{\a(\x)}(d\o)\\
&\geq&\int_{\K_\x}(h(\x,\o)+\theta(\x,\a(\x)))\,d\mu_{\a(\x)}(d\o)\\
&=&\displaystyle\int_{\K_\x} [\,\underbrace{h(\x,\o)+\theta(\x,\a(\x))+\displaystyle\sum_{\alpha,\beta,\gamma}s_{\alpha\beta\gamma}\,
\x^\alpha\a(\x)^\gamma\,q_\beta(\x,\a(\x),\o)}_{\mbox{$\geq1$ on $\K_\x$}}\\
&& -\displaystyle\sum_{\alpha,\beta,\gamma}s_{\alpha\beta\gamma}\,
\x^\alpha\a(\x)^\gamma\,q_\beta(\x,\a(\x),\o)\,]\,d\mu_{\a(\x)}(d\o)\\
&\geq &\int_{\K_\x}d\mu_{\a(\x)}-\displaystyle\sum_{\alpha,\beta,\gamma}s_{\alpha\beta\gamma}\,
\x^\alpha\a(\x)^\gamma\,\underbrace{\int_{\K_\x}q_\beta(\x,\a(\x),\o)\,d\mu_{\a(\x)}(d\o)}_{\mbox{$=0$ by (\ref{aux-stokes})}}\\
&\geq&\kappa(\x).
\end{eqnarray*}

One may show that in (\ref{new-primal-lp-stokes}) and (\ref{dual-lp-stokes}), 
\[\rho\,=\,\rho^*\,=\,\int_\X \kappa(\x)\,d\lambda(\x),\]
and (\ref{property-1}) in  Theorem \ref{th4} also holds for an arbitrary minimizing sequence
of (\ref{dual-lp-stokes}).

The hierarchy of associated SDP-relaxations 
(i.e. the analogues for \eqref{new-primal-lp-stokes} of 
\eqref{chance-sdp} for \eqref{new-primal-lp}) and
its dual hierarchy are obtained in an obvious manner by truncation of the infinite sequences. 
At step $d$ of the latter hierarchy we also obtain a polynomial $w_d$
with same properties as in Theorem \ref{th4}

\subsection{Numerical experiments}

In the illustrative numerical experiments described below we have
restricted to mixtures of univariate Gaussian variables $\mu_\a$
(hence with $\om=\R$ and $\a=({\rm mean,deviation})\in\A\subset\R^2$).
To implement the semidefinite relaxations \eqref{new-primal-lp-stokes}
we have used the GloptiPoly software \cite{gloptipoly} dedicated to solving the Generalized Moment Problem. The resulting SDPs are solved using version 8.1 of Mosek \cite{mosek}. 

We discuss three examples chosen to a) illustrate the effect (and efficiency) of Stokes constraints, b) compare the approximations with the real feasible set 
$\X^*_\varepsilon$ in \eqref{new-set} (approximated with intensive simulations), and c) show the behavior of the approximations for different violation probabilities. 

\subsection*{Approximations with and without Stokes}
In order to illustrate the difference in quality  of the approximation of $\X^*_\varepsilon$ when using or not using Stokes constraints, consider the example where $\X=[-1,1]$, $f(\x,\o)=\o-\x$, $\A = [-0.1,0.1]\times[0.8,1]$, i.e., we consider univariate Gaussian measures with mean approximately $0$ and deviation slightly less than $1$.
For every fixed $\x$, due to the simple expression of 
$f$ we can express ${\rm Prob}_{\mu_\a} (\{\o\in\om: g(\x,\o)<0\})$ as an analytic expression in $\a$. It is hence relatively easy to obtain a good estimation of 
$\rho(\x):=\min_{\a\in\A}{\rm Prob}_{\mu_\a} (\{\o\in\om: g(\x,\o)<0\})=1-\kappa(\x)$ 
by sampling over $\a$ and taking the minimum. In Figure \ref{fig: 1d} is displayed $\x\mapsto \rho(\x)$ in black and two different approximations $w_d$ computed for relaxation orders $d=8$ in blue and $d=12$ in red. The dashed lines are the polynomials corresponding to problem formulations including Stokes constraints.

\begin{figure}[ht]
\includegraphics[width=0.6\textwidth]{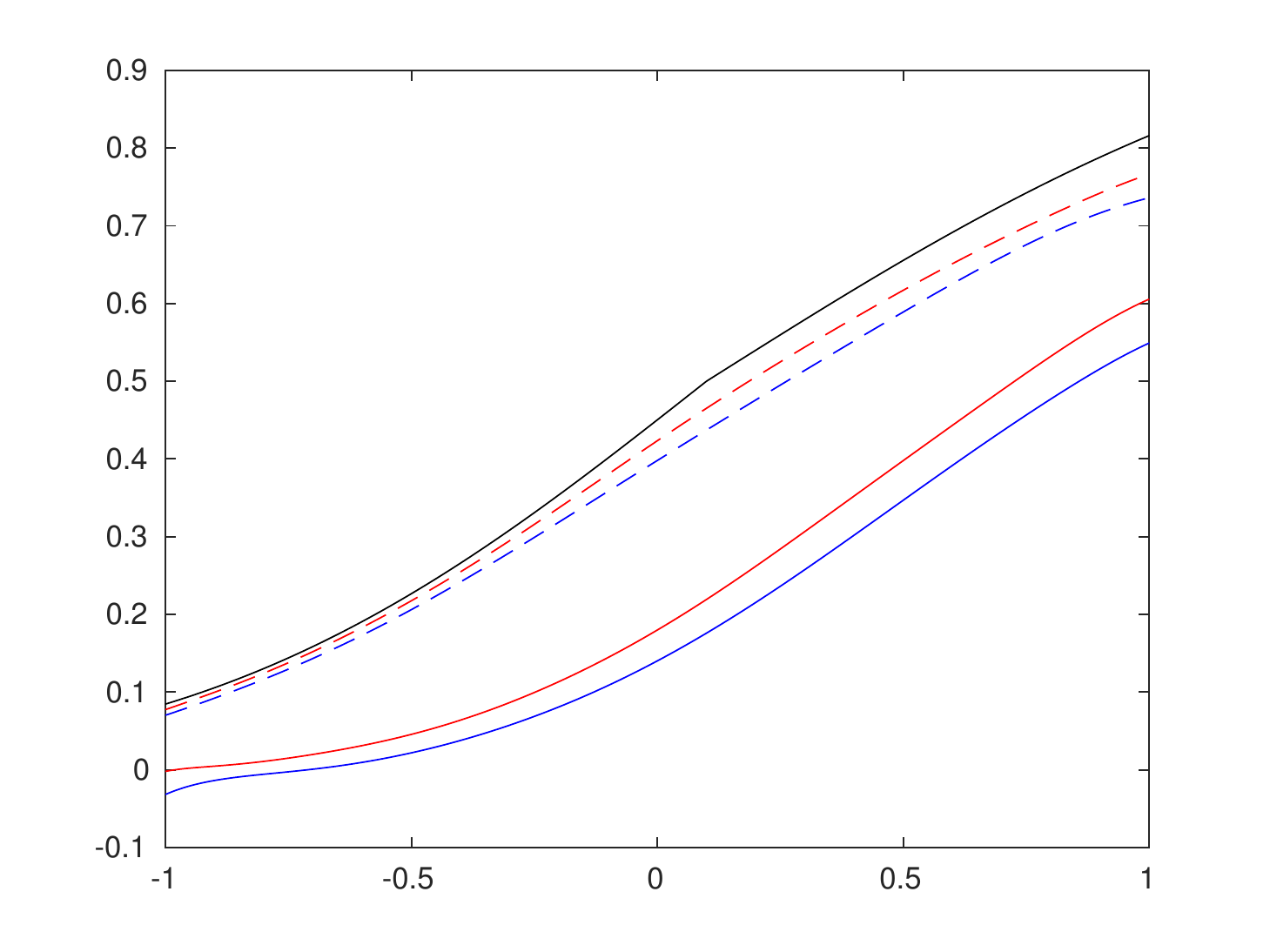}
\caption{Approximation of $\rho(\x)$ (black) by polynomials $1-w_8(\x)$ (blue) and $1-w_{12}(\x)$ (red), dashed/solid lines correspond to with/without Stokes constraints \label{fig: 1d}}
\end{figure}

As a first remark observe that, in accordance with the theoretic results, all approximations are underestimators of $\rho(\x)$. However, the approximations $(1-w_d)$ computed with Stokes constraints are much closer to $\rho$ than the ones computed without. The former approximations are particularly close to $\rho$ for significant values of violation probability, i.e., for small probabilities on the vertical axis. For higher probabilities they degrade (but are still quite good). This can be due to the non-differentiability of 
$\rho$ at $\x=0.1$. In order to display $\X^*_\varepsilon$ and 
its  $\X^d_\varepsilon:=\{\x\in\X: w_d(\x)\leq\varepsilon\}$, e.g., for a violation probability of 
$30\%$ ($\varepsilon=0.3$) one looks at the sets $\{\x\in\X: \rho(\x) \geq0.7\}$ 
and $\{\x\in\X: 1-w_d(\x)\geq0.7\}$ with $w_d$ an optimal solution of the dual for the
analogue of the step-$d$ relaxation  of \eqref{new-primal-lp-stokes}.
This yields approximately that the interval $[0.62,1]$ is the true feasible set. With Stokes, the approximations $w_{8}$ and $w_{12}$ yield the respective intervals $[0.85,1]$ and  $[0.75,1]$
while the approximations without Stokes provide an empty interval.

\subsection*{Inner approximations from various relaxations}
As seen in the previous example, Stokes constraints are essential for the performance of our approach. In this section we therefore only report results using these additional constraints. 
In the second illustrative example, $\X = [-1,1]^2$, $f(\x,\o)= 2\o\,\x_2^2 - 2\o\,\x_1^2 - 1$ and mean and deviation as in the example before in an environment of $0$ and $0.9$ respectively. In Figure \ref{fig: 2d} we plot the feasible set $\X^*_\varepsilon$ and its approximations $(\X^d_\varepsilon)$ for a violation level of $10\%$ ($\varepsilon=0.1$).

The feasible set is approximated as follows. We discretize $\X$  into $200$ and $\A$ into $100$ steps in each direction respectively. For each point $\x$ and each combination of parameters $\a$ we draw $1000$ realizations of $\o$ from the normal distribution described by $\a$. The point $\x$ is considered to be feasible whenever  for each $\a$, $f(\x,\o)$ is positive for at least $900$ out of the $1000$ realizations of $\o$. This simulation takes about $8600$ seconds (without the authors claiming to be experts for Monte Carlo simulations) whereas the approximations for $d = 8,10,12$ take $5$, $43$, and $482$ seconds respectively.

\begin{figure}[ht]
\includegraphics[width=0.6\textwidth]{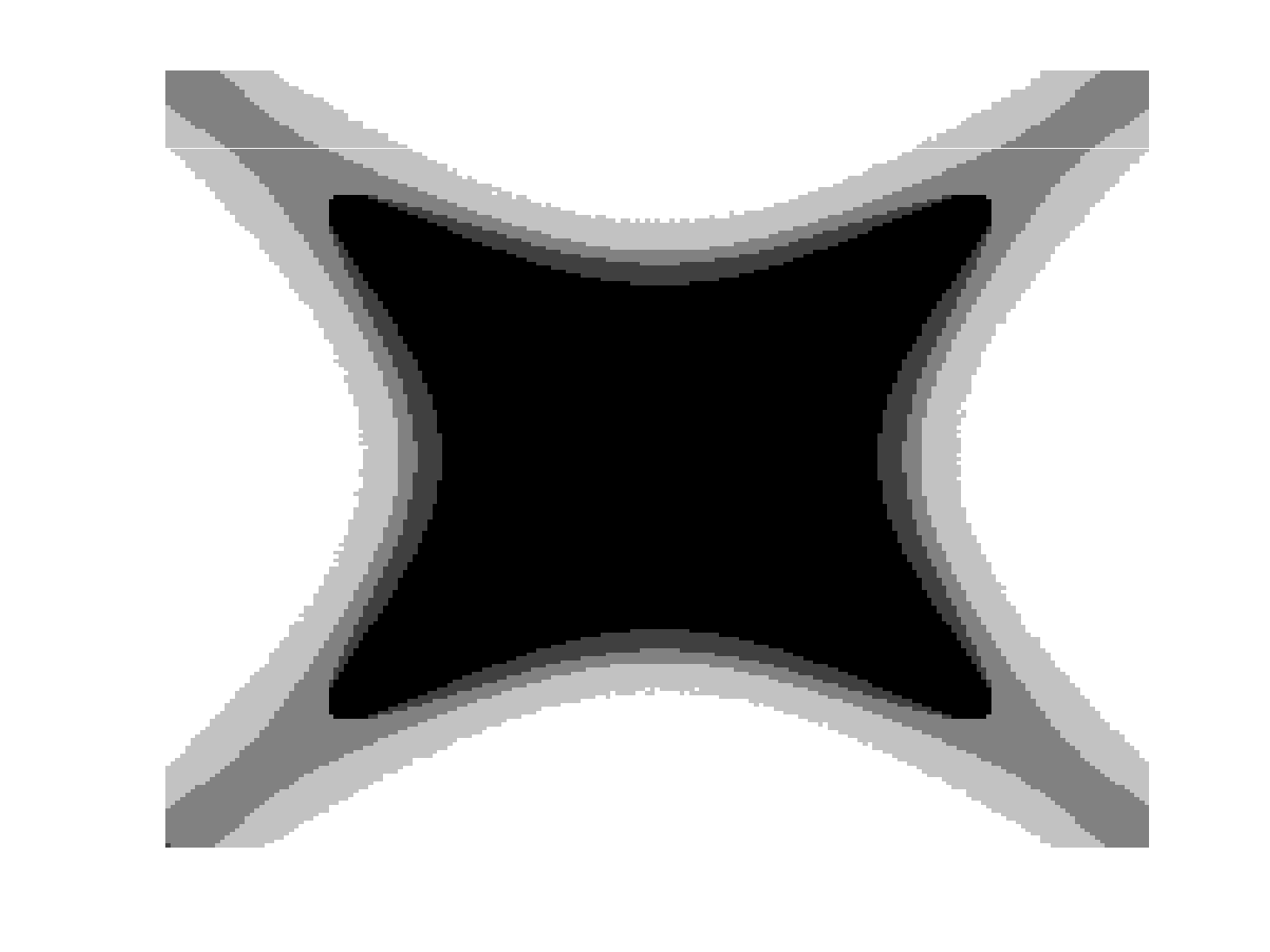}
\caption{Monte Carlo simulation (light grey) of $\X^*_\varepsilon$ and inner approximations 
$\X^d_\varepsilon$ for $d = 8,10,12$, in decreasing intensity \label{fig: 2d}}
\end{figure}

Inspection of Figure \ref{fig: 2d} reveals that the feasible set $\X^*_\varepsilon$  is non-convex. Already the lowest approximation $\X^8_\varepsilon$ (black) is able to capture this behavior. The next 
approximation $\X^{10}_\varepsilon$ (dark grey) is already a bit larger and
$\X^{12}_\varepsilon$ (medium grey) captures a significant part of $\X^*_\varepsilon$ ($\approx 74\%$). 
Its computation time is $18$ times faster than the the one required for the Monte Carlo simulation
of $\X^*_\varepsilon$. In addition, and in contrast to the approximation via Monte Carlo, 
$\X^d_\varepsilon$ is guaranteed to be inside the true feasible set.

\subsection*{Inner approximations with different violation levels}
In the third example, $\X=[-1,1]^3$, $\om=\R$, $f(\x,\o) = -2\o\,x_1^2+2\o\,x_2^2-2\o\,x_3^2-1$. 
We compute the inner approximations $\X^d_\varepsilon$ for  $d=8,10,12$.
To compute the Monte Carlo approximation of $\X^*_\varepsilon$ in a reasonable time, we fix the mean of the distribution to $0$ and  the standard deviation $\sigma$ is taken in the interval $\A=[0.4,0.6]$. For Monte Carlo we discretize $\X$ and $\A$ in $100$ steps in each direction and draw again $1000$ realizations of $\o$ for each point and each $\sigma$. This simulation takes about $2277$ seconds. In the first example we have already seen that the polynomial approximations $w_d$ are quite good for large violation probabilities. In Table \ref{tab: 3d} we compare the ``volume" of our approximations against the Monte Carlo simulation, i.e., the ratio of the number of points admissible for our approximations over the number of points admissible in Monte Carlo. As the polynomial approximations are inner approximations, we expect the ratii to be less than one 
(assuming that Monte Carlo is accurate). 

\begin{table}
\renewcommand{\arraystretch}{1.5}
\begin{tabular}{|l|rrrrr|}
\hline
$(r,\mbox{time})\backslash\varepsilon$	& $50\%$		&$25\%$		& $12.5\%$	& $6.25\%$	& $3.125\%$\\
\hline
$\hspace{5pt}8$ (\hspace{3pt}30s)		& $96.94\%$	& $83.07\%$	& $69.70\%$	& $22.72\%$	& $0\%$\\
$10$ (107s)	& $99.91\%$	& $86.70\%$	& $73.21\%$	& $73.79\%$& $2.48\%$\\
$12$ (633s)	& $100.0\%$	& $90.13\%$	& $79.94\%$	& $61.31\%$& $27.98\%$\\
\hline
\end{tabular}
\vspace{6pt}
\caption{Polynomial approximations vs Monte Carlo simulation.}
\label{tab: 3d}
\end{table}

Again the polynomial approximations $(w_d(x_1,x_2,x_3))$ are computed significantly faster than the Monte Carlo approximation $\rho(x_1,x_2,x_3)$. As in the first example, for large
$\varepsilon$ the approximations are pretty exact. However, 
for all relaxation orders $d$ the quality of approximation decreases with $\varepsilon$,
and eventually $\X^8_{0.03125}=\emptyset$.
However we should not forget that good approximations with small $\varepsilon$ are difficult to achieve in any case. Therefore it is quite interesting that we can retrieve almost $30\%$ of 
$\X^*_{0.03125}$ with $\X^{12}_{0.03125}$ and using moments up to order $12$ only.

\section{Extensions}

With some {\em ad-hoc} adjustments, the framework presented in this paper can be extended
to consider problems with:

 $\bullet$ only first- and second-order moments knowledge (no information about the distributions contributing to the mixture), and 
 
 $\bullet$ distributionally robust {\em joint} chance-constraints.
 
\subsection{Modeling with only first and second order moments}
\label{newsection}
As mentioned in Remark \ref{mu-sigma}, another possible and related ambiguity set is to consider the family $\mathscr{M}_\a$ of measures on $\om$ whose only first and second-order moments $\a=(\bm,\Sigma)$ belong to some prescribed set $\A$. 
The approach described in this paper also works with the following modifications.
 	
Suppose that $\o$ follows some unknown distribution on $\om\subset\R^p$ whose first and second order moments $\a=(\boldsymbol{m},\Sigma)\in \A$, e.g. 
$\A=[\underline{\boldsymbol{m}},\overline{\boldsymbol{m}}]\times \Theta$ where
$\Theta:=\{\Sigma:\underline{\delta}I\preceq \Sigma\preceq\overline{\delta}I\}$, for some $\underline{\delta}>0$. Then $\A$ is basic semi-algebraic set in $\R^{p(p+3)/2}$
defined by $4p$ polynomial inequalities. For instance, with 
\[\boldsymbol{m}=(m_1,m_2);\quad \Sigma=\left[\begin{array}{cc}\sigma_{11} &\sigma_{12}\\
\sigma_{12}&\sigma_{22}\end{array}\right],\]
\[\begin{array}{rl}\A= \{&\underline{m}_i\leq m_i\leq\overline{m}_i,\:i=1,2;\quad
2\underline{\delta} \leq\sigma_{11}+\sigma_{22}\leq 2\overline{\delta};\\
&(\overline{\delta}-\sigma_{11})(\overline{\delta}-\sigma_{22})-\sigma_{12}^2\geq0;\:
(\sigma_{11}-\underline{\delta})(\sigma_{22}-\underline{\delta})-\sigma_{12}^2\geq0\}.\end{array}\]
The infinite-dimensional LP \eqref{new-primal-lp} now becomes:
\[\begin{array}{rl}
\rho=\displaystyle\sup_{\phi,\nu,\mu,\psi}&\{\:\phi(\K):\:
\phi+\nu\,=\,\mu;\quad \mu_\x=\psi_\x=\lambda;\\
&\displaystyle
\int \x^\alpha\,\o_i\,d\phi+\int \x^\alpha\,\o_i\,d\nu=
\int \x^\alpha m_i\,d\psi,\quad \alpha\in\N^n,\:i=1,\ldots,p\\
&\displaystyle\int \x^\alpha\,\o_i\o_j\,d\phi+\int \x^\alpha\,\o_i\o_j\,d\nu=\int \x^\alpha \sigma_{ij}\,d\psi,\quad \alpha\in\N^n,\:1\leq i<j\leq p\\
&\displaystyle\phi\in\mathscr{\M}_+(\K),\psi,\mu\in\mathscr{\M}_+(\X\times\om),
\psi\in\mathscr{\M}_+(\X\times\A)\,\}.
\end{array}\]
Then semidefinite relaxations analogues of \eqref{chance-sdp} are defined in the obvious way
and their associated monotone sequence of optimal values $(\rho_d)_{d\in\N}$ converges to $\rho$ as $d$ increases. As for \eqref{chance-sdp-dual}, from an optimal solution of their dual one provides inner approximation of $(\X^d_\varepsilon)_{d\in\N}$ of $\X^*_\varepsilon$ and analogues of Theorem \ref{th4} and \ref{th3} also hold.

\subsection{Joint chance-constraints}

The case of {\em joint} chance-constraints, i.e., when several probabilistic constraints 
\begin{equation}
\label{joint-chance}
{\rm Prob}_\mu(f_j(\x,\o) >0,\:j=1,\ldots s_f)\,>\,1-\varepsilon,\end{equation}
are considered jointly, is in general significantly more complicated than its relaxation 
which considers them individually, i.e.,
\[{\rm Prob}_\mu(f_j(\x,\o) >0)\,>\,1-\varepsilon,\quad j=1,\ldots,s_f.\]
For instance, tractable formulations valid for individual chance-constraints
may not be valid any more for joint chance-constraints.\\

We next show that  joint chance-constraints (\ref{joint-chance}) can be modelled in our framework, relatively easily. Instead of the set $\K$ in (\ref{set-k}) we now consider the sets:
\begin{eqnarray}
\label{joint-set-k-j}
\K^j\,&:=&\{\,(\x,\o)\in \X\times\om:\:f_j(\x,\o)\,\leq\,0\,\},\quad j=1,\ldots,s_f.\\
\label{joint-set-kx-j}
\K_\x^j&:=&\{\,\o\in \om:\:(\x,\o)\,\in\,\K^j \,\},\quad j=1,\ldots,s_f;\quad\forall\x\in\X\\
\label{joint-set-k}
\K&:=&\{\,(\x,\o)\in \X\times\om:\:(\x,\o)\,\in\,\bigcup_{j=1}^{s_f}\K^j \,\},\\
\label{joint-set-kx}
\K_\x&:=&\{\,\o\in \om:\:(\x,\o)\,\in\,\bigcup_{j=1}^{s_f}\K_\x^j \,\},\quad\forall\x\in\X
\end{eqnarray}

All results of \S \ref{lp-formulation}, i.e., Theorem \ref{th1-lp} and Theorem \ref{th-abstract},
remain valid with now $\K$ and $\K_\x$, $\x\in\X$,
as in (\ref{joint-set-k}) and (\ref{joint-set-kx}) respectively. Indeed Lemma \ref{mu-star}
remains valid with $\K_\x$ as (\ref{joint-set-kx}). (In particular we still have $\mu_\a(\partial\K_\x)=0$ for all $\a\in\A$, as now the boundary 
$\partial\K_\x$ is contained in a finite union of zero sets of polynomials.)

What is not obvious is how to define the analogues of the semidefinite relaxations
(\ref{new-primal-lp}) because $\K$ is {\em not} a {\em basic} semi-algebraic set any more. It is a finite union 
$\bigcup_{j=1}^{s_f}\K^j$ of basic semi-algebraic sets with overlaps.\\

The analogue of the infinite-dimensional LP (\ref{new-primal-lp}) reads:
\begin{equation}
\label{joint-primal-lp}
\begin{array}{rl}
\hat{\rho}=\displaystyle\sup_{\phi_j,\psi\geq0}&
\{\,\displaystyle\sum_{j=1}^{s_f}\int_{\K^j}d\phi_j: \sum_{j=1}^{s_f}\phi^j\leq\T^*\psi;\quad\psi_\x\,=\,\lambda,\\
&\phi^j\in \mathscr{M}_+(\K^j),\:\psi\in\mathscr{P}(\X\times\A)\,\},\end{array}
\end{equation}
where $\T^*$ is defined in Lemma \ref{def-tstar}. It is important to emphasize that even though the sets $\K^j$ overlap, we do {\em not} require that the measures $\phi_j$ are mutually singular.

The dual of (\ref{joint-primal-lp}) reads:
\begin{equation}
\label{joint-lp-dual}
\begin{array}{rl}
\hat{\rho}^*\,=\,\displaystyle\inf_{h,w}&\{\displaystyle\int_\X w\,d\lambda:\quad h(\x,\o)\,\geq\,1\quad\mbox{on $\K^j$},\:j=1,\ldots,s_f\\
&w(\x)-\T h(\x,\a)\geq0\quad \mbox{on $\X\times\A$};\quad h\geq0\mbox{ on $\X\times\om$},\\
&\\&w\in\R[\x];\:h\in\R[\x,\o]\,\}.
\end{array}
\end{equation}

\begin{thm}
\label{th1-joint}
Let $\K^j$, $j=1,\ldots,s_f$, and $\K$ be as in (\ref{joint-set-k-j}) and (\ref{joint-set-k}) respectively. Then:

(i) The optimal value
$\rho$ of (\ref{new-primal-lp}) and the optimal value $\hat{\rho}$ of (\ref{joint-primal-lp}) are identical.

(ii) Define the functions:
\begin{equation}
\label{partition}
(\x,\o)\mapsto \theta_j(\x,\o)\,:=\,\frac{1}{\vert\{\,\ell\in\{1,\ldots,s_f\}: (\x,\o)\,\in\,\K_\ell\}\,\vert},\quad j=1,\ldots,s_f,\end{equation}
and let $(\phi^*,\psi^*)$ be an optimal solution of (\ref{joint-primal-lp}). Then
$(\phi^*_1,\ldots,\phi^*_{s_f},\psi^*)$ with
\begin{equation}
\label{joint-phi-star}
d\phi^*_j(\x,\o)\,:=\,1_{\K_j}(\x,\o)\,\theta_j(\x,\o)\,d\phi^*(\x,\o),\quad j=1,\ldots,s_f,\end{equation}
is an optimal solution of (\ref{joint-primal-lp}) and $\phi^*=\sum_j\phi^*_j$.
\end{thm}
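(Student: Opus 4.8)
The plan is to establish part (i) by a two-sided estimate, and to obtain the reverse inequality together with part (ii) from a single explicit construction built on the weights $(\theta_j)$ of (\ref{partition}).

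First, for the upper bound $\hat\rho \le \rho$, I would take an arbitrary feasible tuple $(\phi_1,\dots,\phi_{s_f},\psi)$ of (\ref{joint-primal-lp}). Since each $\phi_j$ is supported on $\K_j$, its total mass equals $\phi_j(\K_j)$, so the objective is $\sum_j \phi_j(\K_j) = (\sum_j \phi_j)(\X\times\om)$. The measure $\sum_j \phi_j$ is then carried by $\bigcup_j \K_j = \K$ and obeys $\sum_j \phi_j \le \T^*\psi$, whence $(\sum_j\phi_j)(\K) \le (\T^*\psi)(\K)$. From here the exact chain used in the proof of Theorem \ref{th1-lp} applies verbatim: $(\T^*\psi)(\K) = \langle \T 1_\K, \psi\rangle = \int_{\X\times\A}\mu_\a(\K_\x)\,d\psi \le \int_\X \kappa(\x)\,\lambda(d\x) = \rho$, using Lemma \ref{mu-star} together with $\psi_\x = \lambda$. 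This yields $\hat\rho \le \rho$.

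Second, I would produce the matching lower bound and part (ii) at once. Let $(\phi^*,\psi^*)$ be the optimal solution of (\ref{new-primal-lp}) described in Theorem \ref{th1-lp} for the union $\K$, and set $\phi_j^* := 1_{\K_j}\theta_j\,\phi^*$. The decisive point is the partition-of-unity identity $\sum_{j=1}^{s_f} 1_{\K_j}(\x,\o)\,\theta_j(\x,\o) = 1_\K(\x,\o)$: at any $(\x,\o)\in\K$ the count $|\{\ell:(\x,\o)\in\K_\ell\}|$ is a positive integer, so summing $\theta_j$ over the indices $j$ with $(\x,\o)\in\K_j$ gives exactly $1$, while off $\K$ every term vanishes. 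Since $\phi^*$ is carried by $\K$, this identity forces $\sum_j \phi_j^* = \phi^*$. Each $\phi_j^*$ is nonnegative and supported on $\K_j$, so $\sum_j \phi_j^* = \phi^* \le \T^*\psi^*$ verifies the coupling constraint, and $\psi^*_\x = \lambda$ is inherited; hence $(\phi_1^*,\dots,\phi_{s_f}^*,\psi^*)$ is feasible for (\ref{joint-primal-lp}). Its objective value is $\sum_j \phi_j^*(\K_j) = (\sum_j \phi_j^*)(\X\times\om) = \phi^*(\K) = \rho$. Combined with the upper bound this shows $\hat\rho = \rho$, proving (i), and simultaneously certifies that the constructed tuple is optimal and sums back to $\phi^*$, proving (ii).

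The main obstacle is conceptual rather than computational: because the sets $\K_j$ genuinely overlap, one cannot simply restrict $\phi^*$ to each $\K_j$ without double-counting the mass in the overlaps. The weights $\theta_j$ are engineered precisely so that overlaps are shared fractionally and $\sum_j \phi_j^* = \phi^*$ with total mass exactly $\phi^*(\K)$; this is the role of the partition of unity and is why the $\phi_j^*$ need not be mutually singular. The only technical care needed is the measurability of $\theta_j$, equivalently of the integer-valued counting function $(\x,\o)\mapsto|\{\ell:(\x,\o)\in\K_\ell\}|$, which holds because each $\K_j$ is basic semi-algebraic and hence Borel, so that $1_{\K_j}\theta_j$ is a bounded Borel function and each $\phi_j^*$ is a well-defined finite measure.
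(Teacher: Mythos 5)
Your proposal is correct and follows essentially the same route as the paper: the upper bound $\hat\rho\le\rho$ by summing the $\phi_j$ into a single measure on $\K$ that is feasible for (\ref{new-primal-lp}), and the lower bound plus part (ii) via the partition-of-unity splitting $\phi_j^*=1_{\K_j}\theta_j\,\phi^*$ of the optimal $\phi^*$ from Theorem \ref{th1-lp}. The extra remarks on measurability of $\theta_j$ and the explicit identity $\sum_j 1_{\K_j}\theta_j=1_\K$ are welcome details that the paper leaves implicit, but they do not change the argument.
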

\begin{proof}
Let $(\phi_1,\ldots,\phi_{s_f},\psi)$ be an arbitrary  feasible solution of (\ref{joint-primal-lp}) and let $\phi:=\sum_{j=1}^{s_f}\phi_j$. Then $\phi\in\mathscr{M}(\K)_+$ and 
$\phi\leq\T^*\psi$. Therefore $(\phi,\psi)$ is feasible for (\ref{new-primal-lp}). Morover,
as ${\rm supp}(\phi_j)\subset\K^j\subset\K$ for all $j=1,\ldots,s_f$,
\[\sum_{j=1}^{s_f}\int_{\K^j}d\phi_j\,=\,\sum_{j=1}^{s_f}\int_{\K}d\phi_j
\,=\,\int_\K\,d(\sum_{j=1}^{s_f}\phi_j)\,=\,\int_\K\,d\phi\,=\,\phi(\K),\]
which shows that $\hat{\rho}\leq\,\rho$. To prove the reverse inequality consider
the functions $(\theta_j)$ in (\ref{partition}),
and let $(\phi^*,\psi^*)\in\mathscr{M}_+(\K)\times\mathscr{M}_+(\X\times\A)$ be an optimal solution of (\ref{new-primal-lp}). 
Observe that 
\[\sum_{j=1}^{s_f}\theta_j(\x,\o)\,=\,1,\quad\forall (\x,\o)\in\K.\]
Let $(\phi^*_j)$, $j:=1,\ldots,s_f$, be as in (\ref{joint-phi-star}).
Then $\phi^*_j\in\mathscr{M}_+(\K_j)$, $j=1,\ldots,s_f$, and 
$\displaystyle\sum_{j=1}^{s_f}\phi^*_j(\K_j)=\phi^*(\K)$. Hence $\rho\leq\hat{\rho}$. Therefore
$(\phi^*_1,\ldots,\phi^*_{s_f},\psi^*)$ is an optimal solution of (\ref{joint-primal-lp}) and 
$\phi^*=\sum_j\phi^*_j$.
\end{proof}

As a consequence of Theorem \ref{th1-joint}, Theorem \ref{th-abstract} also holds
with now $\K_\x$ as in (\ref{joint-set-kx}). In the original proof just use 
$\phi=\sum_j\phi_j$ and the definitions
of $\K$ and $\K_\x$ in (\ref{joint-set-k})-(\ref{joint-set-kx}).

\subsection{Semidefinite relaxations}

We briefly describe the semidefinite relaxations of the LP (\ref{joint-primal-lp}), 
which are the analogues of (\ref{chance-sdp}) for the LP (\ref{new-primal-lp}).
For every $j=1,\ldots,s_f$ let $g^*_j:=-f_j$  and let $d^*_j:=\lceil{\rm deg}(g^*_j)/2\rceil$. Let $2\replaced{d_{\mathrm{min}}}{d_0}$ to be the largest 
degree appearing in the polynomials that describe $\K,\om,\A$, and
consider the semidefinite programs indexed by $d\geq \replaced{d_{\mathrm{min}}}{d_0}$.
\begin{equation}
\label{joint-sdp}
\begin{array}{rl}
\rho_d=&\displaystyle\sup_{\y^j,\u,\v}\sum_{j=1}^{s_f}y^j_{00}\\
\mbox{s.t.}& L_{\y^1+\cdots +\y^{s_f}+\u}(\x^\alpha\o^\beta)-L_\v(\x^\alpha p_\beta(\a))\,=0,\:\vert\alpha+{\rm deg}(p_\beta)\vert\leq 2d,\\
&L_\v(\x^\alpha)=\lambda_\alpha,\quad\alpha\in\N^n_{2d},\\
&\M_d(\y^j),\M_d(\u),\M_d(\v)\,\succeq\,0,\\
&\M_{d-d^*_j}(g^*_j\,\y^j)\succeq0,\quad j=1,\ldots,s_f,\\
&\M_{d-d_\ell}(g_\ell\,\y^j),\: \M_{d-d_\ell}(g_\ell\,\u),\,\M_d(g_\ell\,\v)\succeq0,\\
&\ell=1,\ldots,m;\:j=1,\ldots,s_f,\\
&\M_{d-d^1_\ell}(s_\ell\,\y^j),\:\M_{d-d^1_\ell}(s_\ell\,\u)\,\succeq 0,\\
&\ell=1,\ldots\bar{s};\:j=1,\ldots,s_f,\\
&\M_{d-d'_\ell}(q_\ell\,\v)\,\succeq0,\quad \ell=1,\ldots,L,
\end{array}
\end{equation}
where $\y^j =(y^j _{\alpha\beta})$, 
$\u=(v_{\alpha\beta})$, $(\alpha,\beta)\in\N^n\times\N^p$, $j=1,\ldots,s_f$, and
$\v=(v_{\alpha\eta})$, $(\alpha,\eta)\in\N^n\times\N^t$.\\

The dual of \eqref{joint-sdp} 
is a reinforcement of \eqref{joint-lp-dual} and its interpretation in terms of SOS positivity certificates 
of size parametrized by $d$ (the analogue of (\ref{chance-sdp-dual})) reads:

\begin{equation}
\label{joint-sdp-dual}
\begin{array}{rl}
\rho^*_d=\displaystyle\inf_{h,w,\sigma^i_j} &\displaystyle\int_\X w(\x)\,d\lambda(\x):\\
\mbox{s.t}&h(\x,\o)-1=\displaystyle\sum_{\ell=0}^{m}\sigma^1_{j\ell}\,g_\ell+\sigma^1_{j,m+1}g^*_j
+\displaystyle\sum_{\ell'=1}^{\bar{s}}\sigma^1_{j\ell'}\,s_{\ell'},\quad\forall (\x,\o);\\
&\mbox{ for all }j=1,\ldots,s_f;\\
&h(\x,\o)=\displaystyle\sum_{\ell=0}^m\sigma^2_\ell\,g_\ell
+\displaystyle\sum_{\ell'=1}^{\bar{s}}\sigma^2_{\ell'}\,s_{\ell'},\quad\forall (\x,\o);\\
&w(\x)-\displaystyle\sum_{\alpha,\beta}h_{\alpha\beta}\,\x^\alpha\,p_\beta(\a)=
\displaystyle\sum_{\ell=0}^m\sigma^3_\ell\,g_\ell+\displaystyle\sum_{\ell'=1}^L\sigma^3_{\ell'}\,q_{\ell'},\quad\forall (\x,\a);\\
&{\rm deg}(h),\:{\rm deg}(w)\leq 2d;\:\sigma_\ell^1\in\Sigma[\x,\o]_{d-d_\ell},j=0,\ldots,m\\
&\sigma^1_{j\ell}, \sigma^2_\ell\in\Sigma[\x,\o]_{d-d_\ell};\:\sigma^1_{j,m+1}\in\Sigma[\x,\o]_{d-d^*_j},\:\ell=0,\ldots,m;\:j=1,\ldots,s_f\\
&\sigma_\ell^3\in\Sigma[\x,\a]_{d-d_\ell};\ell=0,\ldots,m;\:\sigma_{\ell'}^3\in\Sigma[\x,\a]_{d-d'_\ell};\:\ell'=1,\ldots,L\\
&\sigma^1_{j\ell'},\sigma^2_{\ell'}\in\Sigma[\x,\o]_{d-d'_\ell},\:j=1,\ldots,s_f;\:\ell'=1,\ldots,L
\end{array}
\end{equation}
where $h(\x,\o)=\sum_{\vert\alpha+\beta\vert\leq 2d}h_{\alpha\beta}\,\x^\alpha\,\o^\beta$, and $w(\x)=\sum_{\vert\alpha\vert\leq 2d}w_{\alpha}\,\x^\alpha$.\\

For every $d\geq \replaced{d_{\mathrm{min}}}{d_0}$, $\rho_d\geq \hat{\rho}$ ($=\rho$) and Theorem \ref{th3} is valid for the hierarchy of semidefinite relaxations (\ref{joint-sdp}). Next, under the same assumptions
of non-empty interior for $\X,\A,\om,\K,\K^j$ and $(\X\times\om)\setminus \K$,
Theorem \ref{th4} is also valid for the dual hierarchy \eqref{joint-sdp-dual}.

\section{Conclusion}

Computing or even approximating the feasible set associated with a distributionally-robust chance-constraint is a challenging problem. We have described a systematic numerical scheme 
which provides a monotone sequence (a hierarchy) of inner approximations, all in the form 
$\{\x\in\X: w_d(\x)<\varepsilon\}$ for some polynomial of increasing degree $d$, with strong asymptotic guarantees as $d$ increases. 
To the best of our knowledge it is the first result of this type at this level of generality.
Of course this comes with a price as the polynomial 
which defines each approximation is obtained by solving a semidefinite program 
whose size increases with its degree. Therefore and so far, this approach 
is limited to problems of small dimension (except perhaps if
some sparsity can be exploited). So in its present form this contribution should be 
considered as complementary (rather than a competitor) to other algorithmic approaches where
scalability is of primary importance. However it may also provide useful insights 
and a benchmark (for small dimension problems) for the latter approaches.

\section{Appendix}

\begin{lem}
\label{mom-det}
Under Assumption \ref{ass-on-Ma-2}, every 
$\mu\in\mathscr{M}_\a$ is moment determinate.
\end{lem}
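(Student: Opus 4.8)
The plan is to apply the Nussbaum--Carleman criterion of Lemma~\ref{lem-carleman} to the moment sequence $\y=(y_\alpha)_{\alpha\in\N^p}$ of $\mu$, where $y_\alpha=\int_\om\o^\alpha\,d\mu(\o)$. Since $\mu$ is itself a representing measure for $\y$, the positive semidefiniteness $\M_d(\y)\succeq0$ holds automatically for every $d$, so the only hypothesis requiring work is the divergence of the series $\sum_{k\ge1}L_\y(\o_i^{2k})^{-1/2k}$ for each coordinate $i=1,\dots,p$. Establishing this divergence is the entire content of the proof, and it will follow from a uniform control on the even moments of $\mu$ inherited from Assumption~\ref{ass-on-Ma-2}.

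The first step is to transfer the exponential-moment bound \eqref{suff-carleman} from the family $(\mu_\a)_{\a\in\A}$ to an arbitrary mixture $\mu\in\mathscr{M}_\a$. Writing $\mu(B)=\int_\A\mu_\a(B)\,d\varphi(\a)$ as in \eqref{def-Ma} and using Tonelli's theorem together with the uniformity in $\a$ built into \eqref{suff-carleman}, I obtain, for each $i=1,\dots,p$,
\[
\int_\om\exp(c\,\vert\o_i\vert)\,d\mu(\o)=\int_\A\Bigl(\int_\om\exp(c\,\vert\o_i\vert)\,d\mu_\a(\o)\Bigr)\,d\varphi(\a)\le\int_\A\gamma\,d\varphi(\a)=\gamma,
\]
so that $\mu$ inherits a finite exponential moment, bounded by $\gamma$, in each coordinate direction, \emph{independently} of the mixing distribution $\varphi$.

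Second, I estimate the even moments from this exponential moment. From the termwise inequality $t^{2k}\le \tfrac{(2k)!}{c^{2k}}\exp(c\vert t\vert)$, valid for all $t\in\R$ since it is a single term of the power series of the exponential, integration against $\mu$ yields
\[
L_\y(\o_i^{2k})=\int_\om \o_i^{2k}\,d\mu(\o)\le\frac{(2k)!}{c^{2k}}\int_\om\exp(c\vert\o_i\vert)\,d\mu(\o)\le\frac{\gamma\,(2k)!}{c^{2k}}.
\]
Hence $L_\y(\o_i^{2k})^{-1/2k}\ge c\,\gamma^{-1/2k}\,((2k)!)^{-1/2k}$, and by Stirling's formula $((2k)!)^{1/2k}\sim 2k/e$ while $\gamma^{-1/2k}\to1$, so the general term is bounded below by a quantity asymptotic to $ce/(2k)$. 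Since $\sum_k 1/k$ diverges, the Carleman series diverges for every $i=1,\dots,p$, and Lemma~\ref{lem-carleman} then furnishes both a representing measure (namely $\mu$ itself) and its moment determinacy, as claimed.

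The computations above are routine; the only point genuinely requiring care is the first step, namely that the exponential-moment bound survives the mixing operation. This is precisely why Assumption~\ref{ass-on-Ma-2} is formulated with a supremum over $\a\in\A$ rather than mere pointwise finiteness: the single uniform constant $\gamma$ is what passes through the integral against $\varphi$ to give one finite exponential moment for the whole mixture, despite $\varphi$ (and hence $\mu$) being unknown. Once this uniformity is secured, the rest is the classical implication ``finite exponential moment $\Rightarrow$ moment determinacy'' delivered by Lemma~\ref{lem-carleman}.
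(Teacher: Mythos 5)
Your proof is correct and follows essentially the same route as the paper's: transfer the uniform exponential-moment bound of Assumption~\ref{ass-on-Ma-2} through the mixing integral to get $\int_\om \exp(c\vert\o_i\vert)\,d\mu\le\gamma$, deduce $\int_\om \o_i^{2k}\,d\mu\le c^{-2k}\gamma\,(2k)!$ from a single term of the exponential series, and conclude via Carleman's condition in Lemma~\ref{lem-carleman}. Your write-up is in fact slightly more careful than the paper's (you make the Tonelli step and the Stirling estimate for $((2k)!)^{-1/2k}$ explicit, where the paper just asserts the divergence), but there is no substantive difference.
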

\begin{proof}
As $\mu\in\mathscr{M}_\a$, there exists $\varphi\in\mathscr{P}(\A)$ such that
\[\mu(B)\,=\,\int_\om\mu_\a(B)\,d\varphi(\a),\qquad B\in\mathcal{B}(\om).\]
By Assumption \ref{ass-on-Ma-2}, there exists $c,\gamma$ such that for every $i=1,\ldots,p$, 
\[\int_\om \exp(c\,\vert\o_i\vert)\,d\mu_\a(\o)
\,<\,\gamma,\qquad\forall\a\in\A,\]
and therefore 
\[\sup_{i=1,\ldots,p}\int_\om \exp(c\,\vert\o_i\vert)\,d\mu(\o)\,<\,\gamma.\]
As $(c\,\o_i)^{2j}/(2j){\rm !}<\exp(\vert\o_i\vert)$, one obtains
$\int_\om \o_i^{2j}\,d\mu(\o)<c^{-2j}\gamma\,(2j){\rm !}$ for all $j=1,,2,\ldots$ and all $i=1,\ldots,p$.
But this implies
\[\sum_{i=1}^\infty \left(\int_\om \o_i^{2j}\,d\mu(\o)\right)^{-1/2j}\,=\,+\infty,\qquad i=1,\ldots,p,\]
that is, $\mu$ satisfies Carleman's condition (\ref{lem-carleman}), and so is moment determinate.
\end{proof}

\subsection{Proof of Lemma \ref{mu-star}}
\label{proof-mu-star}
\begin{proof}
With $\x\in\X$ fixed, let $\theta:=\sup_{\a\in\A}\,\mu_\a(\K_\x)$.
For every $\mu\in\mathscr{M}_\a$, there exists $\varphi\in\mathscr{P}(\A)$ such that
$\mu(\K_\x)=\int_\A \mu_\a(\K_\x)\,d\varphi(\a)$ and 
so $\mu(\K_\x)\leq\theta$ for all $\mu\in\mathscr{M}_\a$. 
Conversely $\sup_{\mu\in\mathscr{M}_\a}\mu(\K_\x)\geq\theta$ because $\mu_\a\in\mathscr{M}_\a$ for all $\a\in\A$.
Next 
let $v:\X\times\A\to [0,1]$ be given by $v(\x,\a):=\mu_\a(\K_\x)$. By construction
$0\leq v\leq 1$ on $\X\times\A$ and if 
$v(\x,\cdot)$ is upper-semicontinous on $\A$ for every $\x\in\X$, then  by \cite[Proposition 4.4, p. 2018]{lass-siopt-2010}
there exists a {\em measurable selector} $\x\mapsto \a(\x)\in\A$, $\x\in\X$, such that
$v(\x,\a(\x))=\max\,\{v(\x,\a):\a\in\A\}$, that is, the desired result (\ref{mu-star-1}) holds.

So it remains to prove that $v(\x,\cdot)$ is upper-semicontinuous on $\A$ for every $\x\in\X$.
In fact we even prove that $v(\x,\cdot)$ is continuous on $\A$ for every $\x\in\X$.
So let $(a_n)_{n\in\N}\subset\A$ with $a_n\to\a\in\A$ as $n\to\infty$.
Let $q$ be an arbitrary  bounded continuous function on $\om$. Then by Assumption \ref{ass-on-Ma}(iv),
\[\lim_{n\to\infty}\int_\om q\,d\mu_{\a_{n}}(\o)\,=\,\lim_{n\to\infty}Q(\a_{n})\,=\,Q(\a)\,=\,
\int_\om q\,d\mu_{\a}(\o),\]
which proves that $\mu_{\a_n}\Rightarrow\mu_{\a}$ as $n\to\infty$
(where $\Rightarrow$ denotes the weak convergence of probability measures ; see Billingsley \cite{billingsley}).
In addition, in view of the definition of $\K_\x$ in (\ref{set-kx}), its boundary $\partial\K_\x$  is contained in the zero set
of some polynomials and therefore, by Assumption \ref{ass-on-Ma}(iii), 
$\mu_{\a_n}(\partial\K_\x)=\mu_\x(\partial\K_\x)=0$ for all $n$ (i.e. $\partial\K_\x$ is
a $\mu_{\a_n}$-continuity set). Hence by the Portmanteau theorem \cite[Theorem 2.1, p. 11]{billingsley} it follows that
\[\lim_{n\to\infty}\mu_{\a_n}(\K_\x)\,=\,\lim_{n\to\infty}v(\x,\a_{n})\,=\,\mu_{\a}(\K_\x)\,=\,v(\x,\a),\]
i.e., $v(\x,\cdot)$ is continuous on $\A$ for every $\x\in\X$. In addition
$\kappa(\x)=v(\x,\a(\x))$ is also measurable.
\end{proof}

\subsection{Proof of Theorem \ref{th-abstract}}
\label{proof-th-abstract}

\begin{proof}
{\em Weak duality} holds 
because for every feasible solution $(w,h)$ of (\ref{chance-lp-dual}) and
$(\phi,\psi)$ of (\ref{new-primal-lp}), one has:
\begin{eqnarray*}
\int_\X w \,d\lambda=\int_{\X\times\A} w(\x)\,d\psi(\x,\a)&\geq&\int_{\X\times\A}\T h(\x,\a))\,d\psi(\x,\a)\\
&=&\int_{\X\times\om}h(\x,\o)\,\T^*\psi(d(\x,\o))\\
&\geq&\int_{\K}h(\x,\o)\,d\phi(\x,\o)\\
&\geq& \int_\K d\phi=\phi(\K).
\end{eqnarray*}
Moreover let $\psi^*$ be an optimal solution of (\ref{new-primal-lp}) as in Theorem \ref{th1-lp}, so
that $\psi^*=\delta_{\a(\x)}\lambda(d\x)$. Then for every $\x\in\X$:
\begin{eqnarray*}
w(\x)\,\geq\,\int_\A \T h(\x,\a)\,\psi^*(d\a\vert\x)&=&\int_\om h(\x,\o)\,d\mu_{\a(\x)}(\o)\\
&\geq&\int_{\K_\x} h(\x,\o)\,d\mu_{\a(\x)}(\o)\\
&\geq&\int_{\K_\x} d\mu_{\a(\x)}(\o)\,=\,\kappa(\x),
\end{eqnarray*}
i.e., $w(\x)\geq\kappa(\x)$ for all $\x\in\X$. In particular
\[\{\x\in\X: w(\x)<\varepsilon\}\,\subset\,\{\x\in\X:\kappa(\x)<\varepsilon\}\,=\,\X^*_\varepsilon.\]
Next, if there is no duality gap, i.e., if $\rho=\rho^*$,
then for a minimizing sequence $(w_n,h_n)$ of
(\ref{chance-lp-dual}), 
\[\lim_{n\to\infty}\int_\X(w_n(\x)-\kappa(\x))\,d\lambda(\x)\,=\,\rho^*-\rho\,=\,0,\]
that is, $w_n$ converges to $\kappa(\x)$ in $L_1(\X,\lambda)$.
Finally, by Ash \cite[Theorem 2.5.1]{Ash},  
convergence in $L_1(\X,\lambda)$ implies convergence in $\lambda$-measure, and so,
for every fixed $0<\ell\in\N$,
\begin{equation}
\label{conv-measure}
\lim_{n\to\infty}\lambda\,\{\x\in\X: \vert w_n(\x)-\kappa(\x)\vert>1/\ell\}\,=\,0.\end{equation}
Next, observe that 
\[\X^*_\varepsilon\,=\,\{\x\in\X:\kappa(\x)<\varepsilon\}\,=\,\bigcup_{\ell=1}^\infty \underbrace{\{\x\in\X:\kappa(\x)<\varepsilon-1/\ell\}}_{:=R_\ell}\]
and so $\lambda(\X^*_\varepsilon)\,=\,\lim_{\ell\to\infty}\lambda(R_\ell)$. Next
\[\lambda(R_\ell)\,=\,\lambda(R_\ell\cap\{\x: w_n(\x)<\varepsilon\})+\lambda(R_\ell\cap\{\x: w_n(\x)\geq\varepsilon\}).\]
By convergence in measure (\ref{conv-measure}), $\lim_{n\to\infty}\lambda(R_\ell\cap\{\x: w_n(\x)\geq\varepsilon\})=0$. Hence
\[\lambda(R_\ell)\,=\,\lim_{n\to\infty}\lambda(R_\ell\cap\{\x: w_n(\x)<\varepsilon\})\,\leq\,
\lim_{n\to\infty}\lambda(\{\x: w_n(\x)<\varepsilon\})\leq\lambda(\X^*_\varepsilon),\]
and as $\lambda(R_\ell)\to\lambda(\X^*_\varepsilon)$, $\lim_{n\to\infty}\lambda(\{\x: w_n(\x)<\varepsilon\})=\lambda(\X^*_\varepsilon)$.
\end{proof}

\subsection{Proof of Lemma \ref{lem-handle}}
\label{proof-lem-handle}

\begin{proof}
If $\om$ is compact then it follows from the definition of 
$\T$ and $\T^*$. For the general case where $\om$ is not necessarily compact, disintegrate $\nu$ and $\psi$ as
\[d\nu(\x,\om)\,=\,\hat{\nu}(d\o\vert\x)\,\nu_\x(d\x),\quad
d\psi(\x,\a)\,=\,\hat{\psi}(d\a\vert\x)\,\psi_\x(d\x).\]
By (\ref{moments}) with $\beta=0$,
$\int_\X\x^\alpha\nu_\x(d\x)=\int_\X\x^\alpha \psi_\x(d\x)$ for all $\alpha\in\N^n$,
and as $\X$ is compact it follows that $\psi_\x=\nu_\x$. Next,
fix $\beta\in\N^p$. Then for every $\alpha\in\N^n$
\[\int_\X \x^\alpha\,\left(\int_\om \o^\beta\,\hat{\nu}(d\o\vert\x)\right)\,\nu_\x(d\x)
\,=\,\int_\X \x^\alpha\,\left(\int_\A p_\beta(\a) \hat{\psi}(d\a\vert \x)\right)\,\nu_\x(d\x),\]
and again as $\X$ is compact this implies
\[\int_\om \o^\beta\,\hat{\nu}(d\o\vert\x)\,=\,
\int_\A p_\beta(\a) \hat{\psi}(d\a\vert \x),\quad\forall \x\in \X\setminus B_\beta,\]
where $B_\beta\in\mathcal{B}(\X)$ is such that $\nu_\x(B_\beta)=0$.
As $\beta\in\N^p$ was arbitrary,
\[\int_\om \o^\beta\,\hat{\nu}(d\o\vert\x)\,=\,
\int_\A p_\beta(\a) \hat{\psi}(d\a\vert \x),\quad\forall  \beta\in\N^p,\,\forall\x\in \X\setminus B^*,\]
where $\nu_\x(B^*)=\nu_\x(\bigcup_\beta B_\beta)=0$.
Next, define the measure $\varphi_\x$ on $\om$ by
\[\varphi_\x(B)\,=\,\int_\A \mu_\a(B)\,\hat{\psi}(d\a\,\vert\x),\qquad\forall
 \x\in \X\setminus B^*,\]
which is well defined by Assumption \ref{ass-on-Ma}(i). 
Moreover by construction, $\varphi_\x\in\mathscr{M}_\a$  for all
$\x\in \X\setminus B^*$, and
\begin{eqnarray*}
\int_\om\o^\beta\,\hat{\nu}(d\o\vert\x)&=&
\int_\A p_\beta(\a)\,\hat{\psi}(d\a\vert\x),\quad \forall\beta\\
&=&\int_\A \left(\int_\om \o^\beta \mu_\a(d\o)\right)\,\hat{\psi}(d\a\vert\x),\quad\forall\beta\\
&=&\int_\om\o^\beta \,\varphi_\x(d\o),\quad\forall\beta.
\end{eqnarray*}
By Lemma \ref{mom-det}, $\varphi_\x$ is moment determinate and therefore $\hat{\nu}(d\o\vert\x)=\varphi_\x$ for all $\x\in\X\setminus B^*$.
Next, let $g\in\mathscr{B}(\X\times\om)$ be fixed arbitrary. Then:
\begin{eqnarray*}
\langle g,\nu\rangle \,=\,\langle g,\hat{\nu}(d\o\vert\x)\,\nu_\x(d\x)\rangle&=&
\langle g,\varphi_\x(d\o)\nu_\x(d\x)\rangle\\
&=&\langle g,\varphi_\x(d\o)\psi_\x(d\x)\rangle\\
&=&\langle \T g,\hat{\psi}(d\a\vert\x)\psi_\x(d\x)\rangle\\
&=&\langle \T g,\psi\rangle\,=\,\langle g,\T^*\psi\rangle,
\end{eqnarray*}
and as it holds for all $g\in\mathscr{B}(\X\times\om)$, $\nu=\T^*\psi$.
\end{proof}

\subsection{Proof of Theorem \ref{th4}}
\label{proof-th4}
\begin{proof}
(i) We first prove that Slater's condition holds for (\ref{chance-sdp}).
Observe that for all feasible solutions, $y_{00}+u_{00}=L_\v(1)=1$
and therefore $0\leq\rho_d\leq1$.
Let $\lambda_\A$ be the Lebesgue measure on $\A$, normalized to
a probability measure. 
Let $\y$ be the the moments of the measure $d\phi(\x,\o)=1_{\bS}(\x)\,d\lambda(\x)\otimes\hat{\mu}(d\o)$, where
\[\hat{\mu}(B)\,=\,\int_\A \mu_\a(B)\,d\lambda_\A(\a),\qquad B\in\mathcal{B}(\om).\]
Similarly, let $\nu:=\lambda\otimes\hat{\mu}$
(and so $\phi\leq\nu$) and let $\psi=\lambda_\A\otimes \lambda$.
Let $\y$ (resp. $\u$) be the vector of moments of $\phi$
(resp. $\nu-\phi$) up to order $2d$, and let $\v$ be the vector of moments of $\psi$ up to order $2d$.  Then $\M_d(\y),\M_d(\u),\M_d(\v)\succ0$.
Similarly $\M_{d-d_j}(\y)\succ0$, $j=0,\ldots,m$,
$\M_{d-d_j}(g_j\,\u)\succ0$, $j=1,\ldots,m$, and
$\M_{d-d'_\ell}(q_\ell\,\v)\succ0$, $\ell=1,\ldots,L$, because
$\X\times\om,\K,\X\times\om\setminus\K,\A$ all have nonempty interior.
Moreover, as
\[\int \x^\alpha\o^\beta\,d\nu(\x,\o)\,=\,\int_\X\x^\alpha\,d\lambda(\x)\,\int_\A(\int_\om\o^\beta\,d\mu_\a(\o))\,d\lambda_\A(\a))\]
\[=\int_\X\x^\alpha\,d\lambda(\x)\,\int_\A p_\beta(\a)\,d\lambda_\A(\a)
=\int_{\X\times\A}\x^\alpha\,p_\beta(\a)\,d\psi(\x,\a),\]
we deduce $L_{\y+\u}(\x^\alpha\o^\beta)=L_\v(\x^\alpha\,p_\beta(\a))$,
and therefore $(\y,\u,\v)$ is  an admissible solution of (\ref{chance-sdp}) which is strictly feasible, i.e., Slater's condition holds for (\ref{chance-sdp})
and therefore strong duality $\rho_d=\rho^*_d$ holds. In particular, as $\rho_d<\infty$,
(\ref{chance-sdp-dual}) has an optimal solution $(h_d,w_d,\sigma^i_j)$.

(ii) Next feasibility in (\ref{chance-sdp-dual}) implies
\[h_d(\x,\o)\,\geq\,1, \quad\forall (\x,\o)\in\K,\]
\[h_d(\x,\o)\,\geq\,0, \quad\forall (\x,\o)\in\X\times\om,\]
\[w_d(\x)-\sum_{\alpha,\beta}h_{d,\alpha,\beta}\,\x^\alpha\,p_\beta(\a)\,\geq\,0, \quad\forall (\x,\a)\in\X\times\A.\]
Let $(\phi^*,\psi^*)$ be optimal solution of (\ref{new-primal-lp}), as in Theorem \ref{th1-lp},
and let $d\nu^*(\x,\o):\mu_{\a(\x)}(d\o)\,\lambda(d\x)$. Let $\x\in\X$ be fixed.
Integrating the first w.r.t. $\nu^*_\x(d\o\vert\x)$ ($=\mu_{\a(\x)}(d\o)$), the third one w.r.t. $\psi^*(d\a\vert\x)$ ($=\delta_{\a(\x)}$), and using the second inequality  yields:
\[w_d(\x)+
\sum_{\alpha,\beta}h_{d,\alpha,\beta}\,\x^\alpha\,(\int_\om\o^\beta \,d\mu_{\a(\x)}(\o)-\,\underbrace{\int_\A p_\beta(\a)\,d\psi^*(d\a\vert\x)}_{=\int_\om\o^\beta\,d\mu_{\a(\x)}(\o)})\]
\[\geq\,\left\{\begin{array}{l}\displaystyle\int_{\K_\x}\nu^*_\x(d\o\vert\x)=\kappa(\x),\quad \mbox{if $\K_\x\neq\emptyset$,}\\
0\,(=\kappa(\x))\mbox{ otherwise.}\end{array}\right..\]
In other words 
\begin{equation}
\label{aux}
w_d(\x)\,\geq\,\kappa(\x),\quad\forall \x\in\X.
\end{equation}
Therefore
\[\{\x\in\X: w_d(\x) \,<\varepsilon\}\,\subseteq\,\{\x\in\X: \kappa(\x)\,<\varepsilon\}\,=\,\X^*_\varepsilon.\]
Next if $\lim_{d\to\infty}\rho_d=\rho$ then
\[\lim_{d\to\infty}\int_\X w_d\,d\lambda=\lim_{d\to\infty}\rho^*_d=\lim_{d\to\infty}\rho_d=\int_\X \kappa(\x)\,\lambda(d\x),\] which yields
\[\int_\X (w_d(\x)-\kappa(\x))\,d\lambda(\x)\,\to\,0\quad\mbox{as $d\to\infty$},\]
which combined with (\ref{aux}), yields $w_d\to\kappa$ in $L_1(\X,\lambda)$. 
\end{proof}

\subsection{Proof of Theorem \ref{th3}}
\label{proof-th3}
\begin{proof}
We prove Theorem \ref{th3} for the case where $\om$ is unbounded
as the arguments also work for the bounded case (but without Assumption \ref{ass-on-Ma} and 
\ref{ass-on-Ma-2}).

Let $(\y^k,\u^k,\v^k)$ be a maximizing sequence of (\ref{chance-sdp}).
For every $i=1,\ldots,p$ and $j\in\N$, let $\beta(i,j)\in\N^p$
be such that $\beta(i,j)_k=2j\delta_{k=i}$.
Observe that for every feasible solution $(\y,\u,\v)$ of (\ref{chance-sdp}), and from a
consequence of Assumption \ref{ass-on-Ma-2} (see the proof of Lemma \ref{mom-det}):
\begin{eqnarray}
\nonumber
L_\y(\o^{2j}_i)+L_\u(\o_i^{2j})&=& L_\v(p_{\beta(i,j)}(\a))\\
\label{bound}
&=&\int_\A p_{\beta(i,j)}(\a)\,d\lambda_\A(\a)\,\leq\,c^{-2j}\,\gamma\,(2j){\rm !}\,
\end{eqnarray}
for all  $j=1,\ldots,d$ and all $i=1,\ldots,p$. Therefore
$L_\y(\o^{2j}_i)\leq c^{-2j}\,\gamma\,(2j){\rm !}$ and
$L_\u(\o^{2j}_i)\leq c^{-2j}\,\gamma\,(2j){\rm !}$
for all $j=1,\ldots,d$ and all $i=1,\ldots,n$.
As $\A$ is compact and $q_1(\a)=M-\Vert\a\Vert^2$,
it follows that $L_\y(a_i^{2j})\leq M^j$, $j=1,\ldots d$, $i=1,\ldots,L$. By the same argument
using now $g_1(\x)=M-\Vert\x\Vert^2$, 
$L_\y(x_i^{2j})\leq M^j$, $L_\u(x_i^{2j})\leq M^j$,  and $L_\v(x_i^{2j})\leq M^j$,
$j=1,\ldots d$, $i=1,\ldots,L$. 

Next, as $y_{00}\leq1$, $\u_{00}\leq1$ and $v_{00}=1$, and $\M_d(\y),\M_d(\u),\M_d(\v)\succeq0$,
by invoking \cite[]{lass-netzer}, we obtain
\[\vert y_{\alpha\beta}\vert\,\leq\,\max[1,\max_i[L_\y(x_i^{2d}),L_\y(\o_i^{2d})]]\,=\,\tau_1,\]
\[\vert u_{\alpha\beta}\vert\,\leq\,\max[1,\max_i[L_\y(x_i^{2d}),L_\u(\o_i^{2d})]]\,=\,\tau_2;\quad
\vert v_{\alpha\beta}\vert\,\leq\,\max[1,M^d]\,=\,\tau_3,\]
for all $(\alpha,\beta)$. This implies that the feasible set of (\ref{chance-sdp}) is compact and 
so (\ref{chance-sdp})  has an optimal solution $(\y^d,\u^d,\v^d)$ for every $d\geq \replaced{d_{\mathrm{min}}}{d_0}$.

For every $d$ let $\tau_d:=\max[\tau_1,\tau_2,\tau_3]$. Next, by completing with zeros, consider the finite vectors  $\y^d$, $\u^d$ and $\v^d$ 
has infinite sequences. As $\vert y_{\alpha\beta}\vert\leq\tau_d$,
$\vert u_{\alpha\beta}\vert\leq\tau_d$ and $\vert v_{\alpha\eta}\vert\leq\tau_d$, whenever
$\vert\alpha+\beta\vert\leq 2d$ and $\vert\alpha+\eta\vert\leq 2d$,
by a standard argument
\footnote{Let $(u^n)_{n\in\N}$ be a sequence of infinite sequences such that
$\sup_n\vert u^n_i\vert<\tau_i$ for all $i=1,\ldots$, and let
$\hat{u}^n_i:=u^n_i/\tau_i$, for all $i,n$. Then $(\hat{u}^n)\subset\B_1$ where $\B_1$ is the unit ball of $\ell_\infty$. By weak-star sequential compactness
of $\B_1$, there is a subsequence $(n_k)$ and $\hat{u}\in\B_1$ such that
$\hat{u}^{n_k}\to \hat{u}$ 
for the $\sigma(\ell_\infty,\ell_1)$
weak-star topology of $\ell_\infty$. In particular, 
$\hat{u}^{n_k}_i\to\hat{u}_i$, as $k\to\infty$, for all $i=1,\ldots$, which implies
$u^{n_k}_i\to\tau_i\hat{u}_i$, as $k\to\infty$, for all $i=1,\ldots$.}
there exists
a subsequence $(d_k)_{k\in\N}$ and 
infinite sequences $\y^*=(y^*_{\alpha\beta})$, $\u^*=(u^*_{\alpha\beta})$,
and $\v^*=(v_{\alpha\eta})$, $\alpha\in\N^n$, $\beta\in\N^p$, and $\eta\in\N^L$, such that
\begin{equation}
\label{aa-1}
\lim_{k\to\infty} y^{d_k}_{\alpha\beta}\,=\,y^*_{\alpha\beta},\quad
\lim_{k\to\infty} u^{d_k}_{\alpha\beta}\,=\,u^*_{\alpha\beta},\quad\alpha\in\N^n,\,\beta\in\N^p,
\end{equation}
\begin{equation}
\label{aa-2}
\lim_{k\to\infty} v^{d_k}_{\alpha\eta}\,=\,v^*_{\alpha\eta},\quad\alpha\in\N^n,\,\eta\in\N^L.
\end{equation}
Next, fix $r\in\N$ arbitrary. By (\ref{aa-1}) and (\ref{aa-2}),
$\M_r(\y^*)\succeq0$, $\M_r(\u^*)\succeq0$ and $\M_r(\v^*)\succeq0$. In addition:
\[L_{\y^*}(\o^{2j}_i)^{-1/2j}\,\geq\,c\,\gamma^{-1/2j}\,(2j{\rm !})^{-1/2j},\quad \forall i,j,\]
and similarly 
\[L_{\u^*}(\o^{2j}_i)^{-1/2j}\,\geq\,c\,\gamma^{-1/2j}\,(2j{\rm !})^{-1/2j},\quad \forall i,j.\]
Also
\[L_{\y^*}(x^{2j}_i)^{-1/2j},\: L_{\u^*}(x^{2j}_i)^{-1/2j},\: L_{\v^*}(x^{2j}_i)^{-1/2j}
\,\geq\, M^{-1/2}\quad \forall i,j.\]
Therefore as $\sum_{j=1}^\infty (2j{\rm!})^{-1/2j}=+\infty$, one obtains
\begin{equation}
\label{crucial}
\sum_{j=1}^\infty L_{\y^*}(\o^{2j}_i)^{-1/2j}\,=\,+\infty,\quad\mbox{and}\quad
\sum_{j=1}^\infty L_{\y^*}(x_i^{2j})^{-1/2j}\,=\,+\infty,
\quad \forall i,j,\end{equation}
and similarly for $\u^*,\v^*$. In summary,
the three sequences $\y^*,\u^*$ and $\v^*$ satisfy the multivariate Carleman's condition
(\ref{carleman}). As $\M_d(\y^*),\,\M_d(\u^*),\,\M_d(\v^*)\succeq0$, 
they have a representing measure $\phi^*,\varphi^*$ and $\psi^*$ 
respectively on $\R^n\times\R^p$, $\R^n\times\R^p$ and $\R^n\times\R^t$.\\

Next, as (\ref{archimedian}) holds, the quadratic module of $\R[\x,\a]$
generated by the polynomials $(g_j,q_\ell)$ is Archimedean. Therefore, as 
$\M_d(g_j\,\v^*)\succeq0$ and $\M_d(q_\ell\,\v^*)\succeq0$ for all $d$ and
all $j=1,\ldots,m$, $\ell=1,\ldots,L$, by Putinar's Theorem \cite{putinar}, the measure $\psi^*$ is supported on $\X\times\A$. 
Also, as (\ref{archimedian}) holds, the quadratic module of $\R[\x]$
generated by the polynomials $(g_j)$ is Archimedean. Hence the 
marginal $\phi^*_\x$ of $\phi$ is supported on $\X$.
If $\om$ is compact (and as then $s_1(\o)=M-\Vert\o\Vert^2$) a similar argument shows that
$\phi^*$ is supported on $\K$.

If $\om$ is not compact, then by (\ref{bound}) and (\ref{aa-1})
we have $L_{\y^*}(\o_i^{2j})\leq c^{-2j}\,\gamma\,(2j{\rm !})$ for all $i,j$.
As $\M_d(s_\ell\,\y^*)\succeq0$ for all $\ell$, and $\M_d(-f\,\y^*)\succeq0$ 
for all $d$, then by \cite[Theorem 2.2, p. 2494]{lass-tams}\footnote{In the proof 
of Theorem 2.2 in \cite{lass-tams}, $c=1$, but the proof can be extended easily to
arbitrary $c>0$},
$f(\x,\o)\leq0$ and $s_\ell(\o)\geq0$, $\ell=1,\ldots,\bar{s}$, 
on the support of $\phi^*$. That is, ${\rm support}(\phi^*)\subset\K$.

Hence $(\phi^*,\psi^*)$
is a feasible solution of (\ref{new-primal-lp}).
But as $\phi^*(\K)=\lim_{d\to\infty}\rho_d\geq \rho$, we conclude that $(\phi^*,\psi^*)$
is an optimal solution with value $\rho$. 
\end{proof}

\subsection{Verifying Assumption \ref{ass-on-Ma-2} and Assumption \ref{ass-on-Ma-2}}
\label{verifying}

\subsubsection{$\A$ is a finite set}
 In this case $\om=\R^p$ and $\A=\{1,\ldots,\kappa\}$. Then
Assumption \ref{ass-on-Ma} holds and Assumption
\ref{ass-on-Ma-2} holds whenever it holds for
each individual $\mu_{i}$, $i=1,\ldots,\kappa$. The set $\mathscr{M}_\a$ can be identified with
the simplex $\Delta=\{\lambda\in\R^\kappa:\sum_i\lambda=1,\:\lambda\geq0\}$. In Theorem \ref{th1-lp}, 
the conditional probability $\hat{\phi^*}(d\a\vert\x)$, $\x\in\bS$, of the optimal solution $\phi^*$,
identifies the worst-case distribution $\mu_{\a(\x)}\in\A$ for every $\x\in\bS$.

\subsubsection{Mixture of Multivariate Gaussian distributions}

In the general case $\om=\R^p$, $\a=(\theta,\Sigma)$ with $\underline{\theta}\leq\theta\leq\overline{\theta}$, where
$\underline{\theta},\overline{\theta}\in\R^p$, and 
$\Sigma=\Sigma^T=(\sigma_{ij})\in\R^{p\times p}$, with $\underline{\delta}\,\mathbf{I}\preceq \Sigma\preceq \bar{\delta}\,\mathbf{I}$ and $\underline{\delta}>0$. That is,
\[d\mu_\a(\o)\,=\,\frac{1}{\sqrt{(2\pi)^p\,{\rm det}(\Sigma)}}\,\exp(-\frac{1}{2}(\o-\theta)\Sigma^{-1}(\o-\theta))\,d\o,\]

The measurability condition in Assumption \ref{ass-on-Ma}(i) follows from Fubini-Tonelli's theorem.
Assumption \ref{ass-on-Ma}(ii) is also satisfied. For instance, the fourth-order central moments read
(see e.g. {\tiny\verb+https://en.wikipedia.org/wiki/Multivariate_normal_distribution#Higher_moments+}):
\[\int (\o_i-\theta_i)^4\,d\mu_\a(\o)\,=\,3\,\sigma_{ii}^2;\:
\int (\o_i-\theta_i)^3(\o_j-\theta_j)\,d\mu_\a(\o)\,=\,3\,\sigma_{ii}\,\sigma_{ij}\]
\[\int (\o_i-\theta_i)^2(\o_j-\theta_j)^2\,d\mu_\a(\o)\,=\,\sigma_{ii}\,\sigma_{jj}+2\,\sigma_{ij}^2,\]
\[\int (\o_i-\theta_i)^2(\o_j-\theta_j)(\o_k-\theta_k)\,d\mu_\a(\o)\,=\,\sigma_{ii}\,\sigma_{jk}+2\,\sigma_{ij}\,\sigma_{ik},\]
\[\int (\o_i-\theta_i)(\o_j-\theta_j)(\o_k-\theta_k)(\o_\ell-\theta_\ell)\,d\mu_\a(\o)\,=\,\sigma_{ij}\,\sigma_{k\ell}+\sigma_{ik}\,\sigma_{j\ell}+
\sigma_{i\ell}\,\sigma_{jk},\]
and higher-order central moments are homogeneous polynomials in the entries of $\Sigma$. 
This immediately implies that non-central moments are polynomials in $(\sigma_{ij})$ and $\theta$. Assumption \ref{ass-on-Ma}(iii) is also straightforward as $\mu_\a$ has a density w.r.t. $d\o$, everywhere positive. Concerning Assumption \ref{ass-on-Ma}(iv), let $h$ be bounded continuous on $\X\times\om$.
With the change of variable $\u=\Sigma^{-1/2}(\o-\theta)$ one has
\[H(\x,\a):=\int_{\om} h(\x,\o)\,d\mu_\a(\o)\,=\,\frac{1}{\sqrt{(2\pi)^p}}\int_\om h(\x,\Sigma^{1/2}\,\u+\theta)\,\exp(-\frac{1}{2}\Vert\u\Vert^2)\,d\u,\]
and since $h$ is bounded and continuous, it follows that $H$ is continuous in $(\x,\a)\in\X\times\A$. Finally, Assumption \ref{ass-on-Ma-2} also holds.

\subsubsection{Mixture of exponential distributions}

In this case $\om=\R^p_+$, $\a=(a_1,\ldots,a_p)$ with $\a\in\A:=[\underline{\a},\overline{\a}]$, $\underline{\a}>0$, and
\[d\mu_\a(\o)\,=\,
\left(\prod_{i=1}^p \frac{1}{a_i}\right)\,\exp(-\displaystyle\sum_{i=1}^p\frac{\o_i}{a_i})\,d\o\,
=\,\otimes_{i=1}^p\,d\mu_{a_i}(\o_i)\]
with $d\mu_{a_i}(\o_i)=\frac{1}{a_i}\exp(-\o_i/a_i)\,d\o_i$, $i=1,\ldots,p$.

Again, the measurability condition in Assumption \ref{ass-on-Ma}(i) follows from Fubini-Tonelli's theorem.
Then for Assumption \ref{ass-on-Ma}(ii),
\[\int_\om \o^\beta\,d\mu_\a(\o)\,=\,\prod_{i=1}^p\left(\frac{1}{a_i}\int_{\R^+}\o_i^{\beta_i}\,\exp(\o_i/a_i)\,d\o_i\right)\,=\,\a^\beta\,\prod_{i=1}^p\beta_i{\rm !}\,\in\R[\a],\]
and Assumption \ref{ass-on-Ma}(iii) also holds. Like for the Gaussian, and after the change of variable
$u_i=\o_i/a_i$, $i=1,\ldots,p$, one shows easily that Assumption \ref{ass-on-Ma}(iv) holds. Finally
for Assumption \ref{ass-on-Ma-2},
\[\int_\om \exp(c\,\vert\o\vert)\,d\mu_\a(\o)\,=\,
\left(\prod_{i=1}^p \frac{1}{a_i}\right)\,\exp(-\displaystyle\sum_{i=1}^p\o_i(\frac{1}{a_i}-c))\,d\o\,<\,\infty,\]
whenever $c<1/a_i$ for all $i=1,\ldots,p$.

\subsubsection{Mixture of elliptical's}
Assumption \ref{ass-on-Ma}(ii) holds for Example \ref{ex-elliptical}. For instance 
with $\theta:\R_+\to\R_+$ such that $\int_\R t^k\theta(t^2)dt<\infty$ for all $k$, and $s=\int_\R\theta(t^2)dt$, let
\[d\mu_\a(\o)\,=\,\frac{1}{s\,\sigma}\theta((\o-a)^2/\sigma^2)\,d\o,\quad\a=(a,\sigma)\in\A.\]
Then
\[\int\o^jd\mu_\a(\o)\,=\,\frac{1}{s}\int_\R(\sigma t+a)^j \theta(t^2)\,dt\,=\,p_j(a,\sigma),\quad j=0,1,\ldots,\]
\subsubsection{Mixture of Poisson's}

Assumption \ref{ass-on-Ma}(ii) holds for Example \ref{ex-poisson}. For instance 
\[p_1(\a)\,=\,\int_\om \o\,d\mu_\a(\o)\,=\,a;\quad p_2(\a)=\int_\om \o^2\,d\mu_\a(\o)=a^2+a.\]

\subsubsection{Mixture of Binomial's}
Assumption \ref{ass-on-Ma}(ii) holds for Example \ref{ex-binomial}. For instance 
\[p_1(\a)\,=\,\int_\om \o\,d\mu_\a(\o)\,=\,N\,a;\quad p_2(\a)=\int_\om \o^2\,d\mu_\a(\o)=N\,a\,(1-a).\]


\end{document}